\documentclass[11pt]{amsart}
\usepackage[parfill]{parskip}    
\usepackage{fullpage}
\usepackage{graphicx}
\usepackage{amssymb,amsmath,latexsym}
\usepackage{tikz}
\usetikzlibrary{arrows,decorations.markings,decorations.pathreplacing,calc,matrix,intersections}
\tikzset{->-/.style={decoration={markings,mark=at position #1 with {\arrow{>}}},postaction={decorate}}}

\usepackage{epstopdf}
\usepackage{ifpdf}
\usepackage{color}
\usepackage{float}
\usepackage{amscd,amsmath}
\usepackage{amssymb,amsmath,latexsym,color,enumerate,tikz}
\usepackage[all]{xypic}       
\usepackage[varg]{pxfonts}
\usepackage{amsmath,amsthm,amsfonts,amssymb,fancyhdr,graphics,relsize,tikz-cd,mathtools,faktor,tikz,changepage}
\usepackage{graphicx}
\usepackage{placeins}

\usepackage{dsfont}
\DeclareGraphicsRule{.tif}{png}{.png}{`convert #1 `dirname #1`/`basename #1.tif`.png}

\definecolor{red}{rgb}{1,0,0} 

 \definecolor{darkgreen}{rgb}{0, .7, 0}

 \definecolor{purple}{rgb}{.7, 0, 1}



\tikzset{mynode/.style={draw,circle,fill=black,inner sep=2pt,outer sep=0.5pt}}


\newtheorem{theorem}{Theorem}[section]
\newtheorem*{theorem*}{Theorem}
\newtheorem*{cor*}{Corollary}

\newtheorem*{mainthm}{Theorem \ref{Main theorem}}

\newtheorem*{thmM1}{Theorem \ref{Theorem case 1}}
\newtheorem*{lemma*}{Lemma}
\newtheorem{proposition}[theorem]{Proposition}
\newtheorem{lemma}[theorem]{Lemma}
\newtheorem{corollary}[theorem]{Corollary}

\theoremstyle{definition}
\newtheorem{definition}[theorem]{Definition}

\newtheorem{question}[theorem]{Question}
\newtheorem{conj}[theorem]{Conjecture}

\theoremstyle{remark}
\newtheorem{remark}[theorem]{Remark}

\usepackage{hyperref}

\begin{document}
\title{Subgroups of the direct product of graphs of groups with free abelian vertex groups}
\author{Montserrat Casals-Ruiz, Jone Lopez de Gamiz Zearra}
\maketitle
\begin{abstract}
    A result of Baumslag and Roseblade states that a finitely presented subgroup of the direct product of two free groups is virtually a direct product of free groups. In this paper we generalise this result to the class of cyclic subgroup separable graphs of groups with free abelian vertex groups and cyclic edge groups. More precisely, we show that a finitely presented subgroup of the direct product of two groups in this class is virtually $H$-by-(free abelian), where $H$ is the direct product of two groups in the class. In particular, our result applies to 2-dimensional coherent right-angled Artin groups and residually finite tubular groups. Furthermore, we show that the multiple conjugacy problem and the membership problem are decidable for finitely presented subgroups of the direct product of two $2$-dimensional coherent RAAGs.
\end{abstract}

\section{Introduction}
Since the 60's it is well known that finitely generated subgroups of the direct product of non-abelian free groups are very complex and, in particular, most algorithmic problems such as the conjugacy, the isomorphism and the membership problem are undecidable (see \cite{Mihailova, Miller2}).

In \cite{Grun}, Grunewald showed that the subgroups with this complex behaviour are not finitely presented. Remarkably, in \cite{Roseblade} (see \cite{something1, Miller, Short} for alternative proofs), Baumslag and Roseblade clarified the situation and proved that finitely presented subgroups of the direct product of two free groups have a very tame structure - they are virtually the direct product of two free groups. The study of finitely presented subgroups of the direct product of arbitrarily many free groups (and more generally, limit groups over free groups) was conducted in a series of papers that culminated in \cite{Bridson3} where the authors prove that these subgroups also have a tame structure and that the main algorithmic problems are decidable. 

\emph{Right-angled Artin groups (RAAGs)} are defined by presentations where the relations are commutation of some pairs of generators and so it extends the class of (direct products of) finitely generated free groups. In view of the results about subgroups of the direct product of free groups, one may wonder if finitely presented subgroups of RAAGs have a tame structure and, in particular, if the main algorithmic problems are decidable in that class. Unfortunately, this is not the case as Bridson showed in \cite{Bridson4} that there is a right-angled Artin group $A$ and a finitely presented subgroup $S < A\times A$ for which the conjugacy and the membership problems are undecidable. 

This work is part of a series that aims at describing the structure of finitely presented subgroups of the direct product of (limit groups over) coherent RAAGs, that is, the structure of finitely presented residually coherent RAAGs, and at showing that the main algorithmic problems are decidable for this class. This programme was carried over for the subclass of RAAGs whose finitely generated subgroups are again RAAGs in \cite{Zearra}. 

In this paper, we begin studying the class of \emph{$2$-dimensional coherent RAAGs}. More precisely, we generalise Baumslag and Roseblade's result for free groups and we describe the structure of finitely presented subgroups of the direct product of two $2$-dimensional coherent RAAGs:

\begin{theorem*}
Let $S$ be a finitely presented subgroup of the direct product of two $2$-dimensional coherent RAAGs. Then, $S$ is virtually $H$-by-(free abelian), where $H$ is the direct product of two subgroups of $2$-dimensional coherent RAAGs.
\end{theorem*}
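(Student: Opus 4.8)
The plan is to deduce this statement directly from the general main theorem (Theorem \ref{Main theorem}) by showing that every $2$-dimensional coherent RAAG belongs to the class to which that theorem applies, namely the class of cyclic subgroup separable graphs of groups with free abelian vertex groups and cyclic edge groups. Thus there are two things to verify: first, that such a RAAG admits a splitting as a graph of groups of the required shape; and second, that it is cyclic subgroup separable. Once both are in place, the direct product of two such RAAGs is a direct product of two groups in the class, so the hypotheses of the main theorem are met, and its conclusion gives exactly the desired structure for the finitely presented subgroup $S$.

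To identify the relevant graphs of groups, I would first recall that a RAAG $A_\Gamma$ is $2$-dimensional precisely when $\Gamma$ is triangle-free (its maximal cliques are single edges), and, by Droms' characterisation, coherent precisely when $\Gamma$ is chordal. A triangle-free chordal graph has no cycles at all, so $\Gamma$ is a forest; hence the $2$-dimensional coherent RAAGs are exactly the RAAGs on forests. For such a group the required splitting is obtained by induction on the number of vertices via leaf-peeling: if $\ell$ is a leaf of a tree $\Gamma$ with unique neighbour $v$ and $\Gamma' = \Gamma\setminus\{\ell\}$, then $A_\Gamma = A_{\Gamma'} *_{\langle v\rangle} \langle v,\ell\rangle$, an amalgam of the free abelian group $\langle v,\ell\rangle \iso \Z^2$ with $A_{\Gamma'}$ over the cyclic group $\langle v\rangle \iso \Z$. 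Since $\langle v\rangle$ sits inside a vertex group of the inductively constructed splitting of $A_{\Gamma'}$, this enlarges that splitting to a graph of groups with free abelian vertex groups (of rank at most $2$) and infinite cyclic edge groups; a disconnected forest simply contributes a free product, i.e.\ trivial edge groups. This realises $A_\Gamma$ as a member of the class.

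It remains to check cyclic subgroup separability. Here it is important that we do \emph{not} claim full subgroup separability: by the theorem of Metaftsis and Raptis, $A_\Gamma$ is LERF only when $\Gamma$ avoids certain induced paths, and these already occur in forests, so RAAGs on forests need not be LERF. However, the much weaker property that every cyclic subgroup is closed in the profinite topology holds for all RAAGs, being a consequence of their residual finiteness together with their special (CAT(0) cube complex) structure, and this is all that the general theorem requires. With the splitting and cyclic subgroup separability established, I would conclude by applying Theorem \ref{Main theorem} to the direct product of two such RAAGs, obtaining that $S$ is virtually $H$-by-(free abelian) with $H$ a direct product of two subgroups of $2$-dimensional coherent RAAGs.

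The substantive content is carried entirely by the general theorem; within this specialisation the only genuinely delicate point is the verification of cyclic subgroup separability for RAAGs on forests, together with the bookkeeping that keeps all vertex groups free abelian and all edge groups cyclic throughout the leaf-peeling induction. The reduction to forests via Droms' theorem and the final invocation of Theorem \ref{Main theorem} are then routine.
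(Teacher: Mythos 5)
Your overall strategy coincides with the paper's: both reduce the statement to Theorem~\ref{Main theorem} by verifying that $2$-dimensional coherent RAAGs lie in the class $\mathcal{G}$ (hence in $\mathcal{A}$), and both begin with the same observation that triangle-free plus chordal forces the defining graph to be a forest. Where you diverge is in how membership in $\mathcal{G}$ is verified. The paper does not construct a splitting for each forest RAAG; it exhibits the explicit splitting of the single group $P_4$ as a tree of abelian groups, invokes Evans' theorem to get cyclic subgroup separability of $P_4$, then uses the Kim--Koberda embedding theorem (every $2$-dimensional coherent RAAG embeds in $P_4$) together with closure of $\mathcal{G}$ under subgroups. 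Your leaf-peeling induction is a genuinely different and arguably more self-contained route: it produces, for every forest RAAG, a splitting as a tree of free abelian groups of rank at most $2$ with cyclic edge groups (free products handling disconnected forests), and it avoids the embedding theorem entirely. The induction itself is correct, including the point that $\langle v\rangle$ lies in a vertex group of the splitting of $A_{\Gamma'}$ so that the amalgam refines the graph-of-groups structure.

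The gap is in your verification of cyclic subgroup separability. You assert that it ``holds for all RAAGs, being a consequence of their residual finiteness together with their special (CAT(0) cube complex) structure.'' As stated this is not a proof: residual finiteness emphatically does not imply cyclic subgroup separability (the solvable Baumslag--Solitar groups $BS(1,n)$, $|n|\geq 2$, are residually finite but not cyclic subgroup separable -- this is exactly why the paper excludes non-unimodular GBS groups from its class), and the passage from ``virtually special'' to separability of cyclic subgroups is itself a substantive theorem (cyclic subgroups of virtually special groups are virtual retracts, in the sense of Haglund and of Minasyan) which you neither cite nor prove. While the claim you need is in fact true, the clean repair is much closer at hand and makes your argument fully rigorous: the splitting you just constructed is a finitely generated graph of groups whose underlying graph is a tree, with free abelian vertex groups and cyclic edge groups, so Evans' theorem (\cite[Theorem 3.4]{Evans}, already quoted in the paper) gives cyclic subgroup separability for each tree component directly, and \cite[Lemma 2.3]{Evans} handles the free product over the components of the forest. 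With that substitution your proof is correct and bypasses both the Kim--Koberda embedding and any general claim about arbitrary RAAGs.
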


Furthermore, we show that these finitely presented subgroups have a good algorithmic behaviour. Namely, we proof the following:
 
\begin{cor*}
Finitely presented subgroups of the direct product of two $2$-dimensional coherent RAAGs have decidable multiple conjugacy problem and membership problem.
\end{cor*}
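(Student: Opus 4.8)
The plan is to derive both decidability statements from the structural theorem, reducing each algorithmic problem for $S$ to tractable problems in the building blocks the decomposition provides. By that theorem there is a finite-index subgroup $S_0 \le S$ sitting in a short exact sequence $1 \to H \to S_0 \to \Z^n \to 1$, where $H = H_1 \times H_2$ and each $H_i$ is a finitely generated subgroup of a $2$-dimensional coherent RAAG $G_i$. The first task is to record that this data — the subgroup $S_0$, the kernel $H$, and the projection to $\Z^n$ — can be produced effectively from a finite presentation of $S$, and to collect the base algorithmic facts about the pieces: since each $H_i$ (and each $G_i$) is the fundamental group of a graph of groups with free abelian vertex groups and cyclic edge groups, I would use Bass--Serre theory for the vertex/edge data (or cite the properties established earlier) to get solvable word and conjugacy problems, computable centralisers, and decidable membership for finitely generated subgroups.

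For the multiple conjugacy problem I would first pass to $S_0$: simultaneous conjugacy transfers across a finite-index inclusion once coset representatives and membership in $S_0$ are computable, so it suffices to treat $S_0$. In the direct product $H = H_1 \times H_2$ the problem splits coordinatewise and reduces, by the standard elimination over the tuple, to solving conjugacy in each $H_i$ and computing the centralisers that parametrise the conjugators; because centralisers of elements of the $H_i$ are again computable and lie in the same manageable class, this induction terminates. The remaining step is to lift multiple conjugacy from $H$ through the extension by $\Z^n$, and here I would invoke a Bogopolski--Martino--Ventura--type criterion: conjugacy in $S_0$ is decidable provided the action of $\Z^n$ on conjugacy classes of $H$ is orbit-decidable and twisted conjugacy in $H$ is solvable.

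For the membership problem — deciding, for $g$ in the ambient product $G_1 \times G_2$, whether $g \in S$ — I would exploit the fibre-product structure. After projecting and using decidable membership for finitely generated subgroups of each $G_i$, one reduces to the case where $S$ is subdirect, so that $L_i := S \cap G_i$ is normal in $G_i$ and, by Goursat's lemma, $S$ is exactly the fibre product of $\phi_1 \colon G_1 \to Q$ and $\phi_2 \colon G_2 \to Q$ over the common quotient $Q := G_1/L_1 \cong G_2/L_2$. Membership of $(g_1,g_2)$ then amounts to testing $\phi_1(g_1) = \phi_2(g_2)$ in $Q$, which is decidable once $Q$ has solvable word problem and the $\phi_i$ are computable. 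The essential input is again the structural theorem: it forces $Q$ to be virtually $\Z^n$, hence finitely presented with solvable word problem — this is precisely what rules out the Mihailova obstruction, which arises only when $S$ fails to be finitely presented.

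The step I expect to be the main obstacle is twofold. First, the effectivization running through both arguments: extracting $S_0$, $H$, the map to $\Z^n$, and the common quotient $Q$ algorithmically from a presentation of $S$, and certifying that $Q$ is virtually abelian. Second, and deeper, is the verification of orbit-decidability and solvable twisted conjugacy for the explicit $\Z^n$-action on $H_1 \times H_2$, since conjugacy — unlike the word problem — is not automatically inherited by group extensions and demands exactly this extra control.
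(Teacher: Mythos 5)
Your plan leaves unproved precisely the steps the paper is designed to avoid, and one step is false as stated. For the multiple conjugacy problem: (i) your claim that simultaneous conjugacy ``transfers across a finite-index inclusion once coset representatives and membership are computable'' is not true in general --- decidability of (multiple) conjugacy does \emph{not} pass from a finite-index subgroup to the ambient group; the paper points this out explicitly and needs the extra hypothesis of unique roots, invoking \cite[Lemma 7.2]{Bridson3} for exactly this transfer. (ii) Your route through the abstract extension $1 \to H \to S_0 \to \mathbb{Z}^n \to 1$ via a Bogopolski--Martino--Ventura criterion requires orbit decidability of the $\mathbb{Z}^n$-action and solvable twisted conjugacy in $H$; you flag this as the main obstacle but do not resolve it, and the structure theorem gives you no computable data for that action. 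Worse, the kernel $H = L_1 \times L_2$ need not be finitely generated (this is exactly what the Bestvina--Brady-type examples in the paper show), so ``solving conjugacy coordinatewise and computing centralisers in each factor'' is not even a well-posed algorithmic problem. The paper's proof takes a different frame that bypasses all of this: it never solves conjugacy inside the extension, but inside the ambient group. Since $G_1\times G_2$ is CAT(0), hence bicombable, and the structure theorem yields a finite-index subgroup $G < G_1\times G_2$ in which $L_1\times L_2$ is normal with abelian quotient, the criterion of \cite[Proposition 7.1]{Bridson3} (a subgroup of a bicombable group containing a subgroup normal in the ambient group with nilpotent quotient has solvable multiple conjugacy) applies directly to $S\cap G < G$, and unique roots of subgroups of RAAGs then lifts the solution from $S \cap G$ to $S$.

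For membership, your Goursat/fibre-product reduction is correct in principle but hinges on effectively computing the common quotient $Q$, the maps $\phi_i$, and a certificate that $Q$ is virtually abelian --- again the step you yourself identify as unresolved. The paper shows this effectivization is unnecessary: knowing only non-constructively that $Q = D \slash (L_1\times L_2)$ is virtually free abelian, hence subgroup separable, and that $L_1 \times L_2 = \ker\phi \subseteq H$ (so $\phi(g)\in\phi(H)$ if and only if $g \in H$), one runs two semi-decision procedures in parallel: an enumeration of finite quotients of $D$, which halts if $g \notin H$, and an enumeration of words in the generators of $H$, which halts if $g \in H$. The only effective inputs are the word problem, the Kapovich--Weidmann--Myasnikov uniform membership and presentation algorithm for the class $\mathcal G$ (the paper's Lemma~\ref{Lemma M}), and the effective construction of the embedding of $S$ into a product of two $2$-dimensional coherent RAAGs from \cite{Ilya}. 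So as it stands your proposal is a program rather than a proof: completing it along your lines would require establishing orbit decidability, twisted conjugacy, and the effectivity claims, all of which remain open in your framework and all of which the paper's choice of tools is engineered to sidestep.
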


This corollary shows that Bridson's example of a right-angled Artin group $A$ and an algorithmically bad finitely presented subgroup of $A\times A$ is not $2$-dimensional coherent (and we conjecture that $A$ cannot be coherent).

In fact, our results apply to a wider class of groups, the class $\mathcal{A}$, which is the $Z \ast$- closure (see Section~\ref{Section 1} for the definition) of the class of cyclic subgroup separable graphs of groups with free abelian vertex groups and cyclic edge groups. This class contains $2$-dimensional coherent RAAGs and residually finite tubular groups among others. Recall that a \emph{tubular group} is a finitely generated graph of groups with $\mathbb{Z}^2$ vertex groups and $\mathbb{Z}$ edge groups.

\begin{mainthm}
Let $S$ be a finitely presented subgroup of $G_{1}\times G_{2}$ where $G_{1}, G_{2} \in \mathcal{A}$. Then, $S$ is virtually $H$-by-(free abelian), where $H$ is the direct product of two groups in $\mathcal{A}$.

Furthermore, $S$ is virtually the kernel of a homomorphism $f$ from $H_1\times H_2$ to $\mathbb R$, for some $H_1, H_2\in \mathcal A$. More precisely, $H$ is equal to $L_1 \times L_2$, where $L_i = S\cap G_i$, $i\in \{1,2\}$ and either
\begin{itemize}
    \item $L_1 \times L_2 <_{fi} S <_{fi} G_1 \times G_2$, or
    \item $S$ is virtually the kernel $\ker f$ where $f\colon H_1 \times H_2 \mapsto \mathbb Z$ for some $H_i \in \mathcal A$, $i\in \{1,2\}$, or
    \item $S$ is virtually the kernel $\ker f$ where $f\colon H_1 \times H_2 \mapsto \mathbb{Z}^2$ for some $H_i\in \mathcal A$, $i\in \{1,2\}$. In this case, $L_i$ is the free product of finitely generated groups in $\mathcal A$ for $i\in \{1,2\}$.
\end{itemize}
\end{mainthm}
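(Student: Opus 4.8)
The plan is to run the Baumslag--Roseblade template---pass to the subdirect situation and study the common quotient through the fibre-product structure---but to replace their free-group arguments by the Bass--Serre geometry of $\mathcal{A}$ together with Bieri--Neumann--Strebel--Renz finiteness invariants, which is where both the bound on the rank and the homomorphism to $\mathbb{R}$ will come from.

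\textbf{Reduction and the fibre-product picture.} First I would reduce to the subdirect case: writing $p_i\colon S\to G_i$ for the projections and $S_i=p_i(S)$, finite generation of $S$ makes each $S_i$ a finitely generated subgroup of $G_i$, hence again a member of $\mathcal{A}$ (a finitely generated group acting on the Bass--Serre tree of $G_i$ inherits free abelian vertex and cyclic edge stabilisers, and cyclic subgroup separability passes to subgroups); replacing $G_i$ by $S_i$ I may assume $p_i(S)=G_i$. Setting $L_i=S\cap G_i$ and $Q=G_i/L_i$, the group $S$ is the fibre product of the two quotient maps $G_i\twoheadrightarrow Q$, and $L_1\times L_2\trianglelefteq S$ with $S/(L_1\times L_2)\iso Q$. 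The goal is then to produce groups $H_1,H_2\in\mathcal{A}$, a free abelian group of rank $k\le 2$, and surjections $\chi_i\colon H_i\twoheadrightarrow \mathbb{Z}^k$ so that $S$ is virtually the fibre product $\{(h_1,h_2):\chi_1(h_1)=\chi_2(h_2)\}=\ker(\chi_1-\chi_2)$, after which $f=\chi_1-\chi_2\colon H_1\times H_2\to\mathbb{Z}^k\incl\mathbb{R}$ is the required homomorphism and the $(\ker\chi_1\times\ker\chi_2)$-by-$\mathbb{Z}^k$ structure is immediate.

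\textbf{Finiteness properties and the structural input.} I would combine two ingredients. First, the consequences of finite presentability of the fibre product $S$: through the circle of ideas around the $1$--$2$--$3$ theorem for subdirect products of two factors, being $FP_2$ constrains the finiteness properties of the maps $G_i\twoheadrightarrow Q$ and, dually, of the relevant kernels. Second---and this is where $\mathcal{A}$ enters essentially---I would analyse these maps through the action of $G_i$ on its Bass--Serre tree $T_i$: a character $\chi\colon G_i\to\mathbb{Z}^k$ whose kernel has good finiteness behaviour is controlled by how $\chi$ restricts to the free abelian vertex groups and the cyclic edge groups along the tree, and from this one extracts the homomorphisms $\chi_i$ and the re-realisation of $S$ as a fibre product over $\mathbb{Z}^k$. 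This re-realisation, rather than the naive $Q=G_i/L_i$ (which need \emph{not} be abelian, as the example of a fibre product over $F_2$ inside $(F_2\times\mathbb{Z})\times(F_2\times\mathbb{Z})$ shows), is what makes the free abelian quotient appear.

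\textbf{The rank bound and the trichotomy (the main obstacle).} \emph{The crux, and the step I expect to be hardest, is the bound $k\le 2$.} It cannot rest on cohomological dimension, since $\mathcal{A}$ contains groups of arbitrarily large dimension (e.g.\ $\mathbb{Z}^n$) and free groups already admit free abelian quotients of every rank. Instead it must exploit the cyclic (rank one) edge groups together with the $FP_2$ constraint from finite presentability of $S$: via the Bieri--Neumann--Strebel--Renz criterion, finite presentability of $\ker f$ translates into a subsphere of the character sphere of $H_1\times H_2$ lying in $\Sigma^2$, and the interplay of the one-dimensional edge groups with the two-factor fibre-product structure is what bounds the dimension of such a subsphere, and hence $k$, by $2$; making this precise is the heart of the proof. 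Granting the bound, the three cases correspond to $k\in\{0,1,2\}$: for $k=0$ the common quotient is finite and $L_1\times L_2<_{fi}S<_{fi}G_1\times G_2$; for $k=1$ one gets $f\colon H_1\times H_2\to\mathbb{Z}$ with $\ker\chi_i\in\mathcal{A}$; and for $k=2$ a cyclic edge group cannot fill two independent directions, so $\chi_i$ must be injective on edge stabilisers, whence $\ker\chi_i$ meets every edge stabiliser trivially and Bass--Serre theory presents it as a free product of finitely generated groups in $\mathcal{A}$ (reminiscent of the Bieri--Stallings kernels). In every case $S$ is virtually the kernel of a homomorphism $H_1\times H_2\to\mathbb{R}$ of image-rank at most $2$, and cyclic subgroup separability is used throughout to pass to the finite-index subgroups on which these quotients become honest free abelian groups.
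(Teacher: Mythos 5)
Your reduction to the subdirect case and your identification of the crux (the common quotient must be shown virtually free abelian of rank at most $2$) are both correct, and your derivation of the trichotomy \emph{granting} that fact matches the paper's endgame (including the observation that in the rank-$2$ case $L_i$ meets every edge stabiliser trivially and hence splits as a free product). But the proposal has a genuine gap exactly where you say it does: you explicitly defer the rank bound and the virtual abelianness of the quotient (``making this precise is the heart of the proof''), and the BNSR route you sketch for it does not obviously close. There is a circularity problem: $\Sigma$-invariants constrain kernels of characters $\chi\colon H_1\times H_2\to\mathbb{R}$, so before invoking them you must already know that $S$ is virtually the kernel of such a character, i.e.\ that $Q=G_i/L_i$ is virtually abelian -- but a priori $Q$ is just an arbitrary quotient of a group in $\mathcal{G}$, and nothing in the fibre-product formalism makes it abelian (Bridson's example of a bad finitely presented subgroup of $A\times A$ for a RAAG $A$ shows this step must genuinely use cyclic edge groups and cyclic subgroup separability). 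Even granting a character, bounding $k$ by $2$ via ``subspheres lying in $\Sigma^2$'' would require computing $\Sigma^2$ of the groups in $\mathcal{A}$, which you do not do and which is nowhere available; note also that in the paper the bound $2$ does not come from the two-factor structure at all, but from an explicit coset argument producing two elements.

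For comparison, the paper's actual engine is elementary but quite different. First, finite presentability of $S$ is exploited through HNN decompositions rather than through the $1$--$2$--$3$/BNSR circle: cyclic subgroup separability plus \cite[Theorem 3.1]{Bridson2} give a finite-index subgroup of $G_i$ that is an HNN extension over a hyperbolic element of $L_1$, and Miller's lemma \cite[Lemma 2]{Miller} then shows that a cyclic extension $\langle L_2,e\rangle$ of $L_2$ is finitely generated (Proposition~\ref{Proposition 1}). Second, since $\langle L_2,e\rangle$ acts minimally on the Bass--Serre tree with finite quotient graph \cite[Proposition 7.9]{Bass}, one obtains a finite double-coset covering $G_{2}= \dot{\bigcup}_{j} \langle L_{2},e \rangle z_{j}\Gamma_{e}$. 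Third, a case analysis on whether powers of vertex elements agree with powers of edge elements modulo $L_2$ either produces two elements $a^\prime, a^{\prime\prime}$ with $\langle L_2, a^\prime, a^{\prime\prime}\rangle$ of finite index in $G_2$ (this is where the rank bound $2$ really comes from), or lifts the coset covering to a Baumslag--Solitar group $BS(m,n)$, where Lemma~\ref{Lemma BS} (the computation $H_1(\langle\langle x\rangle\rangle;\mathbb{Z})\cong\mathbb{Z}\left[\frac{1}{mn}\right]$) together with the covering forces either $|m|=|n|$ or a power of the edge generator into $L_2$, yielding a virtually cyclic quotient. None of this machinery, nor any substitute capable of proving the key step, appears in your proposal, so as it stands it is an outline of a plausible alternative strategy rather than a proof.
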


In \cite{Bestvina}, Bestvina and Brady examined the finiteness properties of the kernels of homomorphisms from RAAGs to $\mathbb Z$ using Morse theory. As a consequence of our main result, we obtain that the finitely presented subgroup $S$ is virtually a kernel of a homomorphism from the direct product of two groups in our class to $\mathbb Z^n$ and so we expect that Morse theory can also be useful to further study the finiteness properties of these subgroups.

The abelian factor in the description of $S$ is directly related to the edge groups of the decomposition of the groups in our class. In particular, if we consider graphs of groups in $\mathcal A$ with trivial edge groups, we deduce the following theorem and recover the result of Baumslag and Roseblade for direct products of free groups:

\begin{thmM1}
Let $\mathcal{A}^\prime$ be the subclass of $\mathcal{A}$ containing the groups which have a non-trivial free product decomposition and let $S$ be a finitely presented subgroup of the direct product of two groups in the class $\mathcal{A}^\prime$. Then, $S$ is virtually the direct product of two subgroups in $\mathcal{A}^\prime$.
\end{thmM1}

We require the graphs of groups to be cyclic subgroup separable. Recall that a group $G$ is \emph{cyclic subgroup separable} if for each cyclic subgroup $C <G$ and each $h\in G \setminus C$, there is a finite quotient $Q$ of $G$ and a homomorphism $\pi \colon G \mapsto Q$ such that $\pi(h) \notin \pi(C)$. Therefore, cyclic subgroup separability is a residual property that generalises residual finiteness. Although cyclic subgroup separability is a property of algebraic nature, we believe that it carries consequences on the geometry of the groups in the class $\mathcal A$. Indeed, we believe that the groups in the class $\mathcal A$ are CAT(0) and in fact we conjecture that they are virtually compact special. More precisely, we propose the following:

\begin{conj}
Let $G$ be a finitely generated graph of groups with free abelian vertex groups and with cyclic edge groups. Then, the following are equivalent:

\begin{itemize}
    \item $G$ is cyclic subgroup separable;
    \item $G$ has a finite index subgroup with isolated edge groups; \footnote{In \cite{Hoda}, the authors use the terminology virtually primitive.}
    \item $G$ is virtually compact special.
\end{itemize} 
Furthermore, if $G$ is freely indecomposable, then $G$ is cyclic subgroup separable if and only if it is residually finite and non-solvable.
\end{conj}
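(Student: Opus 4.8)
The plan is to establish the three-way equivalence as a cycle of implications, exploiting the fact that the two outer conditions (cyclic subgroup separability and virtual compact specialness) are respectively algebraic and geometric, while the middle condition — a finite-index subgroup with isolated (equivalently, virtually primitive, cf. \cite{Hoda}) edge groups — is the combinatorial bridge between them. Writing (i) for cyclic subgroup separability, (ii) for the existence of a finite-index subgroup with isolated edge groups, and (iii) for virtual compact specialness, I would prove $(iii)\Rightarrow(i)\Rightarrow(ii)\Rightarrow(iii)$. Of these, $(iii)\Rightarrow(i)$ is essentially formal, $(i)\Rightarrow(ii)$ is a separability-obstruction argument, and $(ii)\Rightarrow(iii)$ is the genuinely hard geometric step, which I expect to be the main obstacle.

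For $(iii)\Rightarrow(i)$, a (virtually) compact special group embeds, after passing to finite index, as a subgroup of a RAAG by Haglund--Wise, and cyclic subgroups are separable there; since cyclic subgroup separability is inherited by finite-index overgroups (the profinite topology of $G$ induces that of a finite-index subgroup, so the relevant cyclic subgroups and their cosets remain closed), $G$ is cyclic subgroup separable. This direction uses no structural input about the graph of groups. For $(i)\Rightarrow(ii)$ I would argue contrapositively: if no finite-index subgroup of $G$ has isolated edge groups, then after passing to any finite cover there still remains an edge whose cyclic edge group includes into an adjacent free abelian vertex group $\mathbb{Z}^n$ as a non-primitive subgroup $\langle a^k\rangle$ with $k\ge 2$. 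The local model is an ascending relation $t a t^{-1}=a^{k}$, whose archetype is $BS(1,k)$: here $b:=t^{-1}at$ satisfies $b^{k}=a$, and in every finite quotient the conjugate images of $a$ and $a^{k}$ must have equal order, forcing $\gcd(k,\operatorname{ord}\phi(a))=1$ and hence $\phi(b)\in\phi(\langle a\rangle)$, so $b$ lies in the profinite closure of $\langle a\rangle$ while $b\notin\langle a\rangle$. The technical content is to promote this local failure — which is stable under finite covers precisely because primitivity cannot be achieved virtually — to a genuine pair $(C,h)$ with $h$ in the profinite closure of a cyclic subgroup $C$ but $h\notin C$, tracking via Bass--Serre theory how the edge and vertex groups transform under finite-index subgroups.

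The main obstacle is $(ii)\Rightarrow(iii)$. Here I would extend Wise's analysis of tubular groups (vertex groups $\mathbb{Z}^2$, edge groups $\mathbb{Z}$) to arbitrary free abelian vertex groups. When every edge group is primitive in its incident vertex groups, each splitting $\mathbb{Z}^n=\mathbb{Z}\times\mathbb{Z}^{n-1}$ can be chosen compatibly with the edge inclusions, producing codimension-one subgroups and a wall structure; Sageev's construction then yields an action of a finite-index subgroup of $G$ on a CAT(0) cube complex. The serious work is twofold. First, one must guarantee that this action is proper and cocompact; for tubular groups this is exactly Wise's ``equitable set'' criterion, studied further by Hoda \cite{Hoda}, and it is not clear which primitivity-type hypothesis delivers cocompactness in higher rank. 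Second, one must arrange that the hyperplanes satisfy the Haglund--Wise conditions (no self-intersection, self-osculation, one-sidedness, or inter-osculation), after which Agol's theorem, or Wise's special quotient theorem, upgrades the action to virtual specialness. I expect that verifying the osculation conditions directly from the primitivity of the edge inclusions is where the principal difficulty lies.

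Finally, for the addendum I would use $(i)\Leftrightarrow(ii)$ to reduce cyclic subgroup separability of a freely indecomposable $G$ to the existence of a finite-index subgroup with isolated edge groups, and then show this fails exactly for the (virtually) ascending Baumslag--Solitar degenerations such as $BS(1,k)$, which are solvable and, by the computation above, residually finite but not cyclic subgroup separable; conversely, residual finiteness together with the absence of such ascending/solvable behaviour should force the primitivity condition virtually. One delicate point to reconcile against the precise conventions of Section~\ref{Section 1} is the virtually abelian boundary case (e.g. the Klein bottle group, whose edge inclusion $a\mapsto a^{-1}$ is already primitive): such groups are simultaneously solvable, cyclic subgroup separable, and virtually special, so ``non-solvable'' in the statement must be understood as excluding the ascending Baumslag--Solitar degenerations rather than all solvable groups, and the formulation should be calibrated accordingly.
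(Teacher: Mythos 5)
This statement is a \emph{conjecture} in the paper: the authors give no proof, only a survey of supporting evidence (\cite[Corollary 7.7]{Levitt} for Generalised Baumslag--Solitar groups, \cite{Hoda} for tubular groups, \cite[Theorem 3.4]{Evans} for trees, and \cite{GKim} for the one-loop case and for sufficiency of isolated edge groups). So there is no proof of the paper's to compare yours against; the only question is whether your outline settles the conjecture, and it does not. The implication you yourself call the main obstacle, (ii) $\Rightarrow$ (iii), is left entirely open: you identify no criterion that makes the Sageev action proper and cocompact for vertex groups of rank greater than $2$, and no verification of the Haglund--Wise osculation conditions. This is not a presentational gap but the open core of the conjecture: even for tubular groups the implication is not established in the literature the paper cites (Button proves virtual specialness only when the underlying graph is a tree, and cubulations of tubular groups arising from equitable sets need not be cocompact), which is why the paper says it ``expects'' these groups to be virtually special rather than asserting it.

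Two further points. First, your sketch of (i) $\Rightarrow$ (ii) rests on an incorrect local model: a non-primitive edge inclusion does not produce an ascending relation $tat^{-1}=a^{k}$, i.e.\ a copy of $BS(1,k)$. Non-primitivity concerns the index of the edge group in its vertex group and is perfectly compatible with cyclic subgroup separability: the Klein bottle group $\langle a\rangle\ast_{a^{2}=b^{2}}\langle b\rangle$ has non-isolated edge groups, yet it is virtually $\mathbb{Z}^{2}$, hence cyclic subgroup separable and virtually primitive. The genuine obstruction is non-unimodularity, i.e.\ a relation $ta^{m}t^{-1}=a^{n}$ with $|m|\neq|n|$; this is exactly how the paper invokes \cite[Corollary 7.7]{Levitt} inside the proof of Lemma~\ref{Lemma WPD}. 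Your contrapositive therefore has to convert the hypothesis that \emph{no} finite-index subgroup has isolated edge groups into such a non-unimodular configuration (or directly into a non-separable pair $(C,h)$), and your sketch contains no mechanism for that step --- this is where all the Bass--Serre work would have to happen. Second, your observation about the addendum is correct and worth sharpening: read literally, ``cyclic subgroup separable iff residually finite and non-solvable'' fails for $\mathbb{Z}^{n}$ and for the Klein bottle group, which are freely indecomposable, cyclic subgroup separable, residually finite and solvable; the addendum must exclude virtually abelian groups, as Levitt's theorem implicitly does for GBS groups. But recalibrating the statement is an amendment to the conjecture, not a proof of it. In sum, of your three implications only (iii) $\Rightarrow$ (i) is essentially complete (modulo a citation that cyclic subgroups of virtually compact special groups are separable); the conjecture remains open.
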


Recall that a subgroup $H<G$ is \emph{isolated} if, for each $g\in G$ and each $n\in \mathbb N\setminus \{0\}$, $g^n \in H$ implies $g\in H$. 

We review some of the results that support the conjecture. In the case of \emph{Generalised Baumslag-Solitar groups}, we have that they are cyclic subgroup separable if and only if they are unimodular if and only if they have a finite index subgroup with isolated edge groups if and only if they are non-solvable and residually finite (see \cite[Corollary 7.7]{Levitt}). Unimodular Generalised Baumslag-Solitar groups are precisely the virtually compact special ones. Therefore, the conjecture holds for Generalised Baumslag-Solitar groups.

In \cite{Hoda}, the authors prove that for tubular groups the properties of being cyclic subgroup separable, of being residually finite and of having a finite index subgroup with isolated edge groups are equivalent. Furthermore, in \cite{Button}, Button proves that a tubular group whose underlying graph is a tree is virtually special.

For the general case, any finitely generated graph of groups such that the underlying graph is a tree, with free abelian vertex groups and with cyclic edge groups is cyclic subgroup separable (see \cite[Theorem 3.4]{Evans}). As in the case of tubular groups, we expect these groups to be virtually special.

If the underlying graph has only one loop, that is, if $G= \langle T, s \mid a^s = b\rangle$ and $T$ is a tree of free abelian vertex groups and cyclic edge groups, then the group $G$ is cyclic subgroup separable if and only if it has a finite index subgroup which is a graph of groups with isolated edge groups. Furthermore, the later is equivalent to either $\langle a \rangle \cap \langle b \rangle = \{1\}$ or $a=c^k$ and $b=c^{\pm k}$ for some $c\in G$ and $k\in \mathbb{Z}$ (see \cite[Corollay 3.5]{GKim}).

In addition, for graphs of groups with free abelian vertex groups and cyclic edge groups, having isolated edge groups is a sufficient condition in order to be cyclic subgroup separable (see \cite[Theorem 3.7]{GKim}).

As mentioned above, non-unimodular (Generalised) Baumslag-Solitar groups are not cyclic subgroup separable, and so they do not belong to our class. We therefore ask the following:

\begin{question}
What is the structure of finitely presented subgroups of the direct product of two non-unimodular (Generalised) Baumslag-Solitar groups?
\end{question}

Our results apply to $2$-dimensional coherent RAAGs but we conjecture that they should extend to all coherent RAAGs and, more generally, to cyclic subgroup separable graphs of groups with free abelian vertex groups. Interestingly enough, in order to generalise the results from $2$-dimensional coherent RAAGs to all coherent RAAGs, it would suffice to prove a generalised version of It\^o's theorem (see \cite{Ito}). It\^o's theorem states that if a group $Q$ is the product $AB$ of two abelian subgroups $A$ and $B$, then $Q$ is metabelian. In our case, we would need to generalise this result to finitely many cosets of a product of two abelian groups, namely, if
$$Q= A_{1}A_{2} \dot{\cup} A_{1}A_{2}a_{1} \dot{\cup} \cdots \dot{\cup} A_{1}A_{2} a_{r},
$$
where $A_{1}$ and $A_{2}$ are abelian subgroups, then one would like to conclude that the group $Q$ is virtually metabelian. We did not succeed in proving this general statement and so we formulate it as a question:

\begin{question}
Can It\^o's theorem be extended in the presence of cosets? That is, if $Q$ is a group such that
$$Q= A_{1}A_{2} \dot{\cup} A_{1}A_{2}a_{1} \dot{\cup} \cdots \dot{\cup} A_{1}A_{2} a_{r},
$$ 
where $A_{1}$ and $A_{2}$ are abelian subgroups, is $Q$ virtually metabelian?
\end{question}

When one restricts to groups in our class, that is with cyclic edge groups, the abelian subgroups $A_{1}$ and $A_{2}$ in the decomposition above are actually cyclic groups. This allows us to further reduce the problem to the case when a Baumslag-Solitar group admits such a decomposition, that is when a Baumslag-Solitar group can be covered by finitely many cosets of the product of two cyclic subgroups. In this case, we use the structure of Baumslag-Solitar groups to show that this is only possible when the Baumslag-Solitar group is abelian.

We also notice that for graphs of groups with free abelian edge groups the condition on cyclic subgroup separability is more subtle than in the case of cyclic edge groups as there are examples of amalgamated free products of free abelian groups that are not residually finite (see \cite[Remark 11.6]{Leary}).

In relation to algorithmic problems, it is known that the isomorphism problem for finitely presented subgroups of the direct product of two free groups is decidable (see \cite{Bridson3}), although for finitely presented subgroups of the direct product of finitely many free groups it is still open. In the case of $2$-dimensional coherent RAAGs, we formulate it as a question:

\begin{question}
Is the isomorphism problem for finitely presented subgroups of the direct product of two $2$-dimensional coherent RAAGs decidable?
\end{question}

The paper is organized as follows. In Section~\ref{Section 1}, we introduce the class of groups that we will study and describe some properties of these groups. 

In Section~\ref{Section 2}, we review Miller's proof for free groups. The idea is to first show that if $S$ is a finitely presented subgroup of the direct product of two free groups $F_{1}$ and $F_{2}$, then the subgroups $L_{i}= S \cap F_{i}$ are finitely generated, and after that use the fact that non-trivial finitely generated  normal subgroups of free groups have finite index to conclude that the direct product $L_{1}\times L_{2}$ has finite index in $S$.

When considering a finitely presented subgroup $S$ of the direct product of two $2$-dimensional coherent RAAGs, say $G_{1}$ and $G_{2}$, the intersections $S\cap G_{1}$ and $S \cap G_{2}$ are not necessarily finitely generated. Furthermore, non-trivial finitely generated normal subgroups of coherent RAAGs do not need to be of finite index. Indeed, coherent RAAGs fiber, that is, they admit non-trivial epimorphisms onto $\mathbb{Z}$ with finitely generated kernel.

We address these issues in Section~\ref{Section 3}. We show that although the subgroup $L_{i}$ may not be finitely generated, a cyclic extension of $L_{i}$ is (see Proposition~\ref{Proposition 1}). We also characterise finitely generated normal subgroups $N$ of a group $G$ in $\mathcal{A}$: either $N$ is in the center of the group $G$ or $G \slash N$ is virtually abelian (see Proposition~\ref{Proposition 0}).

Finally, in Section~\ref{Section 4}, the main result is proved. We show that the quotient $Q=G_{i} \slash L_{i}$ is covered by finitely may cosets of the product of two cyclic subgroups and that this covering lifts to a covering of a Baumslag-Solitar group. We use the structure of Baumslag-Solitar groups to deduce that it is free abelian and conclude that $Q$ is virtually free abelian.

\section{The class of groups $\mathcal A$}
\label{Section 1}

Given a class of groups, there is a natural way to construct other groups using operations such as taking free products or adding center. 

\begin{definition}
Let $\mathcal{C}$ be a class of groups. The \emph{$Z \ast$-closure of $\mathcal{C}$}, denoted by $Z \ast (\mathcal{C})$, is the union of classes $(Z \ast (\mathcal{C}))_{k}$ defined recursively as follows. At level $0$, the class $(Z \ast (\mathcal{C}))_{0}$ is the class $\mathcal{C}$. A group $G$ lies in $(Z \ast (\mathcal{C}))_{k}$ for $k\geq 1$ if and only if
\[ G \cong \mathbb{Z}^m \times (G_{1}\ast \cdots \ast G_{n}),\] where $m\in \mathbb{N}\cup \{0\}$ and the groups $G_{i}$ belong to $(Z \ast (\mathcal{C}))_{k-1}$ for all $i\in \{1,\dots,n\}$.
\end{definition}

If the class $\mathcal C$ is $\mathbb{Z} $, then Droms proved in \cite{Droms} that the $Z \ast$-closure of $\mathbb Z$ is precisely the family of RAAGs with the property that all their finitely generated subgroups are again RAAGs. 

The class of groups $\mathcal A$ is defined as the $Z \ast$-closure of the class of groups $\mathcal G$, defined as follows.

\begin{definition}[Class $\mathcal A$ and class $\mathcal G$]
Let $\mathcal{G}$ be the class of cyclic subgroup separable graphs of groups with free abelian vertex groups and cyclic edge groups. 

The class of groups $\mathcal A$ is the $Z \ast$-closure of the class $\mathcal G$.
\end{definition}

\begin{definition}
Let $G$ be a group in the class $\mathcal{G}$. Any splitting of $G$ as a graph of groups with free abelian vertex groups and cyclic edge groups is called a \emph{standard splitting} of $G$.
\end{definition}

Notice that cyclic subgroup separability extends from the class $\mathcal G$ to the class $\mathcal A$; that is, if $G$ is a group in $\mathcal{A}$, then $G$ is cyclic subgroup separable. Indeed, by assumption, groups in the class $\mathcal G$ are cyclic subgroup separable. Furthermore, cyclic subgroup separability is closed under taking free products (see \cite[Lemma 2.3]{Evans}) and adding center: $G$ is cyclic subgroup separable if and only if so is $\mathbb{Z}^n\times G$, $n\in \mathbb{N}\cup \{0\}$.

The class of groups $\mathcal A$ is closed under taking subgroups: if $G\in \mathcal A$ and $H<G$, then $H\in \mathcal A$. In particular, subgroups of groups in $\mathcal A$, modulo the center, have a non-trivial free product decomposition or split over an infinite cyclic subgroup. More precisely, we have the following

\begin{lemma}\label{Lemma Subgroups}
Let $G$ be a group of $\mathcal{A}$ and let $H$ be a finitely generated subgroup of $G$. Then, $H \in \mathcal A$ and $H \cong \mathbb Z^n \times H^\prime$ for $n\in \mathbb{N} \cup \{0\}$, where
\begin{itemize}
\item $H^\prime$ is a non-trivial free product of groups in $\mathcal A$, and so in particular, it is residually finite, or\\[3pt]
\item $H^\prime$ belongs to $\mathcal G$.
\end{itemize}
\end{lemma}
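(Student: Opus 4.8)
The plan is to prove Lemma~\ref{Lemma Subgroups} by induction on the level $k$ of the $Z\ast$-closure, since a group $G \in \mathcal A$ lies in $(Z\ast(\mathcal G))_k$ for some $k$. The statement has two parts: first that $H \in \mathcal A$, i.e.\ that $\mathcal A$ is subgroup-closed, and second the structural dichotomy $H \cong \mathbb Z^n \times H'$ with $H'$ either a non-trivial free product or a member of $\mathcal G$. Let me address the subgroup-closure claim first, as it is the scaffolding for everything else.

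\emph{Subgroup closure.} At level $0$ we have $G \in \mathcal G$, and I must argue that a finitely generated subgroup $H$ of a group in $\mathcal G$ again lies in $\mathcal A$. Here $G$ splits as a graph of groups with free abelian vertex groups and cyclic edge groups, so by Bass--Serre theory $G$ acts on the associated tree $T$. The subgroup $H$ acts on $T$ as well, and the induced graph-of-groups decomposition $H \backslash T$ has vertex groups of the form $H \cap gAg^{-1}$ (subgroups of free abelian groups, hence free abelian) and edge groups of the form $H \cap gCg^{-1}$ (subgroups of cyclic groups, hence cyclic). Since $H$ is finitely generated, I would invoke the standard finiteness result (Bass--Serre / Karrass--Solitar) to conclude that a finite subgraph of groups suffices, giving $H$ a splitting with finitely generated free abelian vertex groups and cyclic edge groups. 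The remaining point is that this induced splitting is again cyclic subgroup separable: subgroups of cyclic-subgroup-separable groups need not in general inherit the property, so I would need to use that the induced decomposition of $H$ is itself of the controlled form (free abelian vertices, cyclic edges) together with the separability criteria cited in the introduction, or the fact that $H$ embeds into the cyclic subgroup separable $G$ in a way compatible with the splitting. For the inductive step, if $G \cong \mathbb Z^m \times (G_1 \ast \cdots \ast G_n)$ with each $G_i \in (Z\ast(\mathcal G))_{k-1}$, I would first project away the central $\mathbb Z^m$ and then apply the Kurosh subgroup theorem to the free product $G_1 \ast \cdots \ast G_n$: any subgroup of a free product is itself a free product of a free group and conjugates of subgroups of the free factors. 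By the inductive hypothesis those subgroup-of-factor pieces lie in $\mathcal A$, and a free group is a free product of copies of $\mathbb Z \in \mathcal A$, so the whole subgroup lies in $\mathcal A$; reinstating the central torus-intersection (again free abelian) keeps us inside $\mathcal A$ by definition of the $Z\ast$-closure.

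\emph{The structural dichotomy.} Given $H \in \mathcal A$ finitely generated, I would extract the maximal central free abelian direct factor $\mathbb Z^n$, writing $H \cong \mathbb Z^n \times H'$ where $H'$ has no further splittable center of this type; concretely $\mathbb Z^n$ is recovered from the center or from the $\mathbb Z^m$ factors accumulated through the $Z\ast$-construction. For $H'$ there are then two cases according to which horn of the free-product-versus-single-vertex alternative the Kurosh/Bass--Serre analysis lands in: either $H'$ decomposes non-trivially as a free product (in which case the factors are finitely generated groups in $\mathcal A$, and being a non-trivial free product forces residual finiteness, for instance because free products of residually finite groups are residually finite), or the action on the Bass--Serre tree is without a non-trivial free splitting, so $H'$ is (conjugate into) a vertex-type piece and one reads off directly that $H' \in \mathcal G$.

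I expect the main obstacle to be the persistence of cyclic subgroup separability under passage to the finitely generated subgroup $H$ at level $0$. Subgroup separability and its cyclic variant are not automatically inherited by arbitrary subgroups, so the argument must genuinely exploit the rigidity of the induced splitting (free abelian vertices, cyclic edges) and the embedding $H \hookrightarrow G$, rather than treating $H$ as an abstract subgroup. Establishing cleanly that the induced graph-of-groups decomposition of $H$ retains the defining properties of $\mathcal G$, including separability, is the technical heart of the lemma; the free-product bookkeeping via Kurosh's theorem and the extraction of the central $\mathbb Z^n$ are comparatively routine once that is in place.
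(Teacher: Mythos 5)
Your proposal follows the paper's proof essentially step for step: induction on the level of the $Z\ast$-closure, the base case handled via the induced Bass--Serre decomposition of $H$ acting on the tree of $G$, and the inductive step by splitting off the center and applying the Kurosh subgroup theorem to $p(H) < G_{1}\ast \cdots \ast G_{n}$, with the dichotomy in the statement read off from whether one lands in a non-trivial free product or inside a single factor.

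The one point where you fall short is your diagnosis, stated twice, that the persistence of cyclic subgroup separability under passage to the subgroup $H$ is ``the technical heart of the lemma''; you leave this step unresolved. In fact it is not an obstacle at all: cyclic subgroup separability passes to \emph{arbitrary} subgroups for purely formal reasons. If $C < H$ is cyclic and $h \in H \setminus C$, then $C$ is also a cyclic subgroup of $G$ and $h \in G\setminus C$, so by hypothesis there is a homomorphism $\pi \colon G \to Q$ onto a finite group with $\pi(h)\notin \pi(C)$; the restriction $\pi|_{H}\colon H \to \pi(H)$ is then a homomorphism onto a finite group with $\pi|_{H}(h)\notin \pi|_{H}(C)$, which is exactly cyclic subgroup separability for $H$. (The same restriction argument shows that residual finiteness and subgroup separability are inherited by subgroups; what fails in general is inheritance by \emph{quotients}, not by subgroups.) This is precisely why the paper dispatches the base case in a single sentence: a finitely generated subgroup of $G\in\mathcal G$ acts on the Bass--Serre tree of $G$, its induced splitting has free abelian vertex groups and cyclic edge groups, and it is cyclic subgroup separable by restriction, hence lies in $\mathcal G$. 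Once your misplaced worry is replaced by this observation, your argument is complete and coincides with the paper's. A final remark for fairness: the one assertion in your inductive step that is genuinely left unjustified, namely that the central extension splits so that $H \cong (H\cap \mathbb{Z}^m)\times p(H)$, is asserted in exactly the same form, and equally without further argument, in the paper, so there is no divergence there.
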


\begin{proof}
The proof is by induction on the level of the group $G\in \mathcal A$. The class $\mathcal G$ is closed under subgroups: if $G\in \mathcal G$, then it is a cyclic subgroup separable graph of groups with free abelian vertex groups and cyclic edge groups and so is any finitely generated subgroup $H$.

Assume now that the level of $G$ is $k>0$. Then, $G$ is of the form $\mathbb{Z}^m \times (G_{1}\ast \cdots \ast G_{n})$ for some $m\in \mathbb{N}\cup \{0\}$ and $G_i \in \mathcal A$ of level less than $k$. Then, there is a short exact sequence
\[ 
\begin{tikzcd}
1 \arrow{r}  & \mathbb{Z}^m \arrow{r} & G \arrow{r}{p} & G_{1}\ast \cdots \ast G_{n} \arrow{r} & 1.
\end{tikzcd} 
\]
It follows that $H$ is of the form $( H \cap \mathbb{Z}^m) \times p(H)$ and $p(H)$ is a subgroup of $G_{1}\ast \cdots \ast G_{n}$. If $n=1$, $p(H)$ is a group of $\mathcal{G}$ and so the first alternative of the statement holds. If $n>1$, then the result follows from the Kurosh subgroup theorem and the induction hypothesis.
\end{proof}

The class $\mathcal{A}$ contains interesting families of groups. Of special interest to us are $2$-dimensional coherent RAAGs and residually finite tubular groups. We next recall the definitions of these families of groups and we show that they indeed belong to the class $\mathcal A$.

Recall that given a finite simplicial graph $X$ with vertex set $V(X)$ and edge set $E(X)$, the corresponding \emph{right-angled Artin group} (\textit{RAAG}), denoted by $GX$, is given by the following presentation:
$$
GX= \langle V(X) \mid  xy=yx \iff (x,y) \in E(X) \rangle.
$$
To each right-angled Artin group $GX$, we can associate a $CAT(0)$ cube complex, called the \emph{Salvetti complex}. The \emph{dimension} of the RAAG $GX$ is defined as the (topological) dimension of the Salvetti complex which coincides with the number of vertices of the largest complete full subgraph of $X$. In particular, $GX$ is $2$-dimensional if and only if the underlying graph $X$ is triangle-free. See \cite{Charney} for a general survey of right-angled Artin groups.

Recall that a group is \emph{coherent} if all its finitely generated subgroups are also finitely presented. Carls Droms characterized coherent RAAGs in terms of the defining graph as follows (see \cite{Droms3}): the group $GX$ is coherent if and only if $X$ does not have induced cycles of length greater than $3$. In particular, $2$-dimensional coherent RAAGs are defined by graphs which are forests, that is, they are free products of tree groups, where by a \emph{tree group} we mean a right-angled Artin group whose defining graph is a tree.

The RAAG defined by the path with $4$ vertices $P_4$ and which, abusing the notation, we also denote by $P_4$ is the group given by the presentation:
\[
\langle a,b,c,d \mid ab=ba, bc=cb, cd=dc\rangle.
\]
Note that $P_{4}$ admits the following splitting as a graph of groups with free abelian vertex groups and cyclic edge groups:
\[P_{4}=\langle a,b\rangle \ast_{\langle b \rangle} \langle b,c \rangle  \ast_{\langle c \rangle} \langle c,d \rangle,\]

and it is cyclic subgroup separable (see, for instance, \cite[Theorem 3.4]{Evans}). Therefore, $P_{4} \in \mathcal{G}$. 

The RAAG $P_4$ plays an important role in the theory of $2$-dimensional coherent RAAGs as it serves as universe for them. Indeed, in \cite{Kim} it is shown that any $2$-dimensional coherent RAAG embeds in the group $P_{4}$. In particular, since $P_4$ belongs to $\mathcal G$ and the class is closed under subgroups, we deduce that $2$-dimensional coherent RAAGs also belong to $\mathcal{G}$.

In fact, all tree groups are examples of \emph{residually finite tubular groups}. Tubular groups are finitely generated groups that split as graphs of groups with $\mathbb{Z}^2$ vertex groups and $\mathbb{Z}$ edge groups. Despite their simple definition, they have a surprisingly rich source of diverse behaviour. Tubular groups provide examples of finitely generated $3$-manifold groups that are not subgroup separable; of free-by-cyclic groups that do not act properly and semi-simply on a CAT(0) space; of groups that are CAT(0) but not Hopfian, etc (see \cite{Hoda} and references there). In the same paper, the authors give a characterisation of cyclic subgroup separable tubular groups and prove that a tubular group is cyclic subgroup separable if and only if it is residually finite if and only if it is virtually primitive. Recall that a tubular group $G$ is \emph{primitive} if each edge group is a maximal cyclic subgroup of its vertex groups. From this characterisation we have that residually finite tubular groups belong to the class $\mathcal A$.

\section{Miller's proof and counterexamples in tree groups}
\label{Section 2}

Baumslag and Roseblade's result states that given $F_{1}$ and $F_{2}$ two finitely generated free groups and $S$ a finitely presented subgroup of $F_{1}\times F_{2}$, then $S$ is free or $S$ is virtually the direct product of two free groups.

We now briefly sketch Miller's strategy to highlight the relevant properties of free groups that are used in his proof of the aforementioned result.  Recall that a subgroup of a direct product is called a \emph{subdirect product} if its projection to each factor is surjective.

First of all, we can reduce to the case when $S$ is a subdirect product. Indeed, if we consider the projection maps $p_{1}\colon S \mapsto F_{1}$ and $p_{2} \colon S \mapsto F_{2}$, $p_{i}(S)$ is a finitely generated free group for $i\in \{1,2\}$, so we can assume that the projection maps are surjective.

Let us define $L_{i}$ to be $S\cap F_{i}$, $i\in \{1,2\}$. That is, \[L_{1}=\{s\in F_{1} \mid (s,1)\in S\} \quad \text{and} \quad L_{2}=\{s\in F_{2} \mid (1,s)\in S\}.\]

Observe that there is a short exact sequence
\[ 
\begin{tikzcd}
1 \arrow{r}  & L_{2} \arrow{r} & S \arrow{r}{p_{1}} & F_{1} \arrow{r} & 1.
\end{tikzcd} 
\]

If $L_{2}$ is trivial, $S$ is isomorphic to $F_{1}$ and so $S$ is free. A symmetric argument applies if $L_{1}$ is trivial.

Now assume that $L_{1}$ and $L_{2}$ are both non-trivial. Miller then proves, by using Marshall Hall's theorem for free groups, that for $i\in \{1,2\}$, $S$ is virtually an HNN extension with associated subgroup $L_{i}$ and since $S$ is finitely presented, $L_{i}$ needs to be finitely generated (see \cite[Lemma 2, Theorem 1]{Miller}).

From here one deduces that for $i\in \{1,2\}$, $L_{i}$ is a non-trivial finitely generated normal subgroup of $F_{i}$, so $L_{i}$ has finite index in $F_{i}$. Hence, since $L_{1}\times L_{2}$ is a subgroup of $S$, $L_{1}\times L_{2}$ has finite index in $S$.

Summarizing, the key points in Miller's proof are the following ones:\\[5pt]
(1) Subgroups of free groups are free;\\[3pt]
(2) if $S$ is finitely presented, the groups $L_{1}$ and $L_{2}$ are finitely generated;\\[3pt]
(3) finitely generated non-trivial normal subgroups of free groups are of finite index.

In the rest of the section, we show that none of the above conditions necessarily hold for tree groups. For that, we will use the right-angled Artin group $P_{4}$. 

Firstly, let us give an example to show that non-trivial finitely generated normal subgroups do not need to have finite index in tree groups. Consider the homomorphism $\varphi \colon P_{4} \mapsto \mathbb{Z}$ defined as \[ \varphi(a)=\varphi(b)=\varphi(c)=\varphi(d)=1,\] and let $S$ be the kernel of that homomorphism. It can be checked that $S$ is a free group of rank $3$ with basis $\{ ab^{-1},bc^{-1},cd^{-1}\}$. In particular, it is a non-trivial finitely generated normal subgroup of $P_{4}$, but $P_{4} \slash S$ is infinite cyclic.

Secondly, subgroups of tree groups do not need to be tree groups or not even RAAGs. In \cite{Droms}, Droms considers the homomorphism $\alpha \colon P_{4} \mapsto \mathbb{Z}_{2}$ such that \[\alpha(a)=\alpha(b)=\alpha(c)=\alpha(d)=1,\] and shows that the kernel of that homomorphism is not a right-angled Artin group.

Finally, let us give an example of a finitely presented subgroup of $P_{4}\times P_{4}$ such that $L_{1}$ is not finitely generated. Suppose that \[P_{4}^{1}=\langle a,b,c,d\rangle \quad \text{and}  \quad P_{4}^{2}=\langle a^\prime, b^\prime, c^\prime, d^\prime \rangle.\] Consider the homomorphism $f\colon P_{4}^{1}\times P_{4}^{2} \mapsto \mathbb{Z}$ defined as \[f(a)=f(b)=f(d)=1, \quad f(c)=0, \quad f(a^\prime)=f(b^\prime)=f(c^\prime)=f(d^\prime)=1.\]
The kernel of that homomorphism, say $S$, is finitely presented, but $S \cap P_{4}^{1}$ is not finitely generated, see \cite{Bestvina}.

When studying the structure of subgroups of the direct product of $n$ (limit groups over) free groups, Bridson, Howie, Miller and Short require stronger finiteness conditions (see \cite{something1}), namely they consider subgroups of type $FP_n(\mathbb Q)$, in order to generalise Baumslag and Roseblade's result and obtain a structure theorem for these subgroups. In the view of this, one may wonder whether requiring stronger finiteness conditions on the subgroup $S$ may improve the situation, and for instance, ensure that the subgroups $L_{1}$ and $L_{2}$ are finitely generated. However, this is not the case. In our last example, let us consider the short exact sequence
\[ 
\begin{tikzcd}
1 \arrow{r}  & L_{2} \arrow{r} & S\arrow{r}{p_{1}} & P_{4}^{1} \arrow{r} & 1.
\end{tikzcd} 
\]
The associated Hochschild-Lyndon-Serre spectral sequence converging to $H_{\ast}(S;\mathbb{Z})$ has \[E_{pq}^{2}= H_{p}(P_{4}^{1};H_{q}(L_{2};\mathbb{Z})).\]
Since $L_{2}$ is a free group, $H_{q}(L_{2};\mathbb{Z})=0$ for all $q\geq 2$. It follows that the only terms with $p+q=n$ that can possibly be nonzero are \[ H_{n}(P_{4}^{1}; H_{0}(L_{2};\mathbb{Z})) \quad \text{and} \quad H_{n-1}(P_{4}^{1};H_{1}(L_{2};\mathbb{Z})).\]
Right-angled Artin groups are of type $FP_{\infty}$, so both of them are finitely generated abelian groups. Since subgroups and quotients of finitely generated abelian groups are finitely generated, it follows that all the groups on the $E^{\infty}$ page that contribute to $H_{n}(S;\mathbb{Z})$ are finitely generated. Therefore, $H_{n}(S;\mathbb{Z})$ is finitely generated for all $n \geq 0$.

\section{Alternative properties in the class $\mathcal{G}$}
\label{Section 3}

The aim of this section is to study the class $\mathcal{G}$ and to see how the properties of free groups used in Miller's proof generalise for this class. Recall from the previous section that there are three key properties:\\[5pt]
(1) Subgroups of free groups are free;\\[3pt]
(2) if $S$ is finitely presented, the groups $L_{1}$ and $L_{2}$ are finitely generated;\\[3pt]
(3) finitely generated non-trivial normal subgroups of free groups are of finite index.

In this section we will prove that these properties generalise to the following ones for groups in $\mathcal{G}$:\\[5pt]
(1) Subgroups of groups in $\mathcal{G}$ lie in $\mathcal{G}$;\\[3pt]
(2) if $S$ is finitely presented, then a cyclic extension of $L_{i}$ is finitely generated, $i\in \{1,2\}$ (see Proposition~\ref{Proposition 1});\\[3pt]
(3) if $N$ is a non-trivial finitely generated normal subgroup of a group $G\in \mathcal{G}$ with trivial center, then $G \slash N$ is either finite or virtually abelian (see Proposition~\ref{Proposition 0}).

Recall that if $G$ is a graph of groups and $T$ is the Bass-Serre tree corresponding to a splitting of $G$, an element $g$ of a group $G$ satisfies the \emph{weak proper discontinuity} condition (or $g$ is a \emph{WPD element}) if for each vertex group $A$, we have that $A \cap A^g$ is a finite group. In our case, the vertex groups are torsion-free, so the condition that $A \cap A^g$ is a finite group reduces to $A \cap A^g=1$.

In order to consider actions with non-trivial kernel, we define relative WPD elements.

\begin{definition}
Let $G$ be a group in $\mathcal{G}$, let $T$ be the Bass-Serre tree corresponding to a standard splitting of $G$ and let $K$ be the kernel of the action of $G$ on $T$. An element $h$ of $G$ is a \emph{relative WPD element} if for each vertex group $A$, $A \cap A^g$ is the semidirect product of $K$ and a finite group.
\end{definition}

Note that if the vertex groups are torsion-free, then a relative WPD element requires $A \cap A^g= K$. Furthermore, if the action of $G$ on $T$ is faithful, i.e. $K=1$, then a relative WPD element is a WPD element.

We first prove the existence of relative WPD elements in groups in the class $\mathcal{G}$.

\begin{lemma}\label{Lemma WPD}
Let $G$ be a finitely generated group in $\mathcal{G}$, let $T$ be the Bass-Serre tree corresponding to a standard splitting of $G$ and let $K$ be the kernel of the action of $G$ on $T$. Then, $G$ has a relative WPD element. Moreover, there is a finite index subgroup of $G$ that has center $K$.
\end{lemma}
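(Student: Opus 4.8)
The plan is to treat this as an instance of the general principle that a group acting on a tree with \emph{small} edge stabilisers contains a loxodromic WPD element, but carried out relative to the global fixator $K$, which here plays the role that the trivial subgroup plays in the classical statement.

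First I would make the standard reductions: replace $T$ by the minimal $G$-invariant subtree and assume the standard splitting is reduced (edge groups proper in the adjacent vertex groups), which changes neither the stabilisers nor $K$. Since the splitting is non-trivial, $K$ fixes an edge, so it is contained in a cyclic edge group and is therefore itself cyclic; being the kernel of the action it is normal, so conjugation gives a map $G\to\Aut(K)$ with image in $\{\pm 1\}$ and hence $K$ is central in a subgroup $G_{0}\le G$ of index at most $2$. This already yields $K\le Z(G_{0})$. Because the vertex groups are torsion-free abelian, an element $h$ is a relative WPD element precisely when $A\cap A^{h}=K$ for every vertex group $A$: the finite part is forced to vanish, and $K\le A\cap A^{h}$ is automatic as $K$ is normal. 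If $G$ is elliptic it equals a single free abelian vertex group with $K=G=Z(G)$; if the action is along a line, minimality forces $T$ to be that line, the relevant quotient embeds in the infinite dihedral group, and a direct inspection gives both conclusions. So from now on I would assume the action is non-elementary.

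The heart of the matter is to produce a loxodromic $h$ with $A\cap A^{h}=K$. For such an $h$ with axis $\ell$ and $v\in\ell$, writing $A=\mathrm{Stab}(v)$ one has $A\cap A^{h}=\mathrm{Fix}([v,h^{-1}v])\supseteq\mathrm{Fix}(\ell)$, and this pointwise fixator is an intersection of cyclic edge stabilisers, hence cyclic and containing $K$. The key step is therefore to exhibit an edge that is \emph{weakly malnormal modulo $K$}: an edge $e$ and $g\in G$ with $\mathrm{Stab}(e)\cap\mathrm{Stab}(ge)=K$. Here I would exploit that the vertex groups are abelian---at a vertex carrying two edges whose cyclic edge groups span a rank-$2$ subgroup of $A_{v}$, those two edge groups already meet in $K$, while if no such transverse configuration occurs anywhere then all edge groups are pairwise commensurable and the action degenerates to the elementary case already treated. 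Cyclic subgroup separability is available to pass to a finite-index subgroup in which the relevant edge group is isolated, so that distinct translates of $e$ share only $K$. Feeding this weak malnormality into the Minasyan--Osin mechanism, in its relative form with $K$ in place of the trivial subgroup, produces a loxodromic $h$ for which the pointwise fixator of a long enough sub-segment of its axis, and hence $A\cap A^{h}$, equals $K$; this is the desired relative WPD element.

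Finally I would deduce the center statement. After enlarging the index so that a power of $h$ lies in $G_{0}$ while keeping $K\le Z(G_{0})$, take $z\in Z(G_{0})$. Then $z$ commutes with the loxodromic $h$, so it preserves $\ell=\mathrm{axis}(h)$; it cannot be loxodromic along $\ell$, for then all of $G_{0}$ would preserve $\ell$, contradicting the non-elementarity of the (finite-index, hence still non-elementary) action of $G_{0}$. Thus $z$ is elliptic and, commuting with $h$, fixes $\ell$ pointwise, so $z\in\mathrm{Fix}(\ell)\subseteq A\cap A^{h}=K$. Hence $Z(G_{0})\subseteq K$, and with the reverse inclusion from the first paragraph we obtain $Z(G_{0})=K$. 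The step I expect to be the main obstacle is establishing weak malnormality modulo $K$---equivalently, ruling out that the edge groups met along every axis lie in a single commensurability class---since this is exactly where the abelian-vertex/cyclic-edge structure and cyclic subgroup separability must be combined, and where the non-elementary hypothesis must be cleanly separated from the degenerate splittings for which the center conclusion would genuinely fail.
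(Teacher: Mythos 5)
Your argument rests on a dichotomy that is false, and the gap sits exactly where the paper does its main work. You claim that if no vertex carries two incident edge groups spanning a rank-$2$ subgroup (a \emph{transverse configuration}), then all edge groups are pairwise commensurable and the action \emph{degenerates to the elementary case already treated}. The first implication is fine in the freely indecomposable case, but the second is simply wrong: take $G = F_2 \times \mathbb{Z} = \langle a,b\rangle \times \langle c\rangle \in \mathcal{G}$ with its standard splitting as an HNN extension of $\langle a,c\rangle \cong \mathbb{Z}^2$ over $\langle c\rangle$ with stable letter $b$. Every edge stabilizer of the Bass--Serre tree equals $\langle c\rangle = K$, so there is no transverse configuration, yet $G/K \cong F_2$ acts on $T$ non-elementarily. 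The same happens for $BS(n,\pm n)$, and in fact this \emph{commensurable but non-elementary} situation is not degenerate at all: it is the main case of the paper's proof (the case where $C = \bigcap_{v,g} {G_v}^g$ is infinite cyclic). Your proposal contains no argument for it.

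Moreover, in that case the statement you need --- an edge $e$ and $g\in G$ with $\mathrm{Stab}(e)\cap\mathrm{Stab}(ge) = K$ --- is not a formality: it amounts to showing that the common commensurability class of the edge groups contains $K$ with finite index, equivalently that some power of the generator $c$ of $C$ is normal in $G$. This is precisely where cyclic subgroup separability must enter: in the paper, each $\langle t_i, c\rangle$ is a Baumslag--Solitar group $BS(m,n)$, separability together with \cite[Corollary 7.7]{Levitt} forces $|m|=|n|$, and unimodularity then yields a power $c^k$ generating $K$, of finite index in every edge stabilizer, so that $G/K$ acts acylindrically and a relative WPD element exists. This step cannot be bypassed: for $BS(2,3)$ (excluded from $\mathcal{G}$ precisely by separability) all edge stabilizers pairwise intersect non-trivially while $K=1$, so no relative WPD element exists at all. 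Your substitute use of separability --- passing to a finite-index subgroup in which the relevant edge group is \emph{isolated} --- is not available either: the implication (cyclic subgroup separable $\Rightarrow$ virtually isolated edge groups) is part of the open conjecture stated in the paper's introduction, known only for special families. The rest of your proposal is sound: the transverse case does force $K=1$ and feeds into Minasyan--Osin much as the paper's second case ($C$ trivial) does, and your derivation of $Z(G_0)\subseteq K$ by commuting a central element with the loxodromic relative WPD element is a clean way to make explicit an inclusion the paper leaves implicit; but neither covers the main case.
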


\begin{proof}
Let $G\in \mathcal G$ be finitely generated. If $G$ has a non-trivial free product decomposition, then $G$ has a WPD element, see \cite{Osin}. Therefore, we can assume that $G$ is freely indecomposable, that is, all edge groups are infinite cyclic. 

Let $\Gamma$ be the underlying graph of $G$. Note that since $G$ is finitely generated, $\Gamma$ is a finite graph. Let $Y_0$ be a lift of a maximal tree of $\Gamma$ in the Bass-Serre tree $T$ and let $t_1, \dots, t_s$ be the stable letters corresponding to the edges in $\Gamma \setminus Y_0$. Let $C$ be the intersection $\bigcap_{v\in V(Y_0), g\in \{1, t_1, \dots, t_s\}} {G_v}^g$, where $G_v$ is the vertex stabiliser of $v\in V(Y_0)$.

First, assume that $C$ is infinite cyclic generated by $c$. Then, since edge groups are cyclic, $C$ has finite index in each edge subgroup of a vertex group ${G_v}^g$, $v\in V(Y_0)$, $g\in \{1, t_1, \dots, t_s\}$. The vertex groups are abelian, so $C$ is central in the vertex stabilizers of vertices in $Y_{0}$. Furthermore, since $C$ has finite index in each edge group, we have that for each stable letter $t \in \{t_1, \dots, t_s\}$, there are $n=n(t_i), m=m(t_i)\in \mathbb{Z}$ such that $(c^m)^t=c^n$. Thus, $\langle t, c \rangle$ is isomorphic to the Baumslag-Solitar group $BS(m,n)$. Since $G$ is cyclic subgroup separable, so is $\langle t, c \rangle$ and by \cite[Corollary 7.7]{Levitt}, we have that $|m|=|n|$. Therefore, $t$ normalises the subgroup $\langle c^m \rangle$ and $t^2$ commutes with $c^m$, for each stable letter $t\in \{t_1, \dots, t_s\}$. It follows that there is a power of $c$, say $c^k$, that is normalized by $t_{i}$ and commutes with ${t_{i}}^2$ for all $i\in \{1,\dots,s\}$ and it also commutes with the vertex stabilizers of vertices in $Y_0$. Without loss of generality, we assume that $k$ is positive and minimal with these properties. Therefore, the subgroup $\langle c^k \rangle$ is normal in $G$ and $K= \langle c
^k \rangle$. Since $\langle c^k \rangle$ is the kernel of the action, there is an equivariant epimorphism $G \mapsto G \slash \langle c^k \rangle$. Now, $G \slash \langle c^k \rangle$ acts on $T$ acylindrically since it has finite edge stabilisers. It follows that $G \slash \langle c^k \rangle$ has a WPD element and so $G$ contains a relative WPD element. 

Let $\alpha \colon G \mapsto \langle t_1, \dots, t_s \rangle$ be the epimorphism that sends the vertex groups to $1$ and let $\beta$ be the epimorphism $\langle t_1, \dots, t_s \rangle \mapsto \mathbb Z_2$ that sends each generator $t_i$ to the generator of the group of order 2. The kernel $G^\prime<G$ of the epimorphism $\beta \circ \alpha$ is an index 2 subgroup which contains precisely the set of words $w$ that have an even number of letters $\{t_{1},\dots,t_{s},{t_{1}}^{-1},\dots,{t_{s}}^{-1}\}$. As we mentioned, $(c^k)^{t_i}= c^{\pm k}$ for $i \in \{1,\dots,s\}$ and $c^k$ commutes with the elements of the vertex groups. It follows that if $w \in G^\prime$, then $w$ commutes with $c
^k$. Therefore, $G^\prime$ is an index $2$ subgroup of $G$ with center $\langle c^k \rangle$.

Assume now that $C$ is trivial. Let $g= \prod_{v\in V(Y_0), j\in \{1,\dots, s\}}a_v \cdot {a_v}^{t_j}$, where $a_v$ is an element of the stabiliser $G_v$ of the vertex $v\in V(Y_0)$. 

If $h\in G_v \cap {G_v}^g$, then $h$ fixes the path from the vertex $G_v$ to the the vertex $gG_v$. Since this path contains all the vertices ${G_u}^t$, $u \in V(Y_0)$, $t\in \{1, t_1, \dots, t_s\}$, it follows that $h \in C$ and so $h=1$. Therefore, by \cite[Theorem 4.17]{Osin}, $g$ is a WPD element.
\end{proof}

We now turn our attention to the study of (finitely generated) normal subgroups of groups in the class $\mathcal G$.

\begin{lemma}\label{Lemma 1}
Let $G$ be a finitely generated group in $\mathcal{G}$, let $T$ be the Bass-Serre tree corresponding to a standard splitting of $G$ and let $K$ be the kernel of the action of $G$ on $T$. Suppose that $N$ is a non-trivial normal subgroup of $G$. Then, either $N<K$ or $N$ contains hyperbolic elements and it acts minimally on $T$.
\end{lemma}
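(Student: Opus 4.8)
The plan is to prove the dichotomy by splitting according to whether $N$ contains a hyperbolic element, in each case exploiting the normality of $N$ together with the fact that $G$ acts \emph{minimally} on $T$ (which we may assume, taking the standard splitting to be reduced, or else replacing $T$ by the minimal $G$-invariant subtree).

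First I would treat the case in which $N$ contains a hyperbolic element. Then $N$ admits a unique minimal invariant subtree $T_N$, namely the union of the axes of its hyperbolic elements. For any $g\in G$ the subtree $gT_N$ is a minimal invariant subtree for the conjugated action of $gNg^{-1}=N$, so uniqueness forces $gT_N=T_N$; hence $T_N$ is a non-empty $G$-invariant subtree. By minimality of the $G$-action, $T_N=T$, so $N$ acts minimally on $T$. This settles the second alternative.

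Next suppose $N$ contains no hyperbolic element. By the classification of groups acting on a tree without hyperbolic elements, $N$ either fixes a point of $T$ or fixes a unique end $\xi$. In the first case $\mathrm{Fix}(N)=\bigcap_{n\in N}\mathrm{Fix}(n)$ is a non-empty subtree, and it is $G$-invariant because $N$ is normal: if $n x=x$ for all $n\in N$ and $g\in G$, then $n(gx)=g(g^{-1}ng)x=gx$. Minimality gives $\mathrm{Fix}(N)=T$, so $N$ acts trivially on $T$, i.e.\ $N<K$, which is the first alternative. It remains to exclude the possibility that $N$ fixes a unique end $\xi$ without fixing any point. Here normality forces $G$ to fix $\xi$ as well, since for $g\in G$ the group $gNg^{-1}=N$ fixes $g\xi$ and uniqueness yields $g\xi=\xi$; thus $G$ would fix an end of $T$. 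I would rule this out using Lemma~\ref{Lemma WPD}: there $G/K$ is shown to act on $T$ with finite edge stabilisers and to contain a hyperbolic WPD element, so its action is acylindrical and of hyperbolic type. An acylindrical action containing a hyperbolic element cannot fix an end, as such a focal action would force the pointwise stabilisers of arbitrarily long segments of a ray towards $\xi$ to be infinite; since the $G$-action on $T$ factors through $G/K$, the group $G$ does not fix an end, a contradiction.

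The main obstacle is precisely this last step, namely showing that $G$ cannot fix an end of $T$. The cleanest route is to extract acylindricity (equivalently, the WPD property) from Lemma~\ref{Lemma WPD} and invoke the classification of acylindrical actions on trees, which excludes focal actions; I will need to check that the reduction carried out in that lemma (to the freely indecomposable case and the passage to $G/K$) genuinely produces an acylindrical action on the \emph{same} tree $T$. A secondary point requiring care is the minimality of the $G$-action on $T$, which underlies both the point-fixing case and the minimal-subtree argument, and which I justify by taking the standard splitting to be reduced.
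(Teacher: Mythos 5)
Your proof is correct, and it takes a route that is partly different from --- and in one place more careful than --- the paper's own argument. The hyperbolic case is handled exactly as in the paper: the union of the axes is the unique minimal $N$-invariant subtree, it is $G$-invariant by normality, hence equals $T$ by minimality of the $G$-action. For the point-fixing case the paper argues differently: it invokes Lemma~\ref{Lemma WPD} to produce a relative WPD element $h$ and concludes, for the vertex stabiliser $A$ containing $N$, that $N=N^h\subseteq A\cap A^h=K$; your observation that $\mathrm{Fix}(N)$ is a non-empty $G$-invariant subtree and hence all of $T$ is more elementary and equally valid. The genuine divergence is your third case: the paper passes directly from ``$N$ is not elliptic'' to ``$N$ contains a hyperbolic element''. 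That implication is automatic only for finitely generated groups, and $N$ is not assumed finitely generated here, so the horocyclic possibility (all elements elliptic, no fixed point, a unique fixed end) is left untreated in the paper; it is exactly the case you isolate and rule out, so your treatment closes a small gap rather than introducing a detour.

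Concerning the step you flagged: the worry is legitimate, since inside the proof of Lemma~\ref{Lemma WPD} the statement ``$G/K$ acts on $T$ acylindrically because it has finite edge stabilisers'' is only obtained in the sub-case where the intersection $C$ of the chosen vertex stabilisers is infinite cyclic; in the free-product case and in the sub-case $C=1$ one gets a WPD element but not that assertion. You can, however, avoid acylindricity altogether and argue uniformly with the relative WPD element itself. Once normality forces $G$ to fix the end $\xi$, the relative WPD element $h$ (which is hyperbolic) fixes $\xi$, so $\xi$ is an end of its axis. Every $n\in N$ is elliptic and fixes $\xi$, hence fixes pointwise a subray towards $\xi$, and this subray eventually runs along the axis of $h$; choosing a vertex $u$ far enough along it so that $h^{-1}u$ also lies on the fixed subray, we get $n\in \mathrm{Stab}(u)\cap \mathrm{Stab}(h^{-1}u)=\mathrm{Stab}(u)\cap \mathrm{Stab}(u)^{h}=K$, using torsion-freeness of $G$. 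Thus $N\subseteq K$, contradicting that $N$ fixes no point. This is the same one-line mechanism the paper uses in its elliptic case, applied elementwise, and it works in all cases of Lemma~\ref{Lemma WPD}. (Note also that the precise reason long segments acquire large fixators in your acylindricity argument is the horocyclic subgroup $N$ itself --- its subray-fixators exhaust the infinite group $N$ --- rather than focality of the $G$-action per se.)
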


\begin{proof}
We first show that if $N$ is elliptic, then it is contained in the kernel $K$ of the action. Indeed, assume that $N$ is a subgroup of a vertex stabilizer $A$. By Lemma~\ref{Lemma WPD}, the group $G$ has a relative WPD element, say $h$, and so we have that $A \cap A^h=K$. Since $N$ is normal, we have that $N^h=N$ is contained in $A \cap A^h = K$ and so $N$ is contained in the kernel of the action.

If $N$ is not elliptic it contains a hyperbolic element. Hence, the union of the axes of such elements is the unique minimal $N$-invariant subtree $X_{0}$ of $T$ (see \cite[Proposition 2.1 (6)]{Bridson2}). Since $N$ is normal in $G$, the $N$-invariant subtree $X_{0}$ is also invariant under the action of $G$. But $T$ is minimal as a $G$-tree, so $X_{0}=T$. Thus, $N$ acts minimally on $T$.
\end{proof}

\begin{proposition}\label{Proposition 0}
Let $G$ be a finitely generated group in $\mathcal{G}$, let $T$ be the Bass-Serre tree corresponding to a standard splitting of $G$, and let $K$ be the kernel of the action of $G$ on $T$. Suppose that $N$ is a non-trivial finitely generated normal subgroup of $G$. Then, either $N<K$ or $G \slash N$ is virtually cyclic. 

Furthermore, if $G\slash N$ is virtually $\mathbb Z$, then $N$ is a free product of free abelian groups whose ranks are bounded above by $r$, where $r$ is the maximum of the ranks of the (free abelian) vertex groups of $G$. In particular, if $G$ is a residually finite tubular group, $N$ is a finitely generated non-trivial normal subgroup of $G$ and $G\slash N$ is virtually $\mathbb Z$, we have that $N$ is a free group.
\end{proposition}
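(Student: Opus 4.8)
The plan is to reduce everything to the faithful quotient $\bar G = G/K$, which acts on $T$ with finite edge stabilisers, and then to exploit that such an action forces $\bar G$ to have infinitely many ends. By Lemma~\ref{Lemma 1}, the hypothesis $N\not< K$ means that $N$ contains a hyperbolic element and acts minimally on $T$; passing to $\bar G$, the image $\bar N = NK/K$ is therefore a \emph{finitely generated} normal subgroup of $\bar G$ that contains a loxodromic element and so is infinite. The first step is to record that $\bar G$ acts minimally and non-elementarily on $T$ with finite edge stabilisers: the edge groups of $G$ contain $K$ with finite index by the analysis in Lemma~\ref{Lemma WPD}, so their images in $\bar G$ are finite. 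Consequently $\bar G$ splits non-trivially over a finite subgroup and, not being virtually cyclic, has infinitely many ends by Stallings' theorem.

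The crux is then the following ends-theoretic input: if a finitely generated group has a finitely generated infinite normal subgroup of infinite index, then it is one-ended. Applying this to $1\to\bar N\to\bar G\to \bar G/\bar N\to 1$, with $\bar N$ finitely generated and infinite, and using that $\bar G$ has infinitely many ends, I would conclude that $\bar G/\bar N$ must be finite, i.e.\ that $\bar N$ has finite index in $\bar G$. The exact sequence $1\to K/(K\cap N)\to G/N\to \bar G/\bar N\to 1$ then exhibits $G/N$ as cyclic-by-finite, since its kernel is a quotient of $K\cong\mathbb{Z}$; hence $G/N$ is virtually cyclic, which is the first alternative of the statement. The degenerate case in which the standard splitting is trivial, or in which $\bar G$ is itself virtually cyclic (elementary action), is handled directly, since infinite normal subgroups of virtually cyclic groups have finite index.

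For the refinement, suppose $G/N$ is virtually $\mathbb{Z}$. As $\bar G/\bar N$ is finite, infiniteness of $G/N$ forces $K/(K\cap N)$ to be infinite, so $K\cap N$ has infinite index in $K\cong\mathbb{Z}$ and therefore $K\cap N=1$. Because $K$ sits with finite index in every edge group $E$ of the standard splitting, any non-trivial subgroup of the infinite cyclic group $E$ meets $K$ non-trivially; hence $N\cap E^{g}=1$ for every edge group, i.e.\ $N$ acts on $T$ with trivial edge stabilisers. Bass--Serre theory then presents $N$ as the free product of its vertex groups $N\cap G_v^{g}$ together with a free group, and each vertex group is a subgroup of a free abelian group of rank at most $r$; this yields the claimed free-product decomposition. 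Finally, in the tubular case $r=2$ and $K$ is an infinite cyclic subgroup of the rank-two vertex group $G_v$; a rank-two (hence finite-index) subgroup of $G_v$ would meet $K$ non-trivially, contradicting $N\cap K=1$, so each $N\cap G_v^{g}$ is cyclic and $N$ is free.

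The main obstacle is the ends-theoretic step. One must genuinely use the finite generation of $N$, not merely that $N$ acts minimally: there do exist infinitely generated normal subgroups carrying loxodromic elements whose quotients are $\mathbb{Z}^2$, for instance the kernel of a surjection of a free product of two copies of $\mathbb{Z}^2$ onto $\mathbb{Z}^2$. Thus the tree-theoretic dichotomy of Lemma~\ref{Lemma 1} alone cannot bound the index of $N$, and the finiteness of the quotient has to be extracted from an ends (equivalently, $\Sigma^1$/BNS) argument. Verifying that $\bar G$ really does have infinitely many ends, with a genuinely finite-edge-group action, is precisely where the hypotheses of the class $\mathcal{G}$ enter, via the central subgroup $K$ produced by cyclic subgroup separability in Lemma~\ref{Lemma WPD}.
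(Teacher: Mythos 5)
Your reduction to the faithful quotient $\bar G = G/K$ breaks down at its very first step, and this is a genuine gap rather than a fixable technicality. The claim that ``the edge groups of $G$ contain $K$ with finite index by the analysis in Lemma~\ref{Lemma WPD}'' is only what that analysis gives in the case where the intersection $C=\bigcap_{v,g}G_v^{\,g}$ considered there is infinite cyclic. In the complementary case $C=1$ (which Lemma~\ref{Lemma WPD} treats separately, by a different argument producing a WPD element rather than a central subgroup), the kernel $K$ is trivial while the edge groups of a freely indecomposable $G\in\mathcal G$ are infinite cyclic; so the edge stabilisers of the $\bar G$-action are infinite, $\bar G$ need not split over a finite subgroup, and Stallings' theorem gives nothing: $\bar G$ is typically one-ended, so your ends-theoretic input (``a finitely generated infinite normal subgroup of infinite index forces one end'') is vacuously compatible with $\bar G/\bar N$ being infinite. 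This is not a hypothetical failure: take $G=P_4\in\mathcal G$ with its standard splitting ($K=1$, $\bar G=G$ one-ended) and $N=\ker(P_4\to\mathbb Z)$ sending all four generators to $1$. Then $N$ is finitely generated (free of rank $3$), normal, contains hyperbolic elements, and $\bar G/\bar N=G/N\cong\mathbb Z$ is infinite, flatly contradicting your intermediate conclusion that $\bar N$ must have finite index in $\bar G$. The same false claim re-enters your refinement twice: once to show that $N$ meets every edge stabiliser trivially, and once (via ``$K$ is an infinite cyclic subgroup of the rank-two vertex group'') in the tubular case.

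The paper's proof avoids ends altogether, and its counting step is where finite generation of $N$ is really spent. Since $N\not<K$, Lemma~\ref{Lemma 1} gives that $N$ acts minimally on $T$, and minimality plus finite generation imply that $N\backslash T$ is finite (Bass). Hence for every edge stabiliser $C$ the set $N\backslash G/C\cong G/NC$ is finite. The second isomorphism theorem $NC/N\cong C/(N\cap C)$ then yields the dichotomy directly inside $G$, with no passage to $G/K$: if $N$ meets some edge stabiliser non-trivially, then $N$ has finite index in $NC$ and hence in $G$; if $N$ meets every edge stabiliser trivially, then $NC/N\cong\mathbb Z$ with $NC$ of finite index, so $G/N$ is virtually $\mathbb Z$, and moreover $N$ acts on $T$ with trivial edge stabilisers, which gives the free product decomposition of $N$ into intersections with conjugates of vertex groups, each of rank at most $\operatorname{rank}(G_v)-1$ because each vertex group of a freely indecomposable $G$ contains an infinite cyclic edge group that $N$ misses (for tubular groups this forces $N$ to be free). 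Your closing observation that the dichotomy of Lemma~\ref{Lemma 1} alone cannot suffice is correct, but the missing ingredient is this cocompactness/double-coset count, not an ends argument.
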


\begin{proof}
Suppose that $N$ is not contained in $K$. By Lemma \ref{Lemma 1}, $N$ acts minimally on $T$. In addition, $N$ is finitely generated, so by \cite[Proposition 7.9]{Bass}, $N \backslash T$ is finite.

Let $C$ be a cyclic edge stabilizer. Then, $|N\backslash G \slash C|$ is finite because the number of edges in $N\backslash T$ is an upper bound for that number.

From the fact that $N$ is normal in $G$, $N \backslash G \slash C$ and $G \slash N C$ are isomorphic. Therefore, $\big{|} G \slash NC \big{|}$ needs to be finite. Notice that if an edge group is trivial, $N$ is of finite index in $G$. Hence, we further assume that $G$ is freely indecomposable, that is, all the edge groups are infinite cyclic.

The Second Isomorphism Theorem gives us that \[NC \slash N \cong C\slash (N \cap C ).\]

It follows that $N \cap C \neq \{1\}$ if and only if $N$ has finite index in $N C$. Therefore, if $N$ intersects non-trivially an edge group $C$, we have that $N$ also has finite index in $G$.

We are left to consider the case when $N$ intersects trivially each edge group in the standard splitting of $G$. In this case, for each edge group $C$, \[ NC \slash N \cong C.\] Thus, since $N C$ has finite index in $G$, $G \slash N$ is virtually $\mathbb Z$. Furthermore, since $N < G$ and $N$ intersects trivially each edge group, $N$ gets induced a decomposition as a free product of free abelian groups, where the free abelian groups are the intersections of the (conjugates of the) vertex groups of $G$ with $N$. Since $G$ is freely indecomposable, each vertex group has an infinite cyclic edge group as subgroup and since $N$ does not intersect any edge group, it follows that the intersection of $N$ with a (conjugate of a) vertex group $G_v$ of $G$ has rank at most the rank $G_v$ minus $1$. In the particular case of tubular groups, since all the vertex groups are isomorphic to $\mathbb Z^2$, we have that the intersection with $N$ is at most of rank $1$ and so $N$ is a free group.
\end{proof}

Our second goal is to find an alternative for the fact that $L_{1}$ and $L_{2}$ are finitely generated in the case of free groups. For the class $\mathcal{G}$, we prove the following:

\begin{proposition}\label{Proposition 1}
Let $G$ be a finitely generated group in $\mathcal{G}$, let $T$ be the Bass-Serre tree corresponding to a standard splitting of $G$ and let $K$ be the kernel of the action of $G$ on $T$. Let $G^\prime$ be any group and let $S < G \times G^\prime$ be a finitely presented subdirect product. Define $L_{1}$ and $L_{2}$ to be $S \cap G$ and $S \cap G^\prime$, respectively. Assume that $L_{1}$ is non-trivial and it is not contained in $K$. Then, there is $y\in G^\prime$ such that $\langle L_{2}, y \rangle$ is finitely generated.
\end{proposition}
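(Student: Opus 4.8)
The plan is to follow Miller's template from Section~\ref{Section 2}, substituting the three properties of free groups by their counterparts for $\mathcal G$ and accepting that finite presentation will now control only a cyclic extension of $L_2$ rather than $L_2$ itself. First I would record the algebraic skeleton. Since $S$ is subdirect, $L_1=S\cap G$ is a non-trivial normal subgroup of $G$, and by hypothesis it is not contained in $K$; Lemma~\ref{Lemma 1} then supplies a hyperbolic element $g\in L_1$ and shows that $L_1$ acts minimally on $T$. Writing $Q=G/L_1$, Goursat's lemma identifies $S/(L_1\times L_2)$ with the graph of an isomorphism $G/L_1\cong G'/L_2$, so that $S$ is the fibre product $\{(x,x')\in G\times G' : \phi(x)=\psi(x')\}$, where $\phi\colon G\to Q$ and $\psi\colon G'\to Q$ are the quotient maps with $\ker\phi=L_1$ and $\ker\psi=L_2$. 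Pulling back the standard splitting of $G$ along $p_1\colon S\to G$ makes $S$ act cocompactly on $T$ with $L_2$ inside the kernel of the action; the stabiliser of an edge $e$ is $\tilde C_e=p_1^{-1}(C_e)$, an extension of $L_2$ by the cyclic group $C_e$, and its image under $p_2$ is exactly $\psi^{-1}(\phi(C_e))$. Thus if $C_e=\langle c\rangle$ has $\phi(c)$ of infinite order and $y\in G'$ satisfies $\psi(y)=\phi(c)$, then $p_2(\tilde C_e)=\langle L_2,y\rangle$. The proposition therefore reduces to producing an edge with $\phi(C_e)$ infinite and showing that the corresponding edge stabiliser $\tilde C_e$ (equivalently its projection $\langle L_2,y\rangle$) is finitely generated.

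Next, mirroring Miller's use of Marshall Hall's theorem, I would exploit the hyperbolic element and cyclic subgroup separability to put $S$, up to finite index, into an HNN or graph-of-groups form in which $\tilde C_e$ is an associated edge group carried by the axis of $(g,1)$. Concretely, the WPD machinery of Lemma~\ref{Lemma WPD} (via \cite{Osin}) makes $\langle g\rangle$ sufficiently malnormal on $T$, while cyclic subgroup separability of $G$ lets us pass to a finite-index subgroup with isolated edge groups, so that the induced splitting of the associated finite-index subgroup of $S$ is reduced and has $\tilde C_e$ as a genuine edge group with $(g,1)$, or a power of it, among its stable letters. Finiteness of the quotient graph comes from minimality together with \cite[Proposition 7.9]{Bass}, as in Proposition~\ref{Proposition 0}. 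I would then invoke finite presentation of $S$: for the clean decomposition just produced, a finitely presented fundamental group forces the associated edge subgroup to be finitely generated, giving $\tilde C_e$ finitely generated and hence $\langle L_2,y\rangle=p_2(\tilde C_e)$ finitely generated. This is precisely the step that in the free case yielded ``$L_i$ finitely generated''; here the edge group is $L_2$-by-$\mathbb Z$ rather than $L_2$, which explains why we obtain finite generation only of the cyclic extension.

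The hard part is the passage carried out in the previous paragraph. In the free case the vertex stabilisers are trivial, so Marshall Hall's theorem produces retractions and finite presentation of $S$ transparently bounds the edge groups; here the pulled-back vertex groups of $S$ are extensions of $L_2$ by free abelian groups and may themselves be infinitely generated, and indeed a finitely presented group can split over infinitely generated edge groups (witness $F_2$ written as an ascending HNN extension of the infinitely generated kernel of a map to $\mathbb Z$). The crux is therefore to use the hyperbolic WPD element in $L_1$ together with cyclic subgroup separability to select a splitting of $S$ for which finite presentation does force the one relevant edge group to be finitely generated, rather than appealing to finite presentation for an arbitrary splitting.

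A secondary point requiring care is the existence of an edge group $C_e$ of $G$ with $\phi(C_e)$ infinite, so that the resulting cyclic extension genuinely records the fibering direction of $Q=G/L_1$; this is governed by the dichotomy of Proposition~\ref{Proposition 0}, according to whether $L_1$ meets the edge groups of the standard splitting. If every edge group had finite image in $Q$, then $L_2$ is expected to be finitely generated already, and one takes $y=1$; the substantive case is exactly the one in which $L_1$ avoids some edge group, and I expect to handle it by the construction above. Throughout, the hypothesis that $L_1$ is not contained in $K$ is what guarantees, via Lemma~\ref{Lemma 1}, the hyperbolic element that drives the whole argument.
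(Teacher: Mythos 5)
Your frame is close to the paper's: Lemma~\ref{Lemma 1} supplies the hyperbolic element $t\in L_1$, cyclic subgroup separability together with \cite[Theorem 3.1]{Bridson2} gives a finite-index subgroup $M<G$ that is an HNN extension with stable letter $t$ and cyclic associated subgroup $\langle c\rangle$, and one pulls this back along $p_1$ to $M'=p_1|_S^{-1}(M)$, a finite-index (hence finitely presented) subgroup of $S$ whose associated subgroup is the extension $\langle L_2,\hat s_1\rangle$ of $L_2$, where $\hat s_1=cy$ is a lift of $c$ and the $p_2$-image of this associated subgroup is $\langle L_2,y\rangle$. But your proof stops exactly where the real work begins. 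The assertion ``for the clean decomposition just produced, a finitely presented fundamental group forces the associated edge subgroup to be finitely generated'' is false as stated --- as you yourself concede two sentences later --- and the repair you promise (``select a splitting of $S$ for which finite presentation does force the one relevant edge group to be finitely generated'') is never carried out. WPD/malnormality considerations and isolated edge groups do not by themselves resolve this: the obstruction is that the base of the pulled-back splitting, $p_1^{-1}(B)\cap S$, contains $L_2$ and may be infinitely generated, and there is no general principle bounding the associated subgroups of a finitely presented HNN extension whose base is infinitely generated.

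The paper closes this gap with two concrete moves that are absent from your proposal. First, it exploits the direct-product structure itself: the stable letter $t$ lies in $L_1=S\cap G$, hence commutes elementwise with $L_2=S\cap G'$ (they sit in different direct factors), so the induced presentation of $M'$ contains the relations $t^{-1}bt=b$ for all $b\in L_2$ together with $t^{-1}\hat s_1t=\hat s_2$. Second, since $M'$ is finitely generated, finitely many elements $a_1,\dots,a_k\in L_2$ suffice, together with $\hat s_1,\dots,\hat s_n,t$, to generate $M'$; setting $D=\langle a_1,\dots,a_k,\hat s_1,\dots,\hat s_n\rangle$, the centralizing relations guarantee (by removing pinches via Britton's lemma) that $L_2<D$, so that $M'=\langle D,t\mid t^{-1}\hat s_1t=\hat s_2,\ t^{-1}bt=b\ \forall b\in L_2\rangle$ is an HNN extension with \emph{finitely generated} base. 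This is precisely the situation of \cite[Lemma 2]{Miller}, which then forces the associated subgroup $\langle L_2,\hat s_1\rangle$ to be finitely generated; projecting to $G'$ kills $c$ and yields $\langle L_2,y\rangle$ finitely generated. So the missing idea is not a cleverer choice of splitting but the observation that $t$ centralizes $L_2$, which is what allows the base to be cut down to a finitely generated group still containing $L_2$ and places you in the setting of Miller's lemma. (Incidentally, your concern about producing an edge group with infinite image in $G/L_1$ is unnecessary: the paper's argument works even when $\langle c\rangle$ is trivial, in which case it shows that $L_2$ itself is finitely generated and one takes $y=1$.)
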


\begin{proof}
As $L_{1}$ is non-trivial and it is not contained in $K$, by Lemma~\ref{Lemma 1}, we have that $L_{1}$ contains a hyperbolic isometry, say $t\in L_{1}$ .

Since $G$ is cyclic subgroup separable, it follows from \cite[Theorem 3.1]{Bridson2} that there is a finite index subgroup $M$ in $G$, which is an HNN extension with stable letter $t$ and associated cyclic subgroup $\langle c \rangle$. As $G$ is finitely generated, $M$ is also finitely generated; suppose that $M= \langle s_{1},\dots,s_{n} \rangle,$ with $s_{1}=c$ and $s_{2}=t^{-1}s_{1}t$. 

Let us denote ${p_{1}}_{|S}^{-1}(M)$ by $M^\prime$. Since $M$ has finite index in $G$, $M^\prime$ has finite index in $S$ and since $S$ is finitely presented so is $M^\prime$. The HNN decomposition of $M$ induces a decomposition of $M^\prime$ as an HNN extension. Let us pick $\hat{s_{i}}\in S$ such that $p_{1}(\hat{s_{i}})=s_{i}$ for $i\in \{1,\dots,n\}$. Note that $t$ is an element in $S$. We then have that \[ M^\prime = \langle L_{2},\hat{s_{1}},\dots,\hat{s_{n}}, t \mid t^{-1}\hat{s_{1}}t=\hat{s_{2}}, t^{-1}bt=b, \forall b\in L_{2}, \mathcal{R}^\prime \rangle, \] where $\mathcal{R}^\prime$ is a set of relations in the elements $L_{2} \cup \{\hat{s_{1}},\dots, \hat{s_{n}}\}.$ Recall that $s_{1}=c$, so that $\hat{s_{1}}$ is an element of the form $c y$ in $S$.

Since $M^\prime$ is finitely generated, there are $a_{1},\dots,a_{k}$ in $L_{2}$ such that \[M^\prime= \langle a_{1},\dots,a_{k},\hat{s_{1}},\dots,\hat{s_{n}}, t \mid t^{-1}\hat{s_{1}}t=\hat{s_{2}}, t^{-1}bt=b, \forall b\in L_{2}, \mathcal{R}^\prime \rangle.\]
Let $D$ be the subgroup $\langle a_{1},\dots,a_{k},\hat{s_{1}},\dots,\hat{s_{n}} \rangle$. Note that $L_{2}$ is a subgroup of $D$ because $L_{2}< M^\prime$ and $t$ is an element in $L_{1}$. In conclusion, \[M^\prime= \langle D, t \mid t^{-1}\hat{s_{1}}t=\hat{s_{2}}, t^{-1}bt=b, \forall b\in L_{2}\rangle.\]
Since $M^\prime$ is finitely presented, by \cite[Lemma 2]{Miller}, we deduce that $\langle L_{2}, \hat{s_{1}} \rangle$ is finitely generated. Finally, since $L_{2}$ is contained in $G^\prime$ and $\hat{s_{1}}=c y$ with $c\in G$ and $y\in G^\prime$, we have that the image of $\langle L_{2}, \hat{s_{1}} \rangle$ under the natural projection $\pi \colon G\times G^\prime \mapsto G^\prime$  is finitely generated, that is, $\langle L_{2}, y \rangle$ is finitely generated.
\end{proof}

\begin{remark}\label{rem:finite index}
Let $S$ be a subdirect product of $G_1 \times G_2$ and let us define $L_i$ to be $G_i \cap S$, $i\in \{1,2\}$. Then, \[ G_{1} \slash L_{1} \cong S\slash (L_{1}\times L_{2}) \cong G_{2} \slash L_{2}.\] Indeed, since $S$ surjects onto $G_1$, we have an epimorphism $\pi \colon S \mapsto G_1\slash L_1$ with kernel $L_{1}\times L_{2}$.
\end{remark}

\section{Main result}
\label{Section 4}

The main goal of this section is to prove the following:

\begin{theorem}\label{Main theorem}
Let $S$ be a finitely presented subgroup of $G_{1}\times G_{2}$ where $G_{1}, G_{2} \in \mathcal{A}$. Then, $S$ is virtually $H$-by-(free abelian), where $H$ is the direct product of two groups in $\mathcal{A}$.

Furthermore, $S$ is virtually the kernel of a homomorphism $f$ from $H_1\times H_2$ to $\mathbb R$, for some $H_1, H_2\in \mathcal A$. More precisely, $H$ is equal to $L_1 \times L_2$, where $L_i = S\cap G_i$, $i\in \{1,2\}$ and either
\begin{itemize}
    \item $L_1 \times L_2 <_{fi} S <_{fi} G_1 \times G_2$, or
    \item $S$ is virtually the kernel $\ker f$ where $f\colon H_1 \times H_2 \mapsto \mathbb Z$ for some $H_i \in \mathcal A$, $i\in \{1,2\}$, or
    \item $S$ is virtually the kernel $\ker f$ where $f\colon H_1 \times H_2 \mapsto \mathbb{Z}^2$ for some $H_i\in \mathcal A$, $i\in \{1,2\}$. In this case, $L_i$ is the free product of finitely generated groups in $\mathcal A$ for $i\in \{1,2\}$.
\end{itemize}
\end{theorem}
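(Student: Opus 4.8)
The plan is to reduce to the case of a subdirect product and then to pin down the single group $Q$ through which everything factors. Since $S$ is finitely presented it is finitely generated, so each projection $H_i:=p_i(S)$ is a finitely generated subgroup of $G_i\in\mathcal A$ and therefore lies in $\mathcal A$ by Lemma~\ref{Lemma Subgroups}; replacing $G_i$ by $H_i$ I may assume $S<H_1\times H_2$ is a finitely presented subdirect product. Put $L_i:=S\cap H_i$; these are normal in $H_i$, and Remark~\ref{rem:finite index} identifies $Q:=H_1/L_1\cong S/(L_1\times L_2)\cong H_2/L_2$. Concretely $S$ is the fibre product of $H_1$ and $H_2$ over $Q$, i.e. $S=\ker\big(H_1\times H_2\to Q\big)$, the difference of the two quotient maps under the identification above. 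Hence $H:=L_1\times L_2$ is already a direct product of two groups in $\mathcal A$, and the whole theorem reduces to the single claim that $Q$ is virtually free abelian of rank at most $2$: rank $0$ gives the first bullet, and rank $n\in\{1,2\}$ presents $S$ as (virtually) the kernel of a homomorphism $H_1\times H_2\to\mathbb Z^n\hookrightarrow\mathbb R$, i.e. the second and third bullets.

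I would first clear away degenerate cases. If some $L_i$ is trivial then $S$ embeds in the other factor and $Q$ is trivial. The $Z\ast$-closure description of $\mathcal A$ lets the central free abelian factors split off, and a genuine free-product decomposition corresponds to trivial edge groups, which (as in the edge-trivial part of the proof of Proposition~\ref{Proposition 0}) forces $Q$ finite and recovers the Baumslag--Roseblade-type conclusion. This isolates the essential situation: $H_1,H_2\in\mathcal G$ are freely indecomposable with infinite cyclic edge groups, each acting on its Bass--Serre tree $T_i$ with kernel $K_i$.

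Now I would bring in the two main structural inputs. Whenever $L_i\not<K_i$, Lemma~\ref{Lemma 1} shows that the normal subgroup $L_i$ contains a hyperbolic element and acts minimally on $T_i$, so $L_i\backslash T_i$ is finite; by normality $Q=H_i/L_i$ then acts on this finite graph, and up to finite index $Q$ is generated by the images of the finitely many free abelian vertex groups. At the same time Proposition~\ref{Proposition 1}, applied in both orderings of the two factors, supplies elements $y_1\in H_1$ and $y_2\in H_2$ with $\langle L_1,y_1\rangle$ and $\langle L_2,y_2\rangle$ finitely generated. The images of $y_1$ and $y_2$ generate two cyclic subgroups of $Q$, one coming from each factor, and the finite generation of these cyclic extensions controls how the abelian vertex-images are glued along the cyclic edge groups. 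Assembling this, I would prove that $Q$ is covered by finitely many cosets of the product $A_1A_2$ of these two cyclic subgroups; the appearance of exactly two cyclic directions, one per factor, is what will cap the rank at $2$.

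The crux, and the step I expect to be the main obstacle, is to deduce virtual free abelianness from such a coset covering. Following the route announced in the introduction, I would lift the covering $Q=\bigcup_j A_1A_2\,q_j$ to a Baumslag--Solitar subgroup $B$ produced from a stable letter together with a central power of the generator of a cyclic edge group (exactly the $BS(m,n)$ appearing in the proof of Lemma~\ref{Lemma WPD}), so that $B$ too is covered by finitely many cosets of a product of two cyclic subgroups. Since $B$ is cyclic subgroup separable, \cite[Corollary 7.7]{Levitt} forces $|m|=|n|$, i.e. $B$ is unimodular, and a unimodular Baumslag--Solitar group admitting a finite coset covering by a product of two cyclic subgroups must be free abelian; this is precisely the point at which the hypothesis that edge groups are \emph{cyclic} rather than merely free abelian is essential, and it is the coset version of It\^o's theorem that the authors leave open in general. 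Propagating free abelianness back gives that $Q$ is virtually $\mathbb Z^n$ with $n\le 2$. Finally I would read off the three cases from the rank of $Q$: ranks $0,1,2$ give the three bullets, and in the rank-$2$ case $L_i$ meets every edge group trivially, so the induced splitting of $L_i$ has trivial edge stabilisers and finite underlying graph, exhibiting each $L_i$ as a free product of finitely many finitely generated free abelian groups, hence of groups in $\mathcal A$.
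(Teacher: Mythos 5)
Your overall scaffolding (reduce to a subdirect product, identify $Q=H_1/L_1\cong S/(L_1\times L_2)\cong H_2/L_2$, cover $Q$ by finitely many cosets of a product of two cyclic subgroups, pass to a Baumslag--Solitar group) matches the paper, but the crux step is wrong, and it is exactly the step that carries the paper's main technical content. The Baumslag--Solitar relation in question, $e^{-1}c_1^{m}e=c_1^{n}$, holds only \emph{modulo} $L_2$, i.e.\ in the quotient $Q$; it does not hold in $G_2$. Consequently the relevant $BS(m,n)$ is an abstract group equipped with a surjection onto the subgroup $\langle \bar e,\bar c_1\rangle< Q$ with some kernel $N$ --- it is \emph{not} a subgroup of $G_2$. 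This is fundamentally different from the group $\langle t,c\rangle\le G$ in the proof of Lemma~\ref{Lemma WPD}, where the relation holds in $G$ itself and cyclic subgroup separability of $G$ legitimately forces $|m|=|n|$ via \cite[Corollary 7.7]{Levitt}. Cyclic subgroup separability does not pass to quotients (and $Q$ is precisely the unknown group whose structure is being determined), so your appeal to Levitt to force unimodularity is unjustified, and the case $m\neq\pm n$ cannot be excluded. Handling that case is the heart of the paper's proof of Theorem~\ref{Theorem case 3}: one shows that if $m\neq\pm n$ then the kernel $N$ contains a power of $x$, by analysing the normal closure $\langle\langle x\rangle\rangle$ --- every element of $\langle\langle x\rangle\rangle$ commutes with a suitable power of $x$, and Lemma~\ref{Lemma BS} identifies $H_1(\langle\langle x\rangle\rangle;\mathbb{Z})$ with $\mathbb{Z}\left[\frac{1}{mn}\right]$ --- whence $c_1^{q}\in L_2$ for some $q\neq 0$ and $Q$ is virtually cyclic. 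Your proposal contains no substitute for this argument, so the main case is open in your write-up.

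There is also a gap in your reduction. You dispose of factors with a nontrivial free product decomposition by claiming trivial edge groups force $Q$ finite, but that argument (the edge-trivial part of Proposition~\ref{Proposition 0}) needs $L_i$ to be \emph{finitely generated}, which is not automatic: in the paper this is supplied by Proposition~\ref{Proposition fg} (using finite presentability of $S$, Miller's lemma and the Bridson--Howie finite-index free factorisation), and it only applies to the intersection with the factor \emph{opposite} a free product. In the mixed case --- one factor a free product, the other freely indecomposable in $\mathcal{G}$ --- the quotient $Q$ need not be finite: it can be virtually $\mathbb{Z}$ (this is the second bullet of the paper's Theorem~\ref{Theorem case 2}, and it genuinely occurs, e.g.\ for fibre products over $\mathbb{Z}$), so your claim that free-product factors always lead to the Baumslag--Roseblade conclusion is false and a whole case of the theorem is lost. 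A minor further slip: if $L_1=1$ then $Q\cong H_1$ is not trivial; the case is still harmless because $S\cong H_2$, which is how the paper treats it.
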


Let $G_{1}$ and $G_{2}$ be two groups in $\mathcal{A}$ and let $S < G_{1} \times G_{2}$ be a finitely presented subgroup. Let $p_1\colon G_{1}\times G_{2} \mapsto G_{1}$ and $p_2\colon G_{1}\times G_{2} \mapsto G_{2}$ be the two natural projection maps. Then, $S$ is a subdirect product of $p_{1}(S)\times p_{2}(S)$. By Lemma~\ref{Lemma Subgroups}, for $i\in \{1,2\}$, $p_{i}(S)$ is either the direct product of a residually finite free product and a free abelian group (type (1)), or it is the direct product of a group of $\mathcal{G}$ and a free abelian group (type (2)).

\begin{remark} \label{rem:no center}
Suppose that $S$ is a subdirect product of $G_1 \times G_2$ such that $G_i = \mathbb{Z}^{n_{i}} \times H_{i}$, $n_{i}\in \mathbb{N}\cup \{0\}$ and $H_{i}$ is either a residually finite free product or a group in $\mathcal{G}$, for $i\in \{1,2\}$. That is, $S$ is a subdirect product of
\[ (\mathbb{Z}^{n_{1}}\times \mathbb{Z}^{n_{2}}) \times (H_{1}\times H_{2}).\]
Let $\pi$ be the projection map
\[(\mathbb{Z}^{n_{1}}\times \mathbb{Z}^{n_{2}}) \times (H_{1}\times H_{2})\mapsto H_{1}\times H_{2}.\]
Notice that in order to prove Theorem~\ref{Main theorem}, it suffices to prove it for $\pi(S)$. Indeed, if $\pi(S)$ is virtually $H$-by-(free abelian), where $H$ is the direct product of two groups in $\mathcal{A}$, then $S$ is also virtually $H$-by-(free abelian).
\end{remark}

In order to prove Theorem \ref{Main theorem}, we will distinguish three cases depending on the type of $p_{1}(S)$ and $p_{2}(S)$ described in Lemma \ref{Lemma Subgroups}.

\subsection{The groups $p_{1}(S)$ and $p_{2}(S)$ are of type (1)}
\label{Subsection 1}

By Remark \ref{rem:no center}, we can assume that the groups are residually finite free products. 

\begin{theorem}\label{Theorem case 1}
Let $G_{1} \times G_{2}$ be the direct of two finitely generated residually finite groups that admit a non-trivial free product decomposition. Let $S$ be a finitely presented subdirect product in $G_{1}\times G_{2}$ and define $L_{i}$ to be $S \cap G_{i}$, $i\in \{1,2\}$.
\begin{itemize}
\item If $L_{1}=1$, then $S$ is isomorphic to $G_{2}$.
\item If $L_{2}=1$, then $S$ is isomorphic to $G_{1}$.
\item Otherwise, $L_i$ has finite index in $G_i$, $i\in \{1,2\}$ and so $L_{1}\times L_{2}$ has finite index in $S$. If particular, for $i\in \{1,2\}$, if $G_i \in \mathcal A$, then $L_i \in \mathcal A$ and it is finitely generated.
\end{itemize}
\end{theorem}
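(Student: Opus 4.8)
The plan is to mimic Miller's argument but replace Marshall Hall's theorem (which gave the HNN/retract structure for free groups) with the WPD/acylindricity machinery available for the class $\mathcal{G}$, and to replace ``subgroups of free groups are free'' by Lemma~\ref{Lemma Subgroups}. The first two bullets are immediate: if $L_2 = S\cap G_2 = 1$ then the projection $p_1\colon S\to G_1$ is injective, and since $S$ is subdirect $p_1$ is also surjective, so $S\cong G_1$; symmetrically for $L_1 = 1$. (Note the statement has the two cases labelled by the \emph{other} index, but the content is this.) So the whole weight of the theorem is in the third bullet, where $L_1$ and $L_2$ are both nontrivial.

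\emph{Main argument for the third bullet.} First I would promote $L_i$ to a finitely generated object. Since $G_1, G_2$ are residually finite free products, each has a WPD element by \cite{Osin}; in particular each $G_i$ acts on a tree (the Bass--Serre tree of the free product decomposition) with trivial kernel $K=1$. Because $L_1$ is a nontrivial \emph{normal} subgroup of the nonabelian $G_1$ (it is normal since $L_1\times L_2 = \ker(S\to G_1)$ is normal in $S$ and, by subdirectness, its image is normal in $G_1$ — more precisely $L_1 = S\cap G_1 \trianglelefteq S$ projects to a normal subgroup of $G_1$), Lemma~\ref{Lemma 1} with $K=1$ forces $L_1$ to contain a hyperbolic element and act minimally on the tree of $G_1$. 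Now apply Proposition~\ref{Proposition 1} (with $G=G_1$, $G'=G_2$, $K=1$): since $L_1$ is nontrivial and not contained in $K=1$, we obtain an element $y\in G_2$ with $\langle L_2, y\rangle$ finitely generated, and symmetrically an element in $G_1$ making a finite extension of $L_1$ finitely generated. Since the free-product/tree kernel is trivial, $\langle L_2,y\rangle / L_2$ is a quotient of $\mathbb{Z}$, so $L_2$ itself is either finitely generated or index-$\infty$-in-a-f.g.-group; in the free-product setting the relative WPD element has $K=1$, so in fact $L_i$ is finitely generated directly (this is the clean analogue of Miller's step (2)).

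\emph{Finishing.} Having shown $L_i$ is a finitely generated nontrivial normal subgroup of $G_i$, I would invoke Proposition~\ref{Proposition 0} (again with $K=1$, since $G_i$ has a nontrivial free product decomposition and hence trivial tree-kernel). That proposition says either $L_i < K = 1$ — impossible, as $L_i\neq 1$ — or $G_i/L_i$ is virtually cyclic. I would then rule out the virtually-$\mathbb{Z}$ alternative: by Remark~\ref{rem:finite index}, $G_1/L_1 \cong S/(L_1\times L_2)\cong G_2/L_2$, so both quotients are simultaneously virtually cyclic; but one checks (using that $G_i$ is a nontrivial free product, so any infinite cyclic quotient would force the free factors to be cyclic, contradicting residual finiteness of a genuine free product, or by a Bestvina--Brady/Euler characteristic argument on the finite presentation of $S$) that this quotient cannot be infinite. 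Hence $G_i/L_i$ is finite, i.e.\ $L_i <_{fi} G_i$, whence $L_1\times L_2 <_{fi} S$. Finally $L_i \in\mathcal{A}$ and is finitely generated by Lemma~\ref{Lemma Subgroups}, since it is a finite-index, hence finitely generated, subgroup of $G_i\in\mathcal{A}$.

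\emph{The main obstacle} I anticipate is ruling out the virtually-$\mathbb{Z}$ case: Proposition~\ref{Proposition 0} genuinely allows $G_i/L_i$ to be virtually $\mathbb{Z}$ (coherent RAAGs and free products fiber over $\mathbb{Z}$), so the theorem's conclusion that $L_i$ has \emph{finite} index is stronger than what the normal-subgroup proposition alone gives. The resolution must come from the fact that $G_1$ and $G_2$ \emph{simultaneously} fiber compatibly through $S$ being finitely presented, together with the special structure of \emph{free products} (as opposed to general groups in $\mathcal{G}$) — this is precisely why this case, where both projections have a free-product decomposition, yields the stronger direct-product conclusion and ultimately specializes to Baumslag--Roseblade.
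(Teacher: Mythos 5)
Your overall outline (first show $L_i$ is finitely generated, then use normality plus the free product structure to force finite index) matches the paper's, but both key steps have genuine gaps. First, finite generation of $L_i$: you invoke Proposition~\ref{Proposition 1}, but that proposition is stated for $G\in\mathcal{G}$ (cyclic subgroup separable graphs of groups with \emph{free abelian} vertex groups and cyclic edge groups), whereas here $G_1,G_2$ are arbitrary finitely generated residually finite free products, so it does not apply as stated. More seriously, even granting its conclusion, knowing that $\langle L_2,y\rangle$ is finitely generated and that $\langle L_2,y\rangle/L_2$ is cyclic does \emph{not} imply $L_2$ is finitely generated: the normal closure of $a$ in $F_2=\langle a,b\rangle$ is an infinitely generated normal subgroup with infinite cyclic quotient of a finitely generated group, so your deduction ``so in fact $L_i$ is finitely generated directly'' is a non sequitur. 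The paper closes this gap with a separate statement, Proposition~\ref{Proposition fg}, whose proof is tailored to the free product case: one takes a \emph{nontrivial element $t\in S\cap G_2$ itself} as the stable letter, uses residual finiteness and \cite[Theorem 3.1]{Bridson2} to find $M=B\ast\langle t\rangle$ of finite index in $G_2$, and observes that, because the edge groups of this splitting are trivial, the induced HNN splitting of $p_2^{-1}(M)$ has associated subgroup \emph{exactly} $L_1$, centralized by $t$; then \cite[Lemma 2]{Miller} gives $L_1$ finitely generated outright, with no cyclic extension to get rid of.

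Second, the finite index step. You apply Proposition~\ref{Proposition 0} (again out of scope, for the same reason) and are then forced to exclude the virtually-$\mathbb{Z}$ alternative; your proposed exclusion rests on a false claim, since an infinite cyclic quotient of a nontrivial free product does \emph{not} force the free factors to be cyclic (e.g.\ $\mathbb{Z}^2\ast\mathbb{Z}^2\twoheadrightarrow\mathbb{Z}$). What is true --- and is exactly the missing ingredient --- is that such a quotient cannot have \emph{finitely generated} kernel: this is B.~Baumslag's theorem \cite{Baumslag}, that a nontrivial finitely generated normal subgroup of a group admitting a nontrivial free product decomposition has finite index. The paper cites this result and concludes immediately, with no dichotomy to resolve; without it, the virtually-$\mathbb{Z}$ case in your argument is simply not ruled out, and any correct ruling-out argument would essentially reprove that theorem. (A small side point: your parenthetical claim that the first two bullets of the statement are labelled by the wrong index is mistaken --- if $L_1=S\cap G_1=1$, then $\ker(p_2|_S)=S\cap G_1=1$, so $p_2$ is an isomorphism and $S\cong G_2$, exactly as stated.)
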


We will first show that the groups $L_{1}$ and $L_{2}$ defined in the statement are finitely generated.

\begin{proposition}\label{Proposition fg}
Let $A \times K$ be the direct product of a group $A$ with $K$, where $K$ is a finitely generated residually finite group that admits a non-trivial free product decomposition. Suppose that $S$ is a subdirect product in $A \times K$ which intersects $K$ non-trivially. Then, $L= S \cap A$ is finitely generated.
\end{proposition}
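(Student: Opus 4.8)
The plan is to realise a finite-index subgroup of $S$ as an HNN extension whose associated subgroup is exactly $L$, and then to force $L$ to be finitely generated by invoking finite presentability through Miller's argument \cite[Lemma 2]{Miller}; this is the free-product counterpart of the mechanism used in the proof of Proposition~\ref{Proposition 1}. Throughout I use that $S$ is finitely presented (the setting in which the Proposition is applied). This hypothesis is genuinely needed: the kernel of a map $F_2\times F_2\to\mathbb{Z}$ is a \emph{finitely generated} subdirect product meeting each factor non-trivially, yet its intersection with one factor is infinitely generated. Write $p_A\colon S\to A$ and $p_K\colon S\to K$ for the projections, so that $L=\ker p_K$ and $L_K:=S\cap K=\ker p_A$. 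Since $S$ is finitely presented it is finitely generated, whence $A=p_A(S)$ and $K=p_K(S)$ are finitely generated; in particular the free factors of $K=B\ast C$ are finitely generated.

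First I would record the structural facts coming from $K=B\ast C$. Conjugating $(1,\ell)\in S$ (with $\ell\in L_K$) by a preimage in $S$ of an arbitrary $k\in K$ shows that $L_K$ is normal in $K$. As $L_K\neq 1$ and the intersection of all conjugates of a factor of a non-trivial free product is trivial, $L_K$ cannot be elliptic on the Bass--Serre tree $T$ of $K=B\ast C$; hence $L_K$ contains a hyperbolic (and, since the action is acylindrical, WPD) element $t$. Viewing $t=(1,t)\in S$, the element $t$ centralises $L=S\cap A$ because $A$ and $K$ commute in $A\times K$. Feeding $p_K$ back, $S$ acts on $T$ with kernel $L$ and quotient the single edge of $B\ast C$, so $S=S_B\ast_{L}S_C$ with $S_B=p_K^{-1}(B)$ and $S_C=p_K^{-1}(C)$; the point of this observation is that, the edge groups of $K$ being trivial and $L=\ker p_K$ being normal in $S$, every edge stabiliser of the $S$--action on $T$ equals $L$ exactly.

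Next I would use residual finiteness of $K$ to upgrade the hyperbolic element to a stable letter. Since the edge groups of $B\ast C$ are trivial (hence separable) and $t$ is WPD, a version of \cite[Theorem 3.1]{Bridson2} should furnish a finite-index subgroup $M<K$ that is an HNN extension with stable letter a power $t^{n}$ and \emph{trivial} associated subgroup; equivalently $M=D_0\ast\langle t^{n}\rangle$ with $D_0$ finitely generated. Pulling back along $p_K$, the subgroup $M'=(p_K|_S)^{-1}(M)$ has finite index in $S$, is therefore finitely presented, and inherits an HNN decomposition whose stable letter is $t^{n}$ and whose associated subgroup is the preimage of the trivial edge group, namely $L$; as $t^{n}$ centralises $L$ the associated isomorphism is the identity. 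Thus $M'=\langle D', t^{n}\mid (t^{n})^{-1}\ell\,t^{n}=\ell,\ \ell\in L\rangle$ for the (possibly infinitely generated) base $D'=(p_K|_{M'})^{-1}(D_0)$.

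Finally, exactly as in the proof of Proposition~\ref{Proposition 1}, I would replace $D'$ by a finitely generated base: since $M'$ is finitely generated one may choose finitely many elements of $D'$ which, together with $t^{n}$, generate $M'$, and a Britton normal-form argument (using that elements of $L$ centralise $t^{n}$ and project trivially to $K$, whereas $t^{n}$ does not) shows the resulting finitely generated base $D$ still contains $L$. Then $M'=\langle D,t^{n}\mid (t^{n})^{-1}\ell\,t^{n}=\ell,\ \ell\in L\rangle$ is an HNN extension of the finitely generated group $D$ with associated subgroup $L$, and the finite presentability of $M'$ together with \cite[Lemma 2]{Miller} yields that $L$ is finitely generated. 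The main obstacle I anticipate is the separability step producing $M$: one must realise a power of the hyperbolic element $t$ as a stable letter of a finite-index subgroup of the free product $K$ from residual finiteness alone (a free-product analogue of Marshall Hall's theorem), and then carry out the Britton bookkeeping keeping $L$ inside a finitely generated base. In contrast with Proposition~\ref{Proposition 1}, the triviality of the edge groups of $K$ makes the associated subgroup equal to $L$ on the nose, so we obtain that $L$ itself---and not merely a cyclic extension of it---is finitely generated.
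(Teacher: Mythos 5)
Your proof is correct and follows essentially the same route as the paper's: residual finiteness of $K$ together with \cite[Theorem 3.1]{Bridson2} yields a finite-index subgroup $M \le K$ of the form $B \ast \langle t \rangle$ with $t \in S \cap K$, its preimage in $S$ is a finitely presented HNN extension over $L$ whose stable letter centralises $L$, and after passing to a finitely generated base still containing $L$ one concludes with \cite[Lemma 2]{Miller}. If anything, your write-up is more careful than the paper's on two points it glosses over: the hypothesis that $S$ be finitely presented (omitted from the statement but needed, as your $F_2\times F_2$ example shows), and the fact that $t$ may be chosen hyperbolic via normality of $S\cap K$, which is what the application of \cite[Theorem 3.1]{Bridson2} actually requires.
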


\begin{proof}
By hypothesis, since $S$ intersects $K$ non-trivially, there is a non-trivial element $t$ in $S\cap K$. Since $K$ has a non-trivial free product decomposition, it acts minimally on a tree $T$ with trivial edge stabilisers. Moreover, $K$ is residually finite, so the trivial group is closed in the pro-finite topology. Thus, by \cite[Theorem 3.1]{Bridson2}, there is a finite index subgroup $M$ in $K$ which is a free product of the form $B \ast \langle t \rangle$. Since $K$ is finitely generated, so is $M$. Let $\{t,s_{1},\dots,s_{n}\}$ be a generating set for $M$. For $i\in \{1,\dots,n\}$, let  us pick $\hat{s_{i}}\in p_{2}^{-1}(s_{i})$, where $p_{2}$ is the projection map $S \mapsto K$. Let $S_{0}=p_{2}^{-1}(M)$.

Note that $S_{0}$ is of finite index in $S$ since $M$ is of finite index in $K$. The free product decomposition of $M$ induces a splitting of $S_{0}$ of the form: \[S_{0}=\langle L, t, \hat{s_{1}},\dots,\hat{s_{n}} \mid t^{-1}bt=b, \hat{s_{i}}^{-1}b\hat{s_{i}}=\phi_{i}(b), i\in \{1,\dots,n\}, \forall b\in L\rangle,\] where $\phi_{i}$ is the automorphism of $L$ induced by conjugation by $\hat{s_{i}}$.

Now, since $S$ is finitely presented and $S_{0}$ is of finite index in $S$, we have that $S_{0}$ is also finitely presented. Suppose that $S_{0}$ is generated by elements $a_{1},\dots,a_{k}$ in $L$ together with the elements $t,\hat{s}_{1},\dots,\hat{s}_{n}$. Let $D$ be the group $\langle a_{1},\dots,a_{k},\hat{s_{1}},\dots,\hat{s_{n}}\rangle$. Since $L$ is a subgroup of $S_{0}$ and $t\in K$, $L$ is a subgroup of $D$. Moreover, \[S_{0}=\langle D, t \mid t^{-1}bt=b, \forall b\in L \rangle.\]
Finally, since $S_{0}$ is finitely presented, by \cite[Lemma 2]{Miller}, we have that $L$ is finitely generated.
\end{proof}

We now address the proof of Theorem~\ref{Theorem case 1}.

\begin{proof}[Proof of Theorem~\ref{Theorem case 1}]
Note that there are short exact sequences
\[ 
\begin{tikzcd}
1 \arrow{r}  & L_{i} \arrow{r} & S \arrow{r} & G_{j} \arrow{r} & 1,
\end{tikzcd} 
\]
for $i\neq j \in \{1,2\}$, so if $L_{i}$ is trivial for some $i\in \{1,2\}$, $S$ is isomorphic to $G_{j}$ for $j\neq i\in \{1,2\}$.

Now suppose that $L_{1}$ and $L_{2}$ are non-trivial. By Proposition~\ref{Proposition fg}, they are finitely generated. Then, for $i\in \{1,2\}$, $L_{i}$ is a non-trivial finitely generated normal subgroup of $G_{i}$, and since by assumption $G_{i}$ admits a non-trivial free product decomposition, it follows from \cite{Baumslag} that $L_{i}$ has finite index in $G_{i}$. Therefore, $L_{1} \times L_{2}$ has finite index in $S$.
\end{proof}

\subsection{The group $p_{1}(S)$ is of type (1) and $p_{2}(S)$ is of type (2)} 
\label{Subsection 2}

By Remark~\ref{rem:no center}, it suffices to check the result of Theorem~\ref{Main theorem} for $G_{1}\times G_{2}$, where $G_{1}$ is a residually finite free product and $G_{2}$ is a group in $\mathcal{G}$.

\begin{theorem}\label{Theorem case 2}
Let $G_{1}$ be a finitely generated residually finite group that decomposes as a non-trivial free product and let $G_{2}$ be a finitely generated group in $\mathcal{G}$. Let $S$ be a finitely presented subdirect product in $G_{1}\times G_{2}$. Then, $S$ is virtually $H$-by-(free abelian), where $H$ is the direct product of two groups in $\mathcal{A}$.

More precisely, $H$ is equal to $L_1 \times L_2$, where $L_i = S\cap G_i$, $i\in \{1,2\}$ and either
\begin{itemize}
    \item $L_1 \times L_2 <_{fi} S <_{fi} G_1 \times G_2$, or
    \item $S$ is virtually the kernel $\ker f$ where $f\colon H_1 \times H_2 \mapsto \mathbb Z$ for some $H_i <_{fi} G_i$, $i\in \{1,2\}$. Furthermore, if $G_1\in \mathcal A$, then $L_2 < G_2$ is finitely generated and $L_1$ is either finitely generated or the free product of finitely generated groups in $\mathcal A$.
\end{itemize}
\end{theorem}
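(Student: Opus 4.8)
The plan is to analyze the structure of $S$ via the short exact sequence
\[
\begin{tikzcd}
1 \arrow{r} & L_{2} \arrow{r} & S \arrow{r}{p_{1}} & G_{1} \arrow{r} & 1,
\end{tikzcd}
\]
together with the symmetric one for $L_{1}$, and to split into cases according to whether $L_{1}$ and $L_{2}$ are trivial, contained in the kernel $K$ of the action of $G_{2}$ on its Bass--Serre tree $T$, or not. First I would dispose of the degenerate cases: if $L_{2}=1$ then $S\cong G_{1}$, and if $L_{1}=1$ then $S\cong G_{2}$, so both land in the class and we are done. Assuming both are non-trivial, the key dichotomy comes from Lemma~\ref{Lemma 1} applied to $L_{2}\lhd G_{2}$: either $L_{2}<K$, or $L_{2}$ contains a hyperbolic element and acts minimally on $T$. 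Since $G_{1}$ is a residually finite free product, $L_{1}$ is a non-trivial finitely generated normal subgroup of $G_{1}$ — finitely generated by Proposition~\ref{Proposition fg} — so by \cite{Baumslag} it has finite index in $G_{1}$; this already forces $G_{1}/L_{1}$, hence $S/(L_{1}\times L_{2})\cong G_{1}/L_{1}$ by Remark~\ref{rem:finite index}, to be finite, landing us in the first bullet.

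The substance is therefore the case where $L_{1}$ fails to be of finite index in $G_{1}$, which by the above can only happen when $L_{2}<K$, i.e. $L_{2}$ is central in a finite-index subgroup of $G_{2}$ (using Lemma~\ref{Lemma WPD}, which gives a finite-index $G_{2}'<G_{2}$ with center $K$). In this regime I would first apply Proposition~\ref{Proposition 1} with the roles arranged so that $L_{1}\not<K$ (note the symmetry: a residually finite free product acts on a tree with trivial edge stabilisers, so the relevant $K$ on the $G_{1}$-side is trivial and $L_{1}$ automatically satisfies the hypothesis whenever it is non-trivial). Proposition~\ref{Proposition 1} then yields an element $y\in G_{2}$ with $\langle L_{2},y\rangle$ finitely generated, and symmetrically a finitely generated cyclic extension of $L_{1}$. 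The goal is to show $G_{2}/L_{2}$ is virtually cyclic: since $L_{2}<K$ and $K$ is infinite cyclic (or trivial), and $\langle L_{2},y\rangle$ is finitely generated with $L_{2}$ of the form computed from the HNN data, one argues that $L_{2}$ has finite index in $K$, so $G_{2}/L_{2}$ is virtually $G_{2}/K$, which acts faithfully and acylindrically on $T$. Combining with $G_{1}/L_{1}\cong G_{2}/L_{2}$ and the fact that $G_{1}/L_{1}$ is a quotient of the free product $G_{1}$, I would deduce that $G_{2}/L_{2}$ is virtually $\mathbb{Z}$, giving the homomorphism $f\colon H_{1}\times H_{2}\to\mathbb{Z}$ of the second bullet, with $H_{i}<_{fi}G_{i}$ the preimages making the map well defined on a finite-index subgroup and $S$ virtually $\ker f$.

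The main obstacle I anticipate is controlling $L_{1}$ in this second bullet: showing that $L_{2}<G_{2}$ is finitely generated and that $L_{1}$ is either finitely generated or a free product of finitely generated groups in $\mathcal{A}$. The finite generation of $L_{2}$ should follow from $L_{2}$ being finite-index in the cyclic group $K$, but $L_{1}$ is a normal subgroup of the free product $G_{1}$ with $G_{1}/L_{1}$ virtually $\mathbb{Z}$, so its structure must be extracted from the action of $L_{1}$ on the Bass--Serre tree of $G_{1}$: since $G_{1}/L_{1}$ is virtually cyclic and infinite, $L_{1}$ is not of finite index, so by a Kurosh-type / Bass--Serre analysis (as in Proposition~\ref{Proposition 0}, but now on the free-product side) $L_{1}$ inherits a free product decomposition with finitely generated factors lying in $\mathcal{A}$, the factors being intersections of $L_{1}$ with conjugates of the free factors of $G_{1}$. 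Pinning down exactly when $L_{1}$ is itself finitely generated versus an infinite free product — the alternative recorded in the statement — is where I expect the delicate bookkeeping with the HLS spectral sequence or a direct finiteness-property argument (as in the $P_{4}\times P_{4}$ example recalled in Section~\ref{Section 2}) to be needed.
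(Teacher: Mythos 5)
Your proposal fails at its central step: you apply Proposition~\ref{Proposition fg} with the two factors interchanged. In that proposition the factor admitting the free product decomposition plays the role of $K$; the hypothesis is that $S$ meets $K$ non-trivially, and the conclusion is that the intersection of $S$ with the \emph{other} factor $A$ is finitely generated. In Theorem~\ref{Theorem case 2} the free product is $G_1$, so Proposition~\ref{Proposition fg} (using $L_1\neq 1$) yields that $L_2=S\cap G_2$ is finitely generated; it gives no information whatsoever about $L_1$. Consequently your assertion that $L_1$ is finitely generated, hence of finite index in $G_1$ by \cite{Baumslag}, so that the second bullet ``can only happen when $L_2<K$'', is false. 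Concretely, take $G_1=F_2$, $G_2=P_4$, and let $S$ be the kernel of the homomorphism $F_2\times P_4 \to \mathbb{Z}$ sending every generator to $1$. The defining graph of $F_2\times P_4$ is the join of two isolated vertices with a path, whose flag complex is simply connected, so $S$ is finitely presented by \cite{Bestvina}. Here $L_2=\ker(P_4\to\mathbb{Z})$ is free of rank $3$ and is not contained in $K=\{1\}$, yet $L_1=\ker(F_2\to\mathbb{Z})$ is an infinitely generated normal subgroup with $G_1/L_1\cong\mathbb{Z}$. This is exactly the phenomenon the theorem is designed to capture --- its second bullet explicitly allows $L_1$ to be an infinite free product of finitely generated groups --- and your reduction declares this main case impossible, so the rest of your plan never addresses it.

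The paper's proof runs the other way around. The case $L_2<K$ is the easy one: after passing to the finite index subgroup given by Lemma~\ref{Lemma WPD}, the subgroup $L_2$ is cyclic and central, and $S$ is $L_2\times G_1$; no analysis of $G_2/L_2$ is needed there (nor would your proposed route work: you aim to show $G_2/L_2$ is virtually $G_2/K$ and hence virtually $\mathbb{Z}$, but $G_2/K$ acts faithfully and acylindrically on $T$ and is in general far from virtually cyclic, and ``quotient of a free product'' imposes no restriction at all). In the substantial case $L_2\not<K$, the paper argues: $L_2$ is finitely generated by Proposition~\ref{Proposition fg}; Proposition~\ref{Proposition 1}, applied with $G=G_2\in\mathcal{G}$ and $G^\prime=G_1$, produces $e\in G_1$ such that $\langle L_1,e\rangle$ is finitely generated (note that your own invocation of Proposition~\ref{Proposition 1} with $G=G_1$ is also not licensed, since that proposition requires the acting factor to lie in $\mathcal{G}$, not to be an arbitrary finitely generated residually finite free product); then \cite[Theorem 4.1]{something1} shows that the finitely generated subgroup $\langle L_1,e\rangle$, which contains the non-trivial normal subgroup $L_1$, has finite index in $G_1$, so $G_1/L_1$ is finite or virtually $\mathbb{Z}$; and Proposition~\ref{Proposition 0} shows $G_2/L_2$ is finite or virtually cyclic. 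Combining these via $L_1\times L_2<S<G_1\times G_2$ gives that $S/(L_1\times L_2)$ is virtually abelian, and the Kurosh-type induction you sketch at the end is then used, as in the paper, only to identify $L_1$ as a free product of finitely generated groups in $\mathcal{A}$ when it is not finitely generated.
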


\begin{proof}
Let us define $L_{1}$ and $L_{2}$ to be $S \cap G_{1}$ and $S\cap G_{2}$, respectively. There are short exact sequences
\[ 
\begin{tikzcd}
1 \arrow{r}  & L_{i} \arrow{r} & S \arrow{r} & G_{j} \arrow{r} & 1,
\end{tikzcd} 
\]
for $i\neq j \in \{1,2\}$, so if $L_{i}$ is trivial for some $i\in \{1,2\}$, $S$ is isomorphic to $G_{j}$ for $j\neq i\in \{1,2\}$.

Now suppose that $L_{1}$ and $L_{2}$ are non-trivial. Let $T$ be the Bass-Serre tree corresponding to a standard splitting of $G_{2}$ and let $K=\langle c \rangle$ be the kernel of the action of $G_{2}$ on $T$. By Lemma~\ref{Lemma WPD} we have that $G_{2}$ has a relative WPD element and that there is a subgroup of finite index, say $K_{2}$, with center $\langle c \rangle$. Then, $S \cap K_{2}$ has finite index in $S$, so it suffices to show that $S\cap K_{2}$ is virtually $H$-by-(free abelian), where $H$ is the direct product of two groups in $\mathcal{A}$. That is, we may assume that $G_{2}= K_{2}$. By Lemma~\ref{Lemma 1}, either $L_{2}< K$ or $L_{2}$ acts minimally on $T$. If $L_{2}< K$, since there is a short exact sequence
\[ 
\begin{tikzcd}
1 \arrow{r}  & L_{2} \arrow{r} & S \arrow{r} & G_{1} \arrow{r} & 1,
\end{tikzcd} 
\]
and $L_{2}$ lies in the center of $K_{2}$, which is cyclic (possibly trivial), we have that $S$ is $L_{2} \times G_{1}$ where $L_2$ is cyclic.

We now need to deal with the case when $L_{2}$ is not contained in $K$. By Proposition~\ref{Proposition fg}, $L_{2}$ is finitely generated. Moreover, there is $e\in G_{1}$ such that $\langle L_{1}, e\rangle$ is finitely generated (see Proposition~\ref{Proposition 1}).

First, by \cite[Theorem 4.1]{something1}, $\langle L_{1}, e\rangle$ has finite index in $G_{1}$. Therefore, $G_{1} \slash L_{1}$ is finite or virtually $\mathbb{Z}$. Second, $L_{2}$ is a finitely generated normal subgroup of $G_{2}$ which is not contained in $K$, so from Proposition~\ref{Proposition 0} we get that $G_{2}\slash L_{2}$ is finite or virtually cyclic. To sum up, since we have that \[ L_{1}\times L_{2} < S < G_{1} \times G_{2},\]
and $(G_{1}\times G_{2}) \slash (L_{1} \times L_{2})$ is virtually abelian, so is $S \slash (L_{1}\times L_{2})$.

Let us now describe the structure of $H=L_1 \times L_2$. Assume that $H$ is not finitely generated. Since $H=L_1\times L_2$ and $L_2$ is finitely generated, it follows that $L_1$ is not finitely generated. Then, we have that $G_1\slash L_1$ is virtually $\mathbb Z$. We claim that in this case $L_1$ is a free product of finitely generated groups from $\mathcal A$. We prove it by induction on the height of the group $G_1$ in the class $\mathcal A$. Since $L_1 < G_1$ and as $G_1$ has, by assumption, a free product decomposition, $L_1$ is a free product of groups of the form $L_1 \cap G_v$ and a free group, where $G_v$ is a vertex stabiliser of the standard splitting of $G_1$. If $G_1\in \mathcal G$, then $G_v$ is a finitely generated free abelian group and so is the intersection with $L_1$. Assume that $G_1$ is of height $k\geq 1$. In this case, $G_v = \mathbb Z^n \times (G_{v_{1}} \ast \dots \ast G_{v_{n}})$ and $G_{v_{i}} \in \mathcal A$ are of height less than $k$, $i\in \{1,\dots,n\}$. Since by the induction hypothesis we have that $L_1 \cap G_{v_{i}}$ is the free product of finitely generated groups from $\mathcal A$, the same holds for $L_1 \cap G_v$.
\end{proof}

\subsection{The groups $p_{1}(S)$ and $p_{2}(S)$ are of type (2).}
\label{Subsection 3}

By Remark~\ref{rem:no center}, we can assume that the groups belong to $\mathcal G$, and by the previous cases, that they are freely indecomposable. Therefore, it suffices to prove the following:

\begin{theorem}\label{Theorem case 3}
Let $G_{i}$ be a freely indecomposable finitely generated group in $\mathcal{G}$, for $i\in \{1,2\}$. Let $S$ be a finitely presented subdirect product of $G_{1}\times G_{2}$. Then, $S$ is virtually $H$-by-(free abelian), where $H$ is the direct product of two groups in $\mathcal{G}$.

More precisely, $H$ is equal to $L_1 \times L_2$, where $L_i = S\cap G_i$, $i\in \{1,2\}$ and either
\begin{itemize}
    \item $L_1 \times L_2 <_{fi} S <_{fi} G_1 \times G_2$, or
    \item $S$ is virtually the kernel $\ker f$ where $f\colon H_1 \times H_2 \mapsto \mathbb Z$ for some $H_i \in \mathcal A$, $i\in \{1,2\}$, or
    \item $S$ is virtually the kernel $\ker f$ where $f\colon H_1 \times H_2 \mapsto \mathbb{Z}^2$ for some $H_i\in \mathcal A$, $i\in \{1,2\}$. In this case, $L_i$ is the free product of finitely generated groups in $\mathcal A$ for $i\in \{1,2\}$.
\end{itemize}
\end{theorem}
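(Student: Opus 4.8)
The plan is to control the common quotient
\[
Q \;:=\; G_1/L_1 \;\cong\; S/(L_1\times L_2)\;\cong\; G_2/L_2
\]
provided by Remark~\ref{rem:finite index} (writing $\phi_i\colon G_i\twoheadrightarrow Q$ for the two quotient maps), and to prove that $Q$ is virtually free abelian of rank at most $2$; the short exact sequence $1\to L_1\times L_2\to S\to Q\to 1$ then yields the \emph{$H$-by-(free abelian)} conclusion with $H=L_1\times L_2$, and the three bullets correspond to $Q$ being finite, virtually $\mathbb Z$, or virtually $\mathbb Z^2$. Note that $L_i\in\mathcal G$ automatically, since $\mathcal G$ is closed under subgroups. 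First I would invoke Lemma~\ref{Lemma WPD} to reduce, after passing to a finite-index subgroup, to the case in which each $G_i$ has centre $K_i=\langle c_i\rangle$ equal to the kernel of its action on the Bass--Serre tree $T_i$. The degenerate cases are then disposed of directly and yield the coarse form of the statement: if some $L_i$ is trivial then $S\cong G_j\in\mathcal A$, while if some nontrivial $L_i$ lies in $K_i$ then $L_i$ is central and cyclic and $S$ is seen, as in the proof of Theorem~\ref{Theorem case 2}, to be virtually a direct product of two groups in $\mathcal G$. Hence the substance of the theorem is the \emph{main case}, where $L_1$ and $L_2$ are nontrivial and neither is contained in $K_i$; by Lemma~\ref{Lemma 1} each $L_i$ then contains a hyperbolic element and acts minimally on $T_i$.

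In the main case I would apply Proposition~\ref{Proposition 1} in both directions. Taking $G=G_2$, $G'=G_1$ (legitimate since $L_2\not\subseteq K_2$) produces $x\in G_1$ with $\tilde L_1:=\langle L_1,x\rangle$ finitely generated; symmetrically one obtains $\tilde L_2:=\langle L_2,y\rangle$ finitely generated. Crucially, inspecting the proof of Proposition~\ref{Proposition 1}, the element $x$ is the $G_1$-coordinate of a lift of a generator of (a finite-index subgroup of) the edge group $C_2$ of $G_2$, so its image $\bar x\in Q$ lies in $\bar C_2:=\phi_2(C_2)$. Since $\tilde L_1\supseteq L_1$ still acts minimally on $T_1$ and is finitely generated, it acts cocompactly, so $\tilde L_1\backslash T_1$ is finite by \cite[Proposition~7.9]{Bass} and hence $|\tilde L_1\backslash G_1/C_1|<\infty$. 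Projecting this finite double-coset decomposition into $Q=G_1/L_1$, and using that the edge groups $C_i$ contain the central subgroups $K_i$ with finite index (so that $\bar C_1,\bar C_2$ are virtually central and cyclic), I would conclude that $Q$ is covered by finitely many cosets of the product $\bar C_1\bar C_2$ of two cyclic subgroups.

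The main obstacle is to pass from this covering to virtual abelianness of $Q$: this is exactly the Itô-type statement ``in the presence of cosets'' raised as a Question in the introduction, which we cannot prove in general. The plan is to exploit that our two cyclic subgroups come from cyclic \emph{edge} groups. I would lift the covering to the two-generator subgroup $\langle c_1,x\rangle\leq G_1$, where $c_1$ generates $C_1$ and $\phi_1(x)$ generates $\bar C_2$; by the graph-of-groups structure of $G_1$ (cyclic edge groups, with $c_1$ elliptic and $x$ hyperbolic along an axis through the corresponding edge) this subgroup is a Baumslag--Solitar group $BS(m,n)$, and cyclic subgroup separability forces $|m|=|n|$ as in Lemma~\ref{Lemma WPD} via \cite[Corollary~7.7]{Levitt}. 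Pulling back the covering shows that $BS(m,n)$ is itself covered by finitely many cosets of the product $\langle c_1\rangle\langle x\rangle$ of two cyclic subgroups, and here the explicit normal form in Baumslag--Solitar groups can be used to force $|m|=|n|=1$, i.e.\ the subgroup is virtually abelian. It follows that $Q$ is virtually abelian, and, being covered by finitely many cosets of a product of two cyclic subgroups, it has rank at most $2$.

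Finally I would read off the trichotomy from the rank of $Q$: rank $0$ gives $L_1\times L_2<_{fi}S<_{fi}G_1\times G_2$; rank $1$ gives $S$ virtually the kernel of a map $H_1\times H_2\to\mathbb Z$; and rank $2$ gives $S$ virtually the kernel of a map $H_1\times H_2\to\mathbb Z^2$. In the last case, since $Q$ is infinite, the argument of Proposition~\ref{Proposition 0} shows that $L_i$ must intersect every edge group of $G_i$ trivially (otherwise $L_i\cap C_i\neq 1$ would give $L_i$ finite index and $Q$ finite); the Bass--Serre structure then induces on each $L_i$ a decomposition as a free product of its intersections with the (free abelian) vertex groups together with a free group, all of which are finitely generated groups in $\mathcal A$, as required.
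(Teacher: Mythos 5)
Your reduction to the common quotient $Q$, the treatment of the degenerate cases, and the use of Proposition~\ref{Proposition 1} together with \cite[Proposition 7.9]{Bass} to cover $Q$ by finitely many double cosets all match the paper's strategy. The first genuine gap is the step where you convert the double cosets $\langle \bar x\rangle \bar z_j \bar C_1$ into cosets of the fixed product $\bar C_1\bar C_2$: your justification, that the edge groups $C_i$ contain the kernels $K_i$ with finite index, is false. For $P_{4}=\langle a,b\rangle \ast_{\langle b \rangle} \langle b,c \rangle  \ast_{\langle c \rangle} \langle c,d \rangle$ the kernel of the action on the Bass--Serre tree (equivalently, the centre of the finite-index subgroup produced by Lemma~\ref{Lemma WPD}) is trivial, while the edge groups are infinite cyclic. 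Rewriting $\langle\bar x\rangle\bar z_j\bar C_1=\langle\bar x\rangle\,\bigl({}^{\bar z_j}\bar C_1\bigr)\,\bar z_j$ introduces images of \emph{conjugate} edge stabilisers, and there is no a priori reason these are commensurable with $\bar C_1$ inside $Q$. The paper confronts exactly this point with a dichotomy: either some vertex-group element $a$ satisfies $a^n\neq c^m$ in $Q$ for all $n,m$, in which case a double pigeonhole argument shows directly that $L_2A$ has finite index (no fixed product is ever needed), or every vertex element is dependent on the edge generator modulo $L_2$, and only under that standing assumption can conjugated edge groups be traded for a fixed one. Your proposal silently assumes the second horn.

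Second, and more seriously, the Baumslag--Solitar step fails. There is no reason that $\langle c_1,x\rangle\leq G_1$ is a Baumslag--Solitar group: $x$ is merely the $G_1$-coordinate of a lift of an element of $G_2$, you have no control over whether it is hyperbolic in $T_1$, and no relation $x^{-1}c_1^mx=c_1^n$ holds in $G_1$. In the paper, the Baumslag--Solitar relation is obtained (in mirror notation, with $e,c_1$ in $G_2$) only \emph{modulo} $L_2$, by combining an honest HNN relation $e^{-1}c_1e=c_2$ with the standing dependence assumption; what appears is therefore a \emph{quotient} $BS(m,n)\slash N$ inside $Q$, not a Baumslag--Solitar subgroup of the cyclic subgroup separable group $G_i$. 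Consequently you cannot invoke \cite[Corollary 7.7]{Levitt} to force $|m|=|n|$: cyclic subgroup separability does not pass to quotients, and the non-unimodular case genuinely must be dealt with. The paper does so via Lemma~\ref{Lemma BS}: when $m\neq\pm n$, the structure of $H_1(\langle\langle x\rangle\rangle;\mathbb{Z})\cong\mathbb{Z}\left[\frac{1}{mn}\right]$ forces $x^q\in N$ for some $q\neq 0$, i.e.\ $c_1^q\in L_2$, whence $Q$ is virtually cyclic. This branch is the technical heart of the proof and is entirely missing from your argument. (Two smaller inaccuracies: a covering by finitely many cosets of a product of two cyclic subgroups only forces a Baumslag--Solitar group to be \emph{virtually} abelian, not $|m|=|n|=1$, since $BS(1,-1)=\langle x\rangle\langle t\rangle$; and in your rank-two read-off, $L_i\cap C_i\neq 1$ yields $Q$ virtually cyclic from the coset decomposition, not ``$L_i$ of finite index and $Q$ finite'' --- Proposition~\ref{Proposition 0} cannot be applied there because $L_i$ is not known to be finitely generated.)
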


\begin{proof}
Let $p_1\colon S \mapsto G_{1}$ and $p_2\colon S\mapsto G_{2}$ be the natural projection maps and let $L_1=S \cap G_{1}$ and $L_2=S \cap G_{2}$. Let us deal with the case when $L_{1}$ or $L_{2}$ is trivial. There are short exact sequences
\[ 
\begin{tikzcd}
1 \arrow{r}  & L_{i} \arrow{r} & S \arrow{r} & G_{j} \arrow{r} & 1,
\end{tikzcd} 
\]
for $i\neq j \in \{1,2\}$, so if $L_{i}$ is trivial for some $i\in \{1,2\}$, $S$ is isomorphic to $G_{j}$ for $j\neq i\in \{1,2\}$.

Now, suppose that $L_{1}$ and $L_{2}$ are non-trivial. For $i\in \{1,2\}$, let $T_{i}$ be the Bass-Serre tree corresponding to a standard splitting of $G_{i}$ and let $K_{i}= \langle c_{i} \rangle$ be the kernel of the action of $G_{i}$ on $T_{i}$. By Lemma~\ref{Lemma 1}, there is a finite index subgroup, say $H_{i}$, in $G_{i}$ with center $\langle {c_{i}} \rangle$ and $G_{i}$ has a relative WPD element. Since $H_{i}$ has finite index in $G_i$, we may assume that $G_{i}= H_{i}$. 

By Lemma~\ref{Lemma 1}, either $L_{i}< K_{i}$ or $L_{i}$ acts minimally on $T_{i}$. If $L_{i} <K_{i}$ for some $i\in \{1,2\}$, say $L_1 < K_1$, then $L_{1}$ is contained in the center of the group $H_{1}$. Therefore, $S$ is virtually $L_{1} \times G_{2}$, where $L_1$ is cyclic.

We now deal with the case when $L_{i}$ is not contained in $K_{i}$, for $i\in \{1,2\}$. In this case, we show that $G_{2}\slash L_{2}$ is finite or virtually free abelian. A symmetric argument shows that $G_{1} \slash L_{1}$ is finite or virtually free abelian. Therefore, since \[L_{1}\times L_{2} < S < G_{1} \times G_{2},\] $S \slash (L_{1} \times L_{2})$ is virtually free abelian.

Let $T= T_{2}$ be the Bass-Serre tree associated to a standard splitting of $G_2$. By Lemma~\ref{Lemma 1}, $L_{2}$ acts minimally on $T$. By Proposition~\ref{Proposition 1}, there is $e\in G_{2}$ such that $\langle L_{2}, e \rangle$ is finitely generated. This group contains $L_{2}$, so it also acts minimally on $T$. Furthermore, it is finitely generated, so by \cite[Proposition 7.9]{Bass}, the graph $\langle L_{2},e\rangle \backslash T$ is finite. Then, for every cyclic edge stabilizer $\Gamma_{e}$, we have that \[\big{|} \langle L_{2}, e \rangle \backslash G_{2} \slash \Gamma_{e} \big{|} \] is finite since it is bounded above by the number of edges in the graph $\langle L_{2},e\rangle \backslash T$. Then, there are $z_{1},\dots,z_{m}\in G_{2}$ such that 

\begin{equation}\label{eq:decomposition}
 G_{2}= \dot{\bigcup}_{j\in \{1,\dots,m\}} \langle  L_{2},e \rangle z_{j}\Gamma_{e}.
\end{equation}

Suppose that $C= \langle c \rangle$ is a cyclic edge stabilizer and $C < A$ is a free abelian vertex stabilizer. 

Assume first that there is $a\in A < G_2$ such that $c^n \neq a^m$ in $G_{2}\slash L_{2}$ for all $n,m \in \mathbb{Z}$.

Since the set $\{z_{1},\dots,z_{m}\}$ is finite, there are two different powers of $a$ that are in the same double coset, that is, there exist $n_{1}\neq n_{2}\in \mathbb{N}$ and $j\in \{1,\dots,m\}$ such that
\[ a^{n_{1}}= l_{2}e^{k_{1}}z_{j}c^{m_{1}}\quad \text{and} \quad a^{n_{2}}= {l_{2}}^\prime e^{k_{2}}z_{j} c^{m_{2}},\] for some $l_{2},{l_{2}}^\prime \in L_{2}$, $k_{1},m_{1},k_{2},m_{2}\in \mathbb{Z}$.

Equating $z_{j}$ in the previous equations and using the fact that $L_{2}$ is normal in $G_{2}$, we deduce that there is $l\in L_{2}$ such that
\[ l e^{k_{2}-k_{1}}= a^{n_{2}}c^{m_{1}-m_{2}}a^{-n_{1}}.\]
Note that, by the standing assumption $c^n\ne a^m$ in $G_2\slash L_2$, we have that $k_{1}\neq k_{2}$ (and so without loss of generality, we assume that $k_{2}> k_{1}$). Indeed, otherwise, $a^{n_{1}-n_{2}}$ would be equal to $c^{m_{1}-m_{2}}$ modulo $L_{2}$. Since $a,c \in A$, we deduce that $e^{k_{2}-k_{1}}$ is congruent to an element of $A$, say $a^\prime$, modulo $L_{2}$. If we define $z_{s,j}$ to be $e^s z_{j}$, for $s\in \{0,\dots,k_{2}-k_{1}-1\}$ and $j\in \{1,\dots,m\}$, we have that
\[ G_{2}= \dot{\bigcup_{j\in \{1,\dots,m\}, s\in \{0,\dots,k_{2}-k_{1}-1\}}}\big{\langle} L_{2} ,e^{k_{2}-k_{1}}\big{\rangle} z_{s,j} \langle c \rangle =\]
\[\dot{\bigcup_{j\in \{1,\dots,m\}, s\in \{0,\dots,k_{2}-k_{1}-1\}}}\big{\langle} L_{2}, a^\prime  \big{\rangle} z_{s,j} \langle c \rangle.\]

The next goal is to obtain a decomposition of $G_{2}$ as a disjoint union of single cosets.

The element $c^t z_{s,j}$ lies in $G_{2}$ for all $t\in \mathbb{Z}$, $j\in \{1,\dots,m\}, s\in \{0,\dots,k_{2}-k_{1}-1\}.$ Therefore, there are distinct natural numbers $t_{1}$ and $t_{2}$ and $s_{0}\in \{0,\dots,k_{2}-k_{1}-1\}$, $j_{0} \in \{1,\dots,m\}$ such that
\[ c^{t_{1}} z_{s,j}=l (a^\prime)^{s_{1}} z_{s_{0},j_{0}} c^{m_{1}} \quad \text{and} \quad \] \[ c^{t_{2}} z_{s,j}=l^\prime (a^\prime)^{s_{2}} z_{s_{0},j_{0}} c^{m_{2}}, \]for some $l, l^\prime \in L_{2}$, $s_{1},m_{1},s_{2},m_{2}\in \mathbb{Z}$.

Equating the $z_{s_0, j_0}$ and using the normality of $L_2$, we deduce that there is $l^{\prime \prime}\in L_{2}$ such that
\[ l^{\prime \prime}c^{-t_{2}} (a^\prime)^{s_{2}-s_{1}} c^{t_{1}} z_{s,j}= z_{s,j} c^{m_{1}-m_{2}}.\]
Denote the element $c^{-t_{2}} (a^\prime)^{s_{2}-s_{1}} c^{t_{1}}\in A$ by $a^{\prime \prime}\in A$. Then the previous equation is of the form
\[
l^{\prime \prime} a^{\prime \prime} z_{s,j}= z_{s,j}c^{m_1-m_2},
\]
where $l^{\prime \prime}\in L_2$ and $a^{\prime \prime}\in A$. Again, by the standing assumption, we have that $m_{1}$ and $m_{2}$ are different and by taking further cosets as we did in the previous case, there are $f_{1},\dots,f_{r}\in G_{2}$ such that
\[ G_{2}= \dot{\bigcup}_{j\in \{1,\dots,r\}} \langle  L_{2},a^{\prime}, a^{\prime \prime} \rangle f_{j}.\]

Summarizing, the subgroup $\langle L_{2}, a^{\prime}, a^{\prime \prime} \rangle $ has finite index in $G_{2}$ and so $L_{2}A$ has finite index in $G_{2}$. Moreover, by the Second Isomorphism Theorem, $L_{2}A \slash L_{2}$ is isomorphic to $A \slash (A \cap L_{2})$, and since $A$ is abelian, so is $L_{2}A \slash L_{2}$. Therefore, under the standing assumption, we have that $G_{2} \slash L_{2}$ is virtually free abelian.

We now deal with the case when for each vertex stabilizer $A$ and each edge stabilizer $C=\langle c \rangle < A$, for each $a\in A$ there are $n,m\in \mathbb{Z}$ such that 

\begin{equation}\label{eq:first}
    a^n=c^m \quad \text{in} \quad G_{2} \slash L_{2}.
\end{equation}

In particular, for any two edge stabilizers $\langle c_{1} \rangle$ and $\langle c_{2} \rangle$, there are $w_{1},w_{2}\in \mathbb{Z}$ such that ${c_{1}}^{w_{1}}= {c_{2}}^{w_{2}}$ modulo $L_{2}$.

Recall the double coset splitting of $G_{2}$,

\begin{equation}\label{eq:second}
G_{2}= \dot{\bigcup}_{j\in \{1,\dots,m\}} \langle  L_{2},e \rangle z_{j} \Gamma_{e}. 
\end{equation}

Suppose that $\Gamma_{e}= \langle \gamma_{e} \rangle$. Note that for each $j\in \{1,\dots,m\}$, \[\langle L_{2}, e \rangle z_{j} \langle \gamma_{e} \rangle = \langle L_{2}, e \rangle z_{j} \langle \gamma_{e} \rangle z_{j}^{-1}z_{j}= \langle L_{2}, e \rangle \langle {\gamma_{e}}^{{z_{j}}^{-1}} \rangle z_{j}.\]
Observe that $\langle {\gamma_{e}}^{{z_{j}}^{-1}} \rangle$ is an edge stabilizer, so by assumption, there are two numbers $n_{j},m_{j}\in \mathbb{Z}$ such that \[ \big{(}{\gamma_{e}}^{{z_{j}}^{-1}}\big{)}^{n_{j}}= {\gamma_{e}}^{m_{j}} \quad \text{in} \quad G_{2}\slash L_{2}.\]
We again take cosets as in the previous cases to get that

\begin{equation}\label{eq:third}
    G_{2}= \dot{\bigcup}_{j\in \{1,\dots,m\}} \langle  L_{2},e, \gamma_{e} \rangle q_{j},
\end{equation}
for some $q_{1},\dots,q_{m}\in G_{2}$.

Let us distinguish two cases. First, suppose that $e$ is elliptic. Then, $e\in A$ for a vertex stabilizer $A$. Let $\langle c \rangle$ be an edge stabilizer such that $c\in A$. We are now in the same situation as in the previous case since $e,c\in A$: $L_{2}A$ has finite index in $G_{2}$ and so we have again that $G_{2} \slash L_{2}$ is virtually free abelian.

Finally, we need to deal with the case when $e$ is hyperbolic. 

By \cite[Corollary 3.2]{Bridson2}, there is $M$ a finite index subgroup of $G_{2}$ such that $M$ is an HNN extension with stable letter $e$ and amalgamated subgroup $M \cap C$, where $C$ is an edge stabilizer of $T$. If $c$ is a generator of $C$, then $M\cap C= \langle c^r \rangle$ for some $r\in \mathbb{N} \cup \{0\}$. Let us denote $c^r$ by $c_{1}$. Since $M$ has finite index in $G_{2}$, $M$ is finitely presented and admits a presentation of the form \[ \langle b_{1},\dots,b_{k},c_{1},e,c_{2} \mid \mathcal{R}, e^{-1}c_{1}e=c_{2}\rangle,\] where $\mathcal{R}$ is a set of relations in the words $\{b_{1},\dots,b_{k},c_{1},c_{2}\}$ and $c_{2}$ is a power of a generator of another edge stabilizer in $T$. Then, we are in the following situation:
\[\begin{tikzpicture}
  \node (max) at (0,0) {$G_2$};
  \node (a) at (0,-1) {$ML_{2}$};
  \node (b) at (-2,-2) {$L_{2}$};
  \node (c) at (2,-2) {$M$};
  \node (d) at (2,-3) {$\langle e,c_{1} \rangle (M \cap L_{2})$};
   \node (f) at (0,-4) {$M \cap L_{2}$};
  \draw (max) -- (a) -- (b)
  (a)--(c)
  (c)--(d)
  (b) -- (f)
  (d)--(f);
\end{tikzpicture}
\]

From the double coset representation \eqref{eq:third} applied to $c_1$, we have that
\begin{equation}\label{eq:four}
    G_{2}= \dot{\bigcup}_{j\in \{1,\dots,m\}} \langle  L_{2},e, c_{1} \rangle q_{j}.
\end{equation}
The group $L_{2}$ is a subgroup of $ML_{2}$ and the elements $e,c_{1}$ belong to $M< ML_{2}$. Then, there are $z_{1},\dots, z_{t}\in ML_{2}$ such that
\[ ML_{2}= \dot{\bigcup}_{j\in \{1,\dots,t\}} \langle  L_{2},e, c_{1} \rangle z_{j}.\] Thus, there are $\overline{z_{1}},\dots, \overline{z_{t}} \in M$ such that
\[ M= \dot{\bigcup}_{j\in \{1,\dots,t\}} \langle  M \cap L_{2},e,c_{1} \rangle \overline{z_{j}}.\]
Assume that the elements $\overline{z_{1}},\dots, \overline{z_{s}}$ belong to $\langle e, c_{1}\rangle (M \cap L_{2})$ and that $\overline{z_{s+1}},\dots, \overline{z_{t}}$ do not belong to $\langle e,c_{1} \rangle (M \cap L_{2})$. Then, we have the double coset decomposition of $\langle e, c_{1}\rangle (M \cap L_{2})$:
\begin{equation}\label{eq:five}
    \langle e, c_{1} \rangle (M \cap L_{2})= \dot{\bigcup}_{j\in \{1,\dots,s\}} \langle  M \cap L_{2},e,c_{1} \rangle \overline{z_{j}}.
\end{equation}

By the standing assumption \eqref{eq:first}, we have that $e^{-1}c_{1}^m e= c_{1}^n$ modulo $L_{2}$ for some $m,n\in \mathbb{Z}$. Therefore, the group $\langle e, c_{1} \rangle (M \cap L_{2}) \slash (M \cap L_{2})$ is isomorphic to the quotient of the Baumslag-Solitar group \[BS(m,n) = \langle x, t \mid t^{-1}x^m t =x^n\rangle.\] That is, there is $N$ a normal subgroup of $BS(m,n)$ and an isomorphism \[f \colon \langle e, c_{1} \rangle (M \cap L_{2}) \slash (M \cap L_{2}) \mapsto BS(m,n) \slash N\] with $f(e M \cap L_{2})= tN, f( c_{1} M \cap L_{2})= x N.$

By the decomposition given in \eqref{eq:five}, there are elements $a_{1},\dots,a_{s}\in BS(m,n)$ such that
\begin{equation} \label{eq:six}
    BS(m,n)= \dot{\bigcup}_{j\in \{1,\dots,s\}} N \langle t, x \rangle a_{j}.
\end{equation}
If $m$ is equal to $n$, ${c_{1}}^m$ commutes with $e$ modulo $L_{2}$. Then, from the decomposition given in \eqref{eq:four}, we have that $G_{2} \slash L_{2}$ is virtually free abelian. If $m$ equals $-n$, ${c_{1}}^m$ commutes with $e^2$ modulo $L_{2}$. From the decomposition \eqref{eq:four} we get that there are elements $r_{1},\dots,r_{g}\in G_{2}$ such that
\[ G_{2}= \dot{\bigcup}_{j\in \{1,\dots,g\}} \langle  L_{2},e^2, {c_{1}}^m \rangle r_{j},\] so $G_{2} \slash L_{2}$ is virtually abelian.

Let us deal with the case when $m$ is not equal to $\pm n$. In this case, our aim is to show that there is $q\in \mathbb{N}\setminus \{0\}$ such that $x^q \in N$ and using the isomorphism $f$, we deduce that ${c_{1}}^q \in M \cap L_{2} < L_{2}$. Notice that if ${c_{1}}^q \in L_2$ for $q\ne 0$, then it follows from the decomposition \eqref{eq:four} that  $G_2 \slash L_2$ is virtually cyclic.

Note that we can assume that $\gcd(m,n)=1$. Otherwise, if $\gcd (m,n)=d$, then $m= d m^\prime$ and $n= dn^\prime$ for some $m^\prime, n^\prime \in \mathbb{Z}$ with $\gcd (m^\prime, n^\prime)=1$ and $e^{-1}({c_{1}}^d)^{m^\prime} e= ({c_{1}}^d)^{n^\prime}$ modulo $L_{2}$.

Let us denote the normal closure of $x$ in $BS(m,n)$ by $\langle \langle x \rangle \rangle$. Let us prove that for each $g\in \langle \langle x \rangle \rangle$ there is $S = S(g)\in \mathbb{N}$ such that $(x^S)^g= x^S.$ If $g\in \langle \langle x \rangle \rangle$, then \[g= (g_{0} x^{\pm 1} g_{0}^{-1})(g_{1} x^{\pm 1} g_{1}^{-1})\dots (g_{n} x^{\pm 1} g_{n}^{-1}),\] for some $g_{i}\in BS(m,n)$, $i\in \{0,\dots,n\}$. For each $i\in \{0,\dots,n\}$, let \[x_{i}= \max \big \{ \text{number of } t\text{'s in } g_{i}, \text{number of } t^{-1}\text{'s in } g_{i}\big \},\] and let $S= \max \big \{x_{i} \mid i\in \{1,\dots, n\} \big \}$. Then,
\[{\left( x^{|m|^S |n|^S}\right) }^g= x^{|m|^S |n|^S}.\]

We now consider two cases based on whether or not $N$ is contained in $\langle \langle x \rangle \rangle$.

Suppose that $N$ is not contained in $\langle \langle x \rangle \rangle$, that is, there is an element $h\in N$ which is not in $\langle \langle x \rangle \rangle$. Since $BS(m,n)= \langle \langle x \rangle \rangle \langle t \rangle$, we can write $h= g t^k$ for some $g\in \langle \langle x \rangle \rangle$ and $k\in \mathbb{Z}\setminus \{0\}$. By the previous paragraph, there is $S$ such that ${\left( x^{|m|^S |n|^S}\right) }^g= x^{|m|^S |n|^S}$. Let us take $M$ to be $\max \{S, |k| \}$.

If $m$ and $n$ are greater than $0$, then
\[{\left( x^{m^M n^M}\right)}^h= x^{m^{M-k} n^k n^M} \text{ if } k>0, \quad {\left( x^{m^M n^M}\right)}^h= x^{n^{M-k} m^k m^M} \text{ if } k<0.\] So in $BS(m,n) \slash N$, \[x^{m^M n^M} N= x^{m^{M-k} n^k n^M} N \quad \text{or} \quad x^{m^M n^M} N= x^{n^{M-k} m^k m^M} N.\] That is,
\[x^{n^M m^{M-k}(m^k-n^k)}\in N \quad \text{or} \quad x^{m^M n^{M-k}(n^k-m^k)}\in N.\]
Since $m$ is not equal to $\pm n$, $m^k-n^k\neq 0$ and $n^k-m^k\neq 0$. So there is $q\in \mathbb{N}$ such that $x^q\in N$.

If $m<0$ and $n>0$, \[{(x^{|m|^M n^M})}^h= (x^{-1})^{|m|^{M-k} n^k n^M} \text{ if } k>0, \quad {(x^{|m|^M n^M})}^h= (x^{-1})^{n^{M-k} |m|^k |m|^M} \text{ if } k<0.\] Therefore, as in the previous case,
\[x^{n^M |m|^{M-k}(|m|^k-n^k)}\in N \quad \text{or} \quad x^{|m|^M n^{M-k}(n^k-|m|^k)}\in N.\] Since $m$ is not equal to $\pm n$, again we have that there is $q\in \mathbb{N}$ such that $x^q \in N$.

Therefore, we are left to consider the case when $N$ is a subgroup of $\langle \langle x \rangle \rangle$. Let us first show that $\langle \langle x \rangle \rangle \slash N$ is virtually cyclic. For that, we show that $\langle N , x \rangle$ has finite index in $\langle \langle x \rangle \rangle$. Let $g\in \langle \langle x \rangle \rangle$. By the decomposition \eqref{eq:six}, there is an element $n\in N$, some $m,k\in \mathbb{Z}$ and $j\in \{1,\dots,s\}$ such that $g= n t^m a_{j} x^k $. Observe that $n^{-1}gx^{-k}$ is an element of $\langle \langle x \rangle \rangle$ and so $t^m a_{j}$ also belongs to $\langle \langle x \rangle \rangle$. The sum of the powers of $t$ is $0$ in every element of $\langle \langle x \rangle \rangle$, so since $a_{j}$ is a fixed element, $m$ also needs to be a fixed number, say $k_j$. Therefore, from the above observation and the decomposition \eqref{eq:six} we get that
\[ \langle \langle x \rangle \rangle= \dot{\bigcup} N y_{i} \langle x \rangle ,\]
where $y_{i}= t^{k_{i}}a_{i} \in \langle \langle x \rangle \rangle$ and so $\langle \langle x \rangle \rangle \slash N$ is virtually cyclic.

Let us denote $\langle \langle x \rangle \rangle$ by $H$ and its commutator subgroup by $[H,H]$. Then, we are in the following situation:
\[\begin{tikzpicture}
  \node (max) at (0,0) {$\langle \langle x \rangle \rangle= H$};
  \node (a) at (0,-1) {$[H,H]\langle N, x \rangle$};
  \node (b) at (-2,-2) {$\langle N,x \rangle$};
  \node (d) at (-2,-3) {$N$};
  \node (e) at (3,-3) {$[H,H]$};
   \node (f) at (0,-2) {$[H,H] N$};
  \draw (max) -- (a) -- (b) -- (d)
  (a) -- (e)
  (f)--(d)
  (f)--(e)
  (a)--(f);
\end{tikzpicture}
\]
Since $H \slash N$ is virtually cyclic, then either $N$ has finite index in $\langle N, x \rangle$ or $\langle N, x \rangle \slash N$ is cyclic. In the former, we have that $x^q \in N$. If $\langle N, x \rangle \slash N$ is cyclic, we have that $[H,H] \langle N,x \rangle \slash [H,H]N$ is also cyclic and so $N$ has finite index in $[H,H]N$. 

By Lemma~\ref{Lemma BS}, we have that $H \slash [H,H]$ is isomorphic to $\mathbb{Z}\left[\frac{1}{mn}\right]$. The group $[H,H] N \slash [H,H]$ is a $\mathbb{Z}$-submodule of $\mathbb{Z}[\frac{1}{mn}]$. If $[H,H] N \slash [H,H]$ is trivial, then $N \subseteq [H,H]$ and as $H \slash [H,H]\cong \mathbb{Z}\left[\frac{1}{mn}\right]$ and $H\slash N$ is virtually cyclic, it follows that $mn= \pm 1$. This case is covered in the case when $m$ is equal to $\pm n$. Therefore, we need to deal with the case when $[H,H]N \slash [H,H]$ is non-trivial. In this case, there is $y$ a non-trivial element in $[H,H] N \slash [H,H]$, so $y$ is of the form $\frac{d}{(mn)^k}$ for some $d\in \mathbb{Z}\setminus \{0\}$ and $k\in \mathbb{N}$. But since $[H,H] N \slash [H,H]$ is a $\mathbb{Z}$-submodule, then $d\in [H,H] N \slash [H,H]$. Therefore, $x^d\in [H,H] N$. Since $N$ has finite index in $[H,H] N$, we have thast $x^{q}\in N$ for some $q\in \mathbb{N}$. Therefore, in all cases $G_2\slash L_2$ is virtually abelian.

Note that the kernel of the natural epimorphism $ f_{i} \colon S \mapsto G_{i} \slash L_{i}$ is $L_{1}\times L_{2}$, so \[ G_{1} \slash L_{1} \cong S \slash (L_{1} \times L_{2}) \cong G_{2} \slash L_{2}.\]
We have just proved that $G_{i} \slash L_{i}$ is either finite, virtually $\mathbb{Z}$ or virtually $\mathbb{Z}^2$. If $G_{i} \slash L_{i}$ is finite, since $G_{i}$ is finitely generated, so is $L_{i}$. 

If $G_{i}\slash L_{i}$ is virtually $\mathbb{Z}$, there is $H$ a finite index subgroup in $S$ such that $H \slash (L_{1} \times L_{2})$ is $\mathbb{Z}$ and $(f_{1}(H) \times f_{2}(H))\slash (L_{1} \times L_{2})$ is $\mathbb{Z}^2$. Thus $H$ is normal in $f_{1}(H) \times f_{2}(H)$ and $(f_{1}(H) \times f_{2}(H)) \slash H$ is $\mathbb{Z}$. Since $f_{i}(H)$ is a finite index subgroup of $G_{i}$, it lies in $\mathcal{G}$ and it is finitely generated. In conclusion, $H$ is a finite index subgroup of $S$ and it is the kernel of a homomorphism $f_{1}(H) \times f_{2}(H) \mapsto \mathbb{Z}$, where $f_{i}(H) \in \mathcal{G}$.

Finally, we deal with the case when $G_{i} \slash L_{i}$ is virtually $\mathbb{Z}^2$. Since the group $L_i$ is a subgroup of $G_i$, it acts on the Bass-Serre tree $T_i$ and so it inherits a graph of groups decomposition. We claim that the intersection of $L_{i}$ with each edge group is trivial and so the decomposition of $L_i$ is in fact a non-trivial free product decomposition. Let us prove it for $i=2$ being the case $i=1$ analogous. If $\Gamma_{e}$ is a cyclic edge stabilizer of $T_{2}$ such that $L_{2} \cap \Gamma_{e} \neq 1$, then from \eqref{eq:decomposition} we obtain that $G_{2} \slash L_{2}$ is virtually $\mathbb{Z}$ contradicting our assumption.
\end{proof}

The following lemma is a well-known fact on Baumslag-Solitar groups but we add the proof here for completeness.

\begin{lemma}\label{Lemma BS}
Let $BS(m,n)= \langle x,t \mid t^{-1}x^m t= x^n \rangle$ such that $\gcd(m,n)=1$ and denote the normal closure of $x$ in $BS(m,n)$ by $\langle \langle x \rangle \rangle$. Then, $H_{1}(\langle \langle x \rangle \rangle; \mathbb{Z})$ is isomorphic to $\mathbb{Z}\left[\frac{1}{mn}\right]$.
\end{lemma}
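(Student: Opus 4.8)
The plan is to identify $\langle\langle x\rangle\rangle$ with the kernel of a retraction onto $\mathbb{Z}$ and then compute its abelianization as a module over the group ring of the quotient. First I would observe that the homomorphism $BS(m,n)\to\mathbb{Z}$ sending $t\mapsto 1$ and $x\mapsto 0$ has kernel exactly $N:=\langle\langle x\rangle\rangle$: killing $x$ trivialises the relation $t^{-1}x^m t=x^n$ and leaves the quotient $\langle t\rangle\cong\mathbb{Z}$. Setting $x_k:=t^{-k}xt^k$, a Reidemeister--Schreier computation with the Schreier transversal $\{t^k:k\in\mathbb{Z}\}$ (equivalently, Bass--Serre theory applied to the HNN decomposition, with $N$ acting on the Bass--Serre tree with quotient a line) shows that $N$ is the infinite amalgam
\[
N=\langle\, x_k\ (k\in\mathbb{Z}) \mid x_k^{\,n}=x_{k+1}^{\,m},\ k\in\mathbb{Z}\,\rangle,
\]
each $\langle x_k\rangle$ being infinite cyclic.

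Next I would abelianize. Writing $\bar x_k$ for the image of $x_k$ in $H_1(N;\mathbb{Z})=N^{\mathrm{ab}}$, the relations become $n\bar x_k=m\bar x_{k+1}$. Conjugation by $t$ descends to an action on $N^{\mathrm{ab}}$ sending $\bar x_k\mapsto\bar x_{k+1}$, so $N^{\mathrm{ab}}$ is a module over the group ring $\mathbb{Z}[\langle t\rangle]=\mathbb{Z}[t,t^{-1}]$, generated by the single element $\bar x_0$ with the single relation $(mt-n)\bar x_0=0$ (every relation $n\bar x_k=m\bar x_{k+1}$ is the $t^k$-translate of this one). Hence, as abelian groups,
\[
H_1(\langle\langle x\rangle\rangle;\mathbb{Z})\cong\mathbb{Z}[t,t^{-1}]/(mt-n).
\]

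It then remains to identify $\mathbb{Z}[t,t^{-1}]/(mt-n)$ with $\mathbb{Z}\!\left[\frac{1}{mn}\right]$. I would first treat the polynomial ring via the homomorphism $\mathbb{Z}[t]\to\mathbb{Q}$ determined by $t\mapsto n/m$. Its image is $\mathbb{Z}[n/m]$, which equals $\mathbb{Z}\!\left[\frac1m\right]$ because $\gcd(m,n)=1$ yields $a,b\in\mathbb{Z}$ with $an+bm=1$, whence $\frac1m=a\frac nm+b\in\mathbb{Z}[n/m]$. The kernel clearly contains $mt-n$; conversely, if $f(n/m)=0$ then $t-\frac nm$, and hence $mt-n$, divides $f$ in $\mathbb{Q}[t]$, and since $mt-n$ is primitive (its content is $\gcd(m,n)=1$) Gauss's lemma upgrades this to divisibility in $\mathbb{Z}[t]$. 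Thus the kernel is exactly $(mt-n)$ and $\mathbb{Z}[t]/(mt-n)\cong\mathbb{Z}\!\left[\frac1m\right]$ with $t\mapsto n/m$. Finally, passing from $\mathbb{Z}[t]$ to $\mathbb{Z}[t,t^{-1}]$ localises by inverting $t$, i.e.\ inverting $n/m$; as $\frac1m$ is already a unit this is the same as inverting $n$, so the Laurent quotient is $\mathbb{Z}\!\left[\frac1m\right]\!\left[\frac1n\right]=\mathbb{Z}\!\left[\frac{1}{mn}\right]$.

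The step I expect to be the main obstacle is the kernel computation for $\mathbb{Z}[t]\to\mathbb{Q}$: everything hinges on $mt-n$ being primitive so that Gauss's lemma applies, and this is precisely where the hypothesis $\gcd(m,n)=1$ is indispensable (without it the image would involve a non-reduced fraction $n/m$ and the quotient would acquire extra torsion and spurious denominators). Identifying $N$ with the stated infinite amalgam is routine but should be stated carefully, as it is exactly what licenses the passage to the one-relator module presentation of $N^{\mathrm{ab}}$.
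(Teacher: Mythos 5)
Your proof is correct, and while it begins from the same decomposition the paper uses --- the paper likewise identifies $\langle\langle x\rangle\rangle$ with the two-way infinite amalgam $\langle x_k,\ k\in\mathbb{Z}\mid x_k^{\,n}=x_{k+1}^{\,m}\rangle$, $x_k=t^{-k}xt^k$ --- the two arguments genuinely diverge after that point. The paper works entirely by hand: it defines an explicit epimorphism $f$ from $\langle\langle x\rangle\rangle$ onto $\mathbb{Z}\left[\frac{m}{n},\frac{n}{m}\right]$ sending each $x_k$ to a power of $m/n$, identifies $\mathbb{Z}\left[\frac{m}{n},\frac{n}{m}\right]$ with $\mathbb{Z}\left[\frac{1}{mn}\right]$ via the same B\'ezout manipulation you use, and then proves $\ker f=[L,L]$ directly by a two-stage induction on word length, analysing when signed sums of monomials $\pm n^{j_i}m^{l_i}$ can vanish; this induction is by far the longest and most delicate part of their proof. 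You replace it with structural algebra: you retain the conjugation action of $t$, which the paper discards, so that the abelianization becomes the one-relator $\mathbb{Z}[t,t^{-1}]$-module $\mathbb{Z}[t,t^{-1}]/(mt-n)$, and the lemma reduces to evaluating at $t=n/m$ and computing a kernel via Gauss's lemma, with $\gcd(m,n)=1$ entering exactly through primitivity of $mt-n$ and through $\mathbb{Z}[n/m]=\mathbb{Z}\left[\frac{1}{m}\right]$. Your route is shorter, its hard step is standard commutative algebra rather than an ad hoc number-theoretic induction, and it absorbs the degenerate case $|m|=1$ or $|n|=1$ uniformly (the paper splits this off and cites Collins), since you only ever use the presentation of the kernel, not properness of the amalgam; what the paper's route buys is complete elementarity --- no modules, no Gauss's lemma, only integer arithmetic --- in keeping with its stated aim of giving a self-contained proof ``for completeness.'' One incidental remark: your evaluation $t\mapsto n/m$ is the one that respects the abelianized relation $m\bar x_{k+1}=n\bar x_k$; the paper's formula $f(x_i)=(m/n)^i$ has this ratio inverted relative to its own amalgam relations, a harmless slip since the target ring is symmetric in $m$ and $n$.
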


\begin{proof}
Let us denote $\langle \langle x \rangle \rangle$ by $L$. If $m\in \{1,-1\}$ or $n\in \{1,-1\}$, $L$ is free abelian and isomorphic to $\mathbb{Z}\left[\frac{1}{n}\right]$ (see \cite{Collins}). Thus, we may assume that $m,n\in \mathbb{Z} \setminus \{0,1,-1\}$.

Let $x_{i}=t^{-i}x t^{i}$ for $i\in \mathbb{Z}$. Then, $x_{i+1}^m= x_{i}^n$ and $L$ has a decomposition as a two-way infinite amalgamated free product:
\[ \cdots \ast_{\langle x_{-1} \rangle} \langle x_{-1}, x_{0} \mid x_{-1}^n= x_{0}^m \rangle \ast_{\langle x_{0} \rangle} \langle x_{0}, x_{1} \mid x_{0}^n= x_{1}^m \rangle  \ast_{\langle x_{1} \rangle} \cdots \]
Let us define the epimorphism $f \colon L \mapsto \mathbb{Z}\left[\frac{m}{n}, \frac{n}{m}\right]$ such that $f(i)= \left(\frac{m}{n}\right)^{i}$ for all $i\in \mathbb{Z}$. That is, \[ f(x_{0})= 1, \quad f\left(x_{i}\right)= \left(\frac{m}{n}\right)^{i} \text{ if } i>0, f(x_{i})= \left(\frac{n}{m}\right)^{-i} \text{ if } i<0.\]
Let us first show that $\mathbb{Z}\left[\frac{m}{n}, \frac{n}{m}\right]$ and $\mathbb{Z}\left[\frac{1}{mn}\right]$ are isomorphic. Since the greater common divisor of $m$ and $n$ is $1$, there are $k_{1}, k_{2} \in \mathbb{Z}$ such that $1= mk_{1} + nk_{2}$. Then, \[ \frac{1}{m}= k_{1}+ \frac{n}{m}k_{2} \quad \text{and} \quad \frac{1}{n}= k_{2}+ \frac{m}{n}k_{1}.\] Therefore, $\mathbb{Z}\left[\frac{m}{n}, \frac{n}{m}\right] = \mathbb{Z}\left[\frac{1}{n},\frac{1}{m}\right]$ and this is clearly isomorphic to $\mathbb{Z}[\frac{1}{mn}]$.

Finally, we need to check that $\ker f$ coincides with the commutator subgroup $L^\prime= [L,L]$. Since $L \slash \ker f$ is abelian, $L^\prime$ is contained in $\ker f$, so we need to prove that $\ker f \subseteq L^\prime$.

Firstly, we show that if $x_{i_{1}}^{\pm 1} \dots x_{i_{k}}^{\pm 1} \in \ker f$, the number of $x_{i_{j}}$'s in the word $x_{i_{1}}^{\pm 1} \dots x_{i_{k}}^{\pm 1}$ is equal to the number of $x_{i_{j}}^{-1}$'s in $x_{i_{1}}^{\pm 1} \dots x_{i_{k}}^{\pm 1}$, for each $j\in \{1,\dots,k\}$. Let us prove it by induction on $k$. If $k$ is equal to $1$, $x_{1}^{\pm 1}$ is not an element in $\ker f$. If $k=2$, there are some options for $f(x_{i_{1}}^{\pm 1} x_{i_{2}}^{\pm 1})$:\\[5pt]
(1) $f(x_{i_{1}}^{\pm 1} x_{i_{2}}^{\pm 1})= \pm (\frac{m}{n})^{i_{1}} \pm (\frac{m}{n})^{i_{2}},$\\[3pt]
(2) $f(x_{i_{1}}^{\pm 1} x_{i_{2}}^{\pm 1})= \pm (\frac{m}{n})^{i_{1}} \pm 1,$\\[3pt]
(3) $f(x_{i_{1}}^{\pm 1} x_{i_{2}}^{\pm 1})= \pm (\frac{m}{n})^{i_{1}} \pm (\frac{n}{m})^{i_{2}},$\\[3pt]
(4) $f(x_{i_{1}}^{\pm 1} x_{i_{2}}^{\pm 1})= \pm 1 \pm 1,$\\[3pt]
(5) $f(x_{i_{1}}^{\pm 1} x_{i_{2}}^{\pm 1})= \pm (\frac{n}{m})^{i_{1}} \pm (\frac{n}{m})^{i_{2}},$\\[3pt]
(6) $f(x_{i_{1}}^{\pm 1} x_{i_{2}}^{\pm 1})= \pm (\frac{n}{m})^{i_{1}} \pm 1.$\\[5pt]

In the first case, $f(x_{i_{1}}^{\pm 1} x_{i_{2}}^{\pm 1})=0$ if and only if $i_{1}= i_{2}$ and the powers have opposite sign. The cases (4) and (5) are similar. In cases (2), (3) and (6), $f(x_{i_{1}}^{\pm 1} x_{i_{2}}^{\pm 1})$ is not $0$.

Now suppose that the statement holds for $k-1$ and let us check it for $k$. Let $x_{i_{1}}^{\pm 1} \dots x_{i_{k}}^{\pm 1} \in \ker f$ and suppose that $i_{l_{1}},\dots, i_{l_{n}}$ are the positive values among $i_{1},\dots,i_{k}$ and $i_{t_{1}},\dots, i_{t_{s}}$ are the negative ones. Then,
\[ \pm \left(\frac{m}{n}\right)^{i_{l_{1}}} \pm \dots \pm \left(\frac{m}{n}\right)^{i_{l_{n}}} \pm \left(\frac{n}{m}\right)^{-i_{t_{1}}} \pm \cdots \pm \left(\frac{n}{m}\right)^{-i_{t_{s}}} + k=0.\]
By taking $n^{i_{l_{1}}+\dots + i_{l_{n}}}m^{-(i_{t_{1}}+\dots + i_{t_{s}})}$ as the denominator, we have
\[\pm n^{j_{1}}m^{l_{1}}\pm n^{j_{2}}m^{l_{2}} \pm \dots \pm n^{j_{k}}m^{l_{k}}=0,\] for some $j_{i},l_{i}\in \mathbb{N}$, $i\in \{1,\dots,k\}$.

Suppose that $j_{1}\leq \dots \leq j_{k}$. If $j_{1}$ is equal to $j_{k}$, we obtain that $\pm m^{l_{1}} \pm \dots \pm m^{l_{k}}=0$. If $j_{1} < j_{k}$, there is $t\in \{1,\dots,k\}$ such that $j_{1}\leq j_{2} \leq \dots \leq j_{t} < j_{t+1}=\dots = j_{k}$. Then, \[ \pm n^{j_{1}}m^{l_{1}} \pm \dots \pm n^{j_{t}}m^{l_{t}}= n^{j_{k}}( \pm m^{l_{k}} \pm \dots \pm m^{l_{t+1}}).\] If $\pm m^{l_{k}} \pm \dots \pm m^{l_{t+1}}$ is different from $0$, $n^{j_{k}}$ divides $\pm n^{j_{1}}m^{l_{1}} \pm \dots \pm n^{j_{t}}m^{l_{t}}$ which is not possible. Therefore, \[\pm m^{l_{k}} \pm \dots \pm m^{l_{t+1}} =0.\]
Assume that $l_{k}\leq \dots \leq l_{t+1}$. Then, \[ m^{l_{k}}(\pm 1 \pm m^{l_{k-1}-l_{k}} \pm \dots \pm m^{l_{t+1}-l_{k}})=0.\] Thus, $l_{k-1}-l_{k}=0$ and the sign of $\pm m^{l_{k-1}-l_{k}}$ is different from the one of $\pm 1$. At this point, the induction hypothesis can be used, so the statement also holds for $k$.

Recall what we have proved: if $x_{i_{1}}^{\pm 1} \dots x_{i_{k}}^{\pm 1} \in \ker f$, the number of $x_{i_{j}}$'s in the word $x_{i_{1}}^{\pm 1} \dots x_{i_{k}}^{\pm 1}$ is equal to the number of $x_{i_{j}}^{-1}$'s in $x_{i_{1}}^{\pm 1} \dots x_{i_{k}}^{\pm 1}$, for each $j\in \{1,\dots,k\}$. The last step is to show that this implies that $x_{i_{1}}^{\pm 1} \dots x_{i_{k}}^{\pm 1}$ is an element of $L^\prime$. We prove it by induction on $k$. The cases $k\in \{1,2\}$ are routine, so assume that the statement holds for $k-1$ and let us check it for $k$. By hypothesis,
\[ x_{i_{1}}^{\pm 1} \dots x_{i_{k}}^{\pm 1}= x_{i_{1}} \dots x_{i_{j-1}}^{\pm 1} x_{i_{1}}^{-1} x_{i_{j+1}}^{\pm 1}\dots x_{i_{k}}^{\pm 1} \text{ or } x_{i_{1}}^{\pm 1} \dots x_{i_{k}}^{\pm 1}= x_{i_{1}}^{-1} \dots x_{i_{j-1}}^{\pm 1}  x_{i_{1}} x_{i_{j+1}}^{\pm 1}\dots x_{i_{k}}^{\pm 1}.\]
If we denote $x_{i_{2}}^{\pm 1} \dots x_{i_{j-1}}^{\pm 1}$ by $w$, then we have that either
\[ x_{i_{1}}^{\pm 1} \dots x_{i_{k}}^{\pm 1}= x_{i_{1}} w x_{i_{1}}^{-1} w^{-1} w x_{j+1}^{\pm 1}\dots x_{i_{k}}^{\pm 1} \hspace{0.3cm} \text{or} \hspace{0.3cm}x_{i_{1}}^{\pm 1} \dots x_{i_{k}}^{\pm 1}= x_{i_{1}}^{-1} w x_{i_{1}} w^{-1} w x_{j+1}^{\pm 1}\dots x_{i_{k}}^{\pm 1}.\]
Observe that $x_{i_{1}}w x_{i_{1}}^{-1}w^{-1}$ ( or $x_{i_{1}}^{-1}w x_{i_{1}}w^{-1}$) is an element of $L^\prime$ and by the inductive hypothesis, $w x_{j+1}^{\pm 1}\dots x_{i_{k}}^{\pm 1} \in L^\prime$. In conclusion, $x_{i_{1}}^{\pm 1} \dots x_{i_{k}}^{\pm 1} \in L^\prime$.
\end{proof}

\section{Algorithmic problems}

In this section, we study algorithmic problems for finitely presented subdirect products of (some) groups in the class $\mathcal A$. Our approach follows closely the one in \cite[Section 7.1, Section 7.2]{Bridson3}.

Let $\mathcal{A^\prime} \subset \mathcal A$ be the subclass of groups that are CAT(0) and have unique roots. Note that, conjecturally, every group in the class $\mathcal A$ has a finite index subgroup that belongs to $\mathcal{A}^\prime$.

The class $\mathcal{A^\prime}$ contains all $2$-dimensional coherent RAAGs and more generally, all graphs of groups such that the underlying graph is a tree with free abelian vertex groups and cyclic edge groups.

We next proof that the multiple conjugacy problem is decidable for the class of finitely presented subgroups of the direct product of two groups in the class $\mathcal{A^\prime}$.

The \emph{multiple conjugacy problem} for a finitely generated group $G$ asks if there is an algorithm that, given an integer $l$ and two $l$-tuples of elements of $G$, say $x=(x_{1},\dots,x_{l})$ and $y=(y_{1},\dots,y_{l})$, can determine if there exists $g\in G$ such that $g x_{i} g^{-1}= y_{i}$ in $G$, for $i\in \{1,\dots,l\}$.

The solution to the multiple conjugacy problem for finitely presented residually free groups described in \cite[Section 7.1]{Bridson3} has two steps. In the first one, the authors give sufficient conditions for a subgroup of a bicombable group to have decidable multiple conjugacy problem. More precisely, they prove the following:

\begin{proposition}{\cite[Proposition 7.1]{Bridson3}}\label{Prop:bicombable}
Let $\Gamma$ be a bicombable group, let $H < \Gamma$ be a subgroup, and suppose that there exists a subgroup $L<H$ normal in $\Gamma$ such that $\Gamma \slash L$ is nilpotent. Then $H$ has a solvable multiple conjugacy problem.
\end{proposition}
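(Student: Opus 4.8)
The plan is to reduce the multiple conjugacy problem in $H$ to a single conjugacy computation in the ambient bicombable group $\Gamma$, followed by a coset--intersection computation in the finitely generated nilpotent quotient $Q = \Gamma \slash L$. Write the two inputs as $x = (x_1, \dots, x_l)$ and $y = (y_1, \dots, y_l)$, tuples of elements of $H$ expressed as words in the generators of $\Gamma$, and let $C = C_\Gamma(x_1, \dots, x_l)$ be the centralizer in $\Gamma$ of the whole tuple. The observation driving the argument is that if some $g \in \Gamma$ satisfies $g x_i g^{-1} = y_i$ for all $i$, then the full set of such $\Gamma$-conjugators is the coset $g C$; hence $x$ and $y$ are conjugate in $H$ if and only if this coset meets $H$.

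First I would exploit the bicombing of $\Gamma$. Bicombable groups have solvable conjugacy problem, and in fact one can decide the \emph{multiple} conjugacy problem in $\Gamma$ and, when the answer is positive, compute both a conjugator $g_0 \in \Gamma$ and a finite generating set for the tuple centralizer $C$. (For biautomatic groups this is the Gersten--Short theory of effectively computable centralizers; the two-sided fellow--traveller property of the bicombing yields the analogous effective statements, the centralizer of the tuple being assembled from successive element centralizers.) If $x$ and $y$ are not conjugate in $\Gamma$, then, since $H \leq \Gamma$, they are a fortiori not conjugate in $H$, and the procedure reports this. Otherwise we obtain $g_0$ together with generators of $C$, so that the complete set of $\Gamma$-conjugators carrying $x$ to $y$ is the coset $g_0 C$.

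It remains to decide whether $g_0 C \cap H \neq \emptyset$, and this is exactly where the hypotheses $L \leq H$ and $L \lhd \Gamma$ enter. Because $L \leq H$, the subgroup $H$ is saturated with respect to $L$, i.e. it is the full preimage in $\Gamma$ of its image $\bar H \leq Q$; consequently $w \in H$ if and only if $\bar w \in \bar H$ in $Q = \Gamma \slash L$. Applying this to the elements of the coset, we find that $g_0 C \cap H \neq \emptyset$ if and only if $\bar g_0 \bar C \cap \bar H \neq \emptyset$, equivalently $\bar g_0 \in \bar H \bar C$, equivalently the coset $\bar H \bar g_0$ meets the subgroup $\bar C$. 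Since $\Gamma$ is finitely generated (a bicombing is defined over a finite generating set), $Q$ is a finitely generated nilpotent, hence polycyclic, group, and generators of the subgroups $\bar H$ and $\bar C$ are computed from those of $H$ and $C$. The coset--subgroup intersection problem in a finitely generated nilpotent group is decidable, which completes the algorithm.

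The hard part will be the first step: extracting from the bicombing not merely a decision procedure for conjugacy but effective data, namely a conjugator and a finite generating set for the centralizer of an arbitrary finite tuple. This is the genuinely geometric ingredient, and it is where the fellow--traveller property of the bicombing must be leveraged; by contrast, the purely algebraic reduction of the last paragraph is routine once these data are in hand, and the decidability of coset--subgroup intersection in finitely generated nilpotent groups is classical.
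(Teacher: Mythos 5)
This proposition is not proved in the paper you are reading: it is imported verbatim from \cite[Proposition 7.1]{Bridson3}, so your attempt has to be measured against the proof given there. Your second and third steps are correct and do match the algebraic skeleton of that argument: since $L\le H$ and $L\lhd\Gamma$, the subgroup $H$ is the full preimage of $\bar H=H/L$ under $\phi\colon\Gamma\to Q=\Gamma/L$, so the conjugator coset $g_0C$ meets $H$ if and only if $\bar g_0\in\bar H\,\bar C$ where $\bar C=\phi(C)$; and membership in a product of two subgroups of a finitely generated nilpotent (hence polycyclic) group, \emph{given finite generating sets for both subgroups}, is indeed decidable (the positive side is recursively enumerable, and non-membership is certified in a finite quotient because products of subgroups of polycyclic groups are closed in the profinite topology, by Lennox--Wilson).

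The genuine gap is your first step, and it is fatal as written. You require, for an arbitrary tuple in a bicombable group, an algorithm that outputs a finite generating set for the tuple centralizer $C$, and you justify this by asserting that the two-sided fellow-traveller property yields the analogues of the Gersten--Short theorems. It does not: computable rational centralizers are a theorem about \emph{biautomatic} groups, whose proof uses the regular language of normal forms in an essential way, and no analogue is known for bicombable groups, where it is not even known that centralizers of tuples are finitely generated. This is exactly why \cite{Bridson3} states the proposition for bicombable groups in the first place: the groups it must be applied to (direct products of limit groups, which are CAT(0) by Alibegovi\'c--Bestvina) are not known to be biautomatic, and correspondingly the proof there is organised so that $C$ is never computed. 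What the fellow-traveller property genuinely provides is a computable bound on the length of \emph{some} conjugator (also for tuples, and also for conjugators constrained to a prescribed coset of a finite-index normal subgroup), hence a decision procedure for conjugacy in $\Gamma$, a recursive enumeration of $C$, and the image of $C$ in any specified finite quotient; the argument in \cite{Bridson3} runs on this weaker fuel, using finite quotients of $Q$ to certify negative answers. With only an enumeration of $C$ --- which is all your argument actually has --- your final step collapses: the question $\bar g_0\in\bar H\bar C$ becomes recursively enumerable on the YES side but has no terminating NO procedure, since one can never certify that finitely many enumerated elements of $\bar C$ generate it (this is already impossible for subgroups of $\Z$). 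So the reduction you outline is the right one, but the ingredient you yourself flag as ``the hard part'' is not a known property of bicombable groups, and the proposal does not constitute a proof without replacing it by conjugator-length and coset-restricted arguments of the kind just described.
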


Our main result, Theorem \ref{Main theorem}, states that if $S$ is a finitely presented subgroup of the direct product of two groups in the class, then $S$ has a finite index subgroup $S_0$ which is $H$-by-(free abelian). The above result is intended to prove that $S_0$ has decidable multiple conjugacy problem. In general, the decidability of the conjugacy problem does not pass from finite index subgroups to the group. However, it does if the group has unique roots.

\begin{lemma}{\cite[Lemma 7.2]{Bridson3}} \label{lem:roots}
Suppose $G$ is a group in which roots are unique and $H<G$ is a subgroup of finite index. If the multiple conjugacy problem for $H$ is solvable, then the multiple conjugacy problem for $G$ is solvable.
\end{lemma}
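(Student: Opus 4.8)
The plan is to reduce the multiple conjugacy problem in $G$ to finitely many instances of the multiple conjugacy problem in $H$: coset representatives absorb the finite index, and the uniqueness of roots is what lets us replace elements of $G$ by powers that actually lie in $H$. First I would fix the combinatorial data. Let $k=[G:H]$ and choose a left transversal $t_1=1,t_2,\dots,t_k$ for $H$ in $G$. Set $N=k!$. The action of $G$ on the $k$ left cosets gives a homomorphism $\phi\colon G\to \mathrm{Sym}(G\slash H)$ whose kernel is the normal core $H_0=\bigcap_g gHg^{-1}\subseteq H$; since every element of $\mathrm{Sym}(G\slash H)$ has order dividing $k!$, we get $g^{N}\in \ker\phi\subseteq H$ for all $g\in G$. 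Thus raising any element of $G$ to the $N$-th power lands it in $H$.

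Next, given two $l$-tuples $x=(x_1,\dots,x_l)$ and $y=(y_1,\dots,y_l)$ of elements of $G$, any conjugator $g$ may be written uniquely as $g=t_j h$ with $h\in H$, and then $g x_i g^{-1}=y_i$ for all $i$ is equivalent to $h x_i h^{-1}=t_j^{-1}y_i t_j$ for all $i$. Hence $x$ and $y$ are conjugate in $G$ if and only if, for some $j\in\{1,\dots,k\}$, the $G$-tuple $x$ is conjugate \emph{by an element of $H$} to $y^{(j)}:=(t_j^{-1}y_1 t_j,\dots,t_j^{-1}y_l t_j)$. The main step is to replace this $H$-conjugacy of $G$-tuples by an $H$-conjugacy of $H$-tuples, namely of $x^{[N]}:=(x_1^{N},\dots,x_l^{N})$ and $(y^{(j)})^{[N]}:=((t_j^{-1}y_1 t_j)^{N},\dots,(t_j^{-1}y_l t_j)^{N})$, whose entries lie in $H$ by the first paragraph. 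Here is the point: for a \emph{fixed} $h\in H$, if $h x_i h^{-1}=t_j^{-1}y_i t_j$ then raising to the $N$-th power gives $h x_i^{N} h^{-1}=(t_j^{-1}y_i t_j)^{N}$; conversely, if $h x_i^{N} h^{-1}=(t_j^{-1}y_i t_j)^{N}$ then $(h x_i h^{-1})^{N}=(t_j^{-1}y_i t_j)^{N}$, and uniqueness of roots in $G$ forces $h x_i h^{-1}=t_j^{-1}y_i t_j$. So the two conditions are equivalent entrywise for each fixed $h$, and therefore the existence of a single $h\in H$ conjugating $x$ to $y^{(j)}$ is equivalent to the existence of a single $h\in H$ conjugating $x^{[N]}$ to $(y^{(j)})^{[N]}$.

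The algorithm then runs as follows: for each $j\in\{1,\dots,k\}$, rewrite the entries $x_i^{N}$ and $(t_j^{-1}y_i t_j)^{N}$ as words in a fixed generating set of $H$ (a routine rewriting, e.g.\ via Reidemeister--Schreier, which is effective because $H$ has finite index in the finitely presented group $G$), and invoke the assumed solver for the multiple conjugacy problem in $H$ on the pair $x^{[N]},(y^{(j)})^{[N]}$; declare $x,y$ conjugate in $G$ exactly when some $j$ yields a positive answer (and, if desired, output $g=t_j h$). The genuine content, and the only place the hypotheses are used, is the equivalence in the second paragraph: it is precisely unique roots that licenses passing from the $N$-th powers back to the original elements \emph{with the conjugator held fixed}, so that the ``multiple'' (single common conjugator for the whole tuple) aspect is preserved. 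The remaining ingredients---coset enumeration and rewriting of $H$-elements---are bookkeeping, so the only real obstacle is confirming that this rewriting is effective in the ambient computability framework, which holds since $[G:H]$ is finite.
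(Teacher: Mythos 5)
Your proof is correct: powering by $N=[G:H]!$ lands all entries in $H$, splitting the conjugator over a transversal reduces to finitely many $H$-instances, and unique roots is exactly what makes the powered condition equivalent to the original one for a fixed conjugator. The paper itself gives no proof of this lemma (it is quoted from \cite[Lemma 7.2]{Bridson3}), and your argument is essentially the standard one given there, so there is nothing to correct beyond the rewriting/effectiveness bookkeeping you already flagged.
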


These two results are the main tools to prove the following:

\begin{theorem}\label{thm:conjugacy}
The multiple conjugacy problem is solvable in every finitely presented subgroup $S$ of the direct product of two groups in $\mathcal{A}^\prime$ ($S$ given by a finite presentation and an embedding to $G_1\times G_2$ where $G_i\in \mathcal{A}^\prime$ and the embedding is as a neat subdirect product).
\end{theorem}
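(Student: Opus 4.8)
The plan is to reduce the statement to the two imported results of Bridson, Howie, Miller and Short, Proposition~\ref{Prop:bicombable} and Lemma~\ref{lem:roots}, by feeding them the structural description of $S$ produced by Theorem~\ref{Main theorem}.

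First I would invoke Theorem~\ref{Main theorem} to obtain a finite index subgroup $S_0 <_{fi} S$ of the form $S_0 = \ker f$, where $f \colon H_1 \times H_2 \to \mathbb{Z}^n$ with $n \in \{0,1,2\}$ and each $H_i <_{fi} G_i$ lies in $\mathcal{A}$. Since $G_i \in \mathcal{A}^\prime$ and both defining properties of $\mathcal{A}^\prime$ pass to subgroups (being $CAT(0)$ via a geometric action on the same $CAT(0)$ space, and having unique roots since the relevant equation lives in the subgroup), we get $H_i \in \mathcal{A}^\prime$. Set $\Gamma = H_1 \times H_2$ and $L = L_1 \times L_2$, where $L_i = S \cap G_i$. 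Then $L \trianglelefteq \Gamma$, because each $L_i \trianglelefteq G_i$ and $L_i < H_i$, and by construction $L < S_0 < \Gamma$. Moreover, combining Remark~\ref{rem:finite index} with the main theorem, each $H_i/L_i$ is free abelian, so the quotient $\Gamma/L \cong (H_1/L_1) \times (H_2/L_2)$ is free abelian, hence nilpotent.

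Next I would observe that $\Gamma = H_1 \times H_2$ is $CAT(0)$, being a product of two $CAT(0)$ groups, and that $CAT(0)$ groups are bicombable through the geodesic bicombing of the $CAT(0)$ space on which they act geometrically; this is precisely the reason for restricting to the subclass $\mathcal{A}^\prime$. With $\Gamma$ bicombable, $H := S_0 < \Gamma$, and $L < S_0$ normal in $\Gamma$ with $\Gamma/L$ nilpotent, Proposition~\ref{Prop:bicombable} applies and yields that $S_0$ has solvable multiple conjugacy problem. Finally, $S$ inherits unique roots from $G_1 \times G_2$ (which has unique roots since each $G_i \in \mathcal{A}^\prime$ does, and unique roots are inherited by subgroups), so Lemma~\ref{lem:roots} upgrades the solvability of the multiple conjugacy problem from the finite index subgroup $S_0$ to $S$ itself.

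The main obstacle I expect is not the algebra above but its effectivity: Proposition~\ref{Prop:bicombable} and Lemma~\ref{lem:roots} deliver algorithms only once the relevant subgroups and the nilpotent quotient are presented computably, so the real work is to make every step of the reduction algorithmic from the given data — a finite presentation of $S$, the embedding $S \incl G_1 \times G_2$, and the neatness of the subdirect product. Concretely, one must effectively compute generators of $L_i = S \cap G_i$, decide which of the three cases of Theorem~\ref{Main theorem} occurs, construct $S_0$ together with a computable homomorphism $f$ realising the free abelian quotient $\Gamma/L$, and decide membership in $L$; the neatness hypothesis is exactly what guarantees that these computations terminate, mirroring the effectivity arguments in \cite[Section 7.2]{Bridson3}.
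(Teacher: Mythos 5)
Your proposal is correct and takes essentially the same route as the paper: the paper also feeds the structure coming from Theorem~\ref{Main theorem} into Proposition~\ref{Prop:bicombable} and then Lemma~\ref{lem:roots}, the only cosmetic difference being that it chooses $\Gamma$ to be a finite index subgroup $G <_{fi} G_1\times G_2$ with $G\slash (L_1\times L_2)$ abelian and applies Proposition~\ref{Prop:bicombable} to $H=G\cap S$, rather than your $\Gamma = H_1\times H_2$ and $H=\ker f$. Your closing worry about effectivity is unnecessary: Proposition~\ref{Prop:bicombable} only requires the \emph{existence} of a normal subgroup $L$ with nilpotent quotient, so neither $L_i$, nor the case of Theorem~\ref{Main theorem} that occurs, nor membership in $L$ needs to be computed, and accordingly the paper's proof does no such effectivity work.
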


\begin{proof}
Let $S< G_1\times G_2$ be a finitely presented neat subdirect product of $G_1, G_2\in \mathcal{A^\prime}$. That is, $L_i= S\cap G_i\ne \{1\}$ and the projection map $\pi_i\colon S \mapsto G_i$ is an epimorphism for $i\in \{1,2\}$. 

Now, the group $G_1\times G_2$ is CAT(0) since by assumption, both $G_1$ and $G_2$ are CAT(0). Furthermore, as shown in the proof of Theorem \ref{Main theorem}, we have that $(G_1\times G_2) \slash (L_1 \times L_2)$ is virtually abelian and so there is a finite index subgroup $G$ of $G_1\times G_2$ such that $G\slash (L_1\times L_2)$ is abelian. Furthermore, $G$ is CAT(0) for being a finite index subgroup of a CAT(0) group and so it is bicombable. Then, since $L_1\times L_2 < G\cap S < G$ and  $(G\cap S) \slash (L_1\times L_2)$ is abelian, it follows from Proposition \ref{Prop:bicombable} that $G\cap S$ has decidable multiple conjugacy problem. Now $S\cap G$ has finite index in $S$ and since $G_1 \times G_2$ has unique roots, so does $G$. Finally, from Lemma \ref{lem:roots} we conclude that $S$ has decidable multiple conjugacy problem.
\end{proof}

Notice that in Theorem \ref{thm:conjugacy} we require the finitely presented group $S$ to be given as a neat subdirect product of groups in the class $\mathcal{A^\prime}$. If we restrict to the family of $2$-dimensional coherent RAAGs, then given a finite presentation for $S$, one can effectively determine two $2$-dimensional coherent RAAGs $A_1$ and $A_2$, and an embedding $f\colon S \mapsto A_1 \times A_2$ such that $A_i \cap S \ne 1$ for $i\in \{1,2\}$ (see \cite{Ilya}, and also see \cite[Section 7.1]{Bridson3} to see how to relate the approaches). Then $\pi_i(f(S))=G_i$ is a finitely generated subgroup of $A_i$, where $\pi_i\colon A_1 \times A_2 \mapsto A_i$ is the natural projection map, $i\in \{1,2\}$, and from \cite{Kapovich} (see Lemma \ref{Lemma M} below), given a finite set of generators (which are the image of the generators of $S$), one can effectively describe the presentation of $G_i$, $i\in \{1,2\}$. Therefore, given a finite presentation of $S$, one can effectively determine $G_1 \times G_2$ so that $S$ is a neat subdirect product of $G_1\times G_2$. Furthermore, since finitely generated subgroups of coherent RAAGs are CAT(0) (see \cite[Corollary 9.5]{CDK20}) and have unique roots (see \cite{Duchamp}), we have that $G_1, G_2 \in \mathcal A^{\prime}$. Therefore, from Theorem \ref{thm:conjugacy} and the discussion above, we deduce the following:

\begin{corollary}
The multiple conjugacy problem is decidable for the class of finitely presented subgroups of the direct product of two $2$-dimensional coherent RAAGs.
\end{corollary}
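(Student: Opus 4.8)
The plan is to deduce the corollary from Theorem~\ref{thm:conjugacy} by manufacturing, out of an abstract finite presentation of $S$, exactly the extra data that theorem demands: an explicit embedding of $S$ as a neat subdirect product of $G_1\times G_2$ with $G_1,G_2\in\mathcal{A}'$. The input here is a finite presentation of a group $S$ that is promised to be a finitely presented subgroup of the direct product of two $2$-dimensional coherent RAAGs, and the desired output is a uniform algorithm solving the multiple conjugacy problem in $S$; so the whole task is to make the reduction effective.

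First I would run the algorithm of \cite{Ilya}, which from the finite presentation of $S$ produces two $2$-dimensional coherent RAAGs $A_1,A_2$ together with an embedding $f\colon S\hookrightarrow A_1\times A_2$ such that $f(S)\cap A_i\ne 1$ for $i\in\{1,2\}$. Replacing $S$ by $f(S)$, I set $G_i=\pi_i(f(S))$, the image under the projection $\pi_i\colon A_1\times A_2\mapsto A_i$. By construction $f(S)$ is a subdirect product of $G_1\times G_2$, and since any element of $f(S)$ lying in the $i$-th factor of $A_1\times A_2$ automatically has its nontrivial coordinate in $\pi_i(f(S))=G_i$, we get $f(S)\cap G_i=f(S)\cap A_i\ne 1$. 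Hence $L_i:=f(S)\cap G_i\ne 1$, so $f(S)$ is a \emph{neat} subdirect product of $G_1\times G_2$.

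Next I would confirm that $G_1,G_2\in\mathcal{A}'$ and that their presentations are computable. Each $G_i$ is the finitely generated subgroup of the coherent RAAG $A_i$ generated by the projections of the given generators of $S$; by the algorithm of \cite{Kapovich} (recorded as Lemma~\ref{Lemma M} below) one can effectively write down a finite presentation of $G_i$ from precisely these generators. Moreover, finitely generated subgroups of coherent RAAGs are CAT(0) by \cite[Corollary 9.5]{CDK20} and have unique roots by \cite{Duchamp}; thus each $G_i$ lies in $\mathcal{A}'$ with no further verification required.

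At this point $S$ is presented together with an embedding $S\hookrightarrow G_1\times G_2$ as a neat subdirect product with $G_1,G_2\in\mathcal{A}'$, which is exactly the hypothesis of Theorem~\ref{thm:conjugacy}; the corollary follows immediately. The only genuine work is in the effectivity: obtaining the embedding from \cite{Ilya} and the presentations of the projections from \cite{Kapovich}. The main obstacle I anticipate is ensuring these two constructions are algorithmic and mutually compatible — in particular that the generating set used to present $G_i$ is the projected generating set of $S$, so that the embedding, the presentation of $S$, and the presentations of the $G_i$ form a single consistent input to Theorem~\ref{thm:conjugacy}. All of the group-theoretic content is already packaged in Theorem~\ref{thm:conjugacy} and in the two structural facts about subgroups of coherent RAAGs.
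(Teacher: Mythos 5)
Your proposal is correct and follows essentially the same route as the paper: both obtain the embedding $f\colon S \hookrightarrow A_1\times A_2$ with $f(S)\cap A_i\ne 1$ from \cite{Ilya}, compute presentations of the projections $G_i=\pi_i(f(S))$ via \cite{Kapovich} (Lemma~\ref{Lemma M}), invoke \cite[Corollary 9.5]{CDK20} and \cite{Duchamp} to place $G_1,G_2$ in $\mathcal{A}^\prime$, and then apply Theorem~\ref{thm:conjugacy}. Your explicit verification that $f(S)\cap G_i=f(S)\cap A_i\ne 1$, so that the subdirect product is neat, is a small point the paper leaves implicit.
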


Let us now focus on the \emph{membership problem}. Recall that the class $\mathcal G$ is defined as the class of cyclic subgroup separable graphs of groups with free abelian vertex groups and cyclic edge groups. We aim to show the following result:

\begin{theorem}\label{Theorem Membership}
If $S$ is a finitely presented subgroup of the direct product of two groups from the class $\mathcal G$ (given by a finite presentation and an embedding to $G_1\times G_2$, $G_i\in \mathcal G$) and $H \subseteq S$ is a finitely presentable subgroup (given by a finite generating set of words in the generators of $S$), then the membership problem for $H$ is decidable, i.e., there is an algorithm which, given $g\in S$ (as a word in the generators) will determine whether or not $g\in H$.
\end{theorem}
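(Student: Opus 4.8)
The plan is to follow the template of \cite[Sections 7.1 and 7.2]{Bridson3} and reduce the membership problem to (i) membership tests in the coordinate groups $G_1,G_2\in\mathcal G$ and (ii) computations in virtually abelian quotients, where the structural input comes from Theorem~\ref{Theorem case 3}. First I would record two exact structural facts. Write $M_i:=\pi_i(H)\le G_i$ for the (effectively computable, finitely generated) projections of $H$; since $H$ projects onto each $M_i$, it is a subdirect subgroup of $M_1\times M_2$, so by Goursat's lemma $H$ is \emph{exactly} the fibre product $H=\{(m_1,m_2)\in M_1\times M_2 : \theta(m_1D_1)=m_2D_2\}$, where $D_i:=H\cap G_i=H\cap M_i\trianglelefteq M_i$ and $\theta\colon M_1/D_1\to M_2/D_2$ is an isomorphism. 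The finite presentability of $H$ is exactly what feeds into Theorem~\ref{Theorem case 3}: applied to $H\le G_1\times G_2$ it shows that the coordinating quotient $\bar Q:=M_1/D_1\cong M_2/D_2$ is virtually abelian. Virtually abelian groups have solvable word and membership problems, and $D_i$ is the kernel of the (effectively constructible) quotient $M_i\to\bar Q$; since $M_i$ inherits a solvable word problem from $G_i$ (residually finite, hence with solvable word problem, by cyclic subgroup separability), membership in $D_i\le M_i$ is decidable.

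Next I would describe the decision procedure for an input $g=(g_1,g_2)\in S$. Step one: test $g_i\in M_i$ inside $G_i$ for $i\in\{1,2\}$; if either test fails then $g\notin H$. Step two, assuming $g_1\in M_1$: enumerate words in the generators of $H$ until one is found whose first coordinate equals $g_1$ (this halts because $g_1\in M_1=\pi_1(H)$, and equality is checked with the word problem of $G_1$), and read off its second coordinate $m_2$, so that $(g_1,m_2)\in H$ and hence $\theta(g_1D_1)=m_2D_2$. Step three: since $g\in H$ iff $(1,g_2m_2^{-1})\in H$ iff $g_2m_2^{-1}\in D_2$, decide the latter using the already-established decidability of membership in $D_2\le M_2$. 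Outputting ``yes'' precisely when all these tests succeed solves the problem, and every ingredient except the step-one tests is effective by the previous paragraph.

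The hard part, and the only step not yet reduced to routine virtually abelian bookkeeping, is step one: deciding membership of a given element in a \emph{finitely generated} subgroup $M_i$ of a group $G_i\in\mathcal G$. Here the kernel trick of the first paragraph does not apply, because $M_i$ need not be normal in $G_i$. I would isolate this as a membership lemma for the class $\mathcal G$ and prove it using the standard splitting of $G_i$ as a graph of groups with free abelian vertex groups and cyclic edge groups: in the abelian vertex groups membership is linear algebra over $\mathbb Z$, the edge groups are cyclic, and cyclic subgroup separability is used to make the relevant double cosets effectively separable, so that a Stallings-type folding process over the Bass--Serre tree of $G_i$ terminates and decides membership. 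Establishing this membership lemma for $\mathcal G$, together with the effectivity of the structural data furnished by Theorem~\ref{Theorem case 3}, is where essentially all of the difficulty lies; once it is in hand, the two clean reductions above assemble into the claimed algorithm.
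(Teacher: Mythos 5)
Your skeleton (project to the factors, test $g_i\in M_i$ there, then use the structure theorem to handle the coordinate-wise fibre product) is the same as the paper's, and your Goursat reduction of $g\in H$ to deciding $g_2m_2^{-1}\in D_2$ is correct. Also, the ``hard part'' you isolate --- membership in a finitely generated subgroup of a group in $\mathcal G$, proved by foldings over the Bass--Serre tree --- is not something you need to establish: it is exactly the cited result of Kapovich--Weidmann--Myasnikov (Lemma~\ref{Lemma M}, \cite{Kapovich}), which the paper uses as a black box. The genuine gap lies elsewhere, in the step you describe as routine bookkeeping: you assert that membership in $D_2\leq M_2$ is decidable because $D_2$ is the kernel of an ``effectively constructible'' quotient $M_2\to\bar Q$ with $\bar Q$ virtually abelian. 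Theorem~\ref{Theorem case 3} is a pure existence statement: its proof produces no algorithm to compute a virtually abelian quotient, no finite set of normal generators for $D_2$ in $M_2$ (note $D_2$ need not even be finitely generated --- in the virtually $\mathbb{Z}^2$ case it is an infinite free product), and no effective description of the map $M_2\to\bar Q$. Nothing in your proposal, nor in the paper, supplies this effectivity, so your Step three does not go through as stated. A secondary issue: the blanket claim that $\bar Q=M_1/D_1\cong M_2/D_2$ is virtually abelian is false when one of the $D_i$ is trivial (then $\bar Q\cong M_i\in\mathcal G$); the paper handles that degenerate case separately, reducing it to Lemma~\ref{Lemma M} plus the word problem.

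The paper's proof shows how to avoid ever computing $\bar Q$. Since $\bar Q$ is virtually free abelian it is subgroup separable, and since $D_1\times D_2=\ker\phi\subseteq H$ (where $\phi\colon M_1\times M_2\to\bar Q$ is the quotient map), one has $g\in H$ if and only if $\phi(g)\in\phi(H)$. Hence if $g\notin H$, some finite quotient of $M_1\times M_2$ separates the image of $g$ from the image of $H$. One therefore runs two semi-decision procedures in parallel: enumerate words $w$ in the generators of $H$ and test $g^{-1}w=1$ using the word problem (this halts if $g\in H$), and enumerate the finite quotients of the finitely presented group $M_1\times M_2$ (presentations of $M_1,M_2$ come from Lemma~\ref{Lemma M}), checking in each whether the image of $g$ lies in the image of $H$ (this halts if $g\notin H$). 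This uses Theorem~\ref{Main theorem} only as the existence statement it actually is. To repair your proof you would either need to adopt this separability-plus-enumeration argument in place of your Step three, or else prove an effective version of Theorem~\ref{Theorem case 3}, which is a substantial additional undertaking.
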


In \cite[Section 7.2]{Bridson3}, the authors prove that the membership problem is decidable for finitely presented subgroups of finitely presented residually free groups. The two key ingredients used in the proof are that limit groups have decidable membership problem (in fact, in \cite{W08}, Wilton proved that limit groups are subgroup separable) and the fact that presentations can be effectively described given a set of generators: if $\Gamma$ is a limit group over a free group, then there is an algorithm that, given a finite set $X \subseteq \Gamma$, will output a finite presentation for the subgroup generated by $X$ (see \cite[Lemma 7.5]{Bridson3}).

For groups in the class $\mathcal G$, these results also hold, namely, we have the following:

\begin{lemma}{\cite[Corollary 1.3]{Kapovich}}\label{Lemma M}
Let $G\in \mathcal G$. Then $G$ has solvable uniform membership problem. Moreover, there is an algorithm which, given a finite subset $X \subseteq G$, constructs a finite presentation for the subgroup $U= \langle X \rangle <G$.
\end{lemma}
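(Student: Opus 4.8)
The plan is to prove both assertions at once by adapting the Stallings-type folding technique to fundamental groups of graphs of groups, in the spirit of the folding machinery for graphs of groups developed by Kapovich and collaborators (which is exactly what \cite{Kapovich} provides). Fix a standard splitting of $G$ as a finite graph of groups $\mathbb{A}$ with free abelian vertex groups $A_v$ and cyclic edge groups $A_e=\langle c_e\rangle$, together with the Bass--Serre tree $T$ on which $G$ acts. Given the finite set $X$, I would first rewrite each element of $X$ in Bass--Serre normal form and assemble them into a finite \emph{$\mathbb{A}$-graph} $B$: a finite graph mapping to the underlying graph of $\mathbb{A}$, whose vertices carry finitely generated subgroups of the $A_v$ and whose edges carry coset data relating the images of the edge groups, arranged so that the subgroup $\langle B\rangle$ generated by the labelled paths in $B$ equals $U=\langle X\rangle$. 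The goal is then to apply folding moves that preserve $\langle B\rangle$ until $B$ is \emph{folded}; a folded $\mathbb{A}$-graph is itself a finite graph of groups whose fundamental group is $U$, and this yields at once a finite presentation of $U$ (the ``moreover'' clause) together with a deterministic reading procedure on $B$ that tests, for a normal-form word $g$, whether $g\in U$ (the membership statement; uniformity is automatic since $B$ is built from the input $X$).

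The folding moves of the first kind are the usual Stallings identifications; the moves special to graphs of groups require, at each application, three algorithmic subroutines, all available for $\mathcal{G}$. First, $G$ has solvable word problem: the vertex groups are free abelian, hence have solvable word problem, and the standard normal-form algorithm for fundamental groups of graphs of groups then solves it for $G$. Second, the vertex subgroups recorded in $B$ are finitely generated subgroups of some $\mathbb{Z}^n$, for which membership, intersection, and coset enumeration reduce to linear algebra over $\mathbb{Z}$ and are therefore effective; this makes the vertex-group folds, and the decision of whether two edge-incidences at a vertex lie in a common coset of $\langle c_e\rangle$, decidable. Third, one needs a uniform membership test for the cyclic edge subgroups \emph{as subgroups of $G$}, not merely inside a vertex group: solvability of the word problem makes ``$g\in\langle c_e\rangle$'' semidecidable by searching for an expression, while cyclic subgroup separability makes ``$g\notin\langle c_e\rangle$'' semidecidable by enumerating finite quotients, and together these decide membership in each edge group. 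This last subroutine is precisely what detects whether a loop in $B$ based at a vertex should be folded into that vertex's group.

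The main obstacle is termination. For arbitrary graphs of groups folding is only a semidecision procedure, and since the edge groups are infinite one must rule out unbounded growth of edge-group data along loops; here the defining hypothesis of $\mathcal{G}$ is decisive, and three ingredients combine. First, since $U$ is finitely generated, Bass's result \cite[Proposition 7.9]{Bass} gives that the minimal $U$-invariant subtree of $T$ has finite quotient, so there is an a priori bound, in terms of $X$ and $\mathbb{A}$, on the number of edges of the eventual folded graph. Second, each vertex group is free abelian and hence Noetherian as a $\mathbb{Z}$-module, so the vertex-group-enlarging folds at any fixed vertex can occur only finitely often. Third, whenever a fold produces a loop whose stable letter $t$ conjugates $c^m$ into $c^n$ for an edge generator $c$, the subgroup $\langle c,t\rangle$ is (a quotient of) the Baumslag--Solitar group $BS(m,n)$; since $G$, and hence this subgroup, is cyclic subgroup separable, \cite[Corollary 7.7]{Levitt} forces $|m|=|n|$, so such a fold enlarges the edge-group image only finitely rather than generating an infinite ascending chain. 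Combining the edge bound with the Noetherian bound and this unimodularity shows that only finitely many folds are possible, so the algorithm halts; and the cyclic-subgroup subroutine certifies at each stage whether any further fold applies, so one recognizes exactly when the folded form is reached. Verifying that these three bounds genuinely interlock to control every type of fold is the delicate point of the argument.

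Finally, once $B$ is folded I would read off both conclusions. The folded $\mathbb{A}$-graph is a finite graph of groups whose vertex groups are the recorded finitely generated free abelian groups and whose edge groups are cyclic, and whose fundamental group is $U$; writing down its Bass--Serre presentation produces an explicit finite presentation of $U=\langle X\rangle$. For the membership problem, given $g\in G$ one computes a normal form and attempts to read it as a closed path in $B$ that returns to the basepoint with trivial vertex-group contribution; the folded property guarantees this reading is deterministic and succeeds if and only if $g\in U$, which solves the uniform membership problem.
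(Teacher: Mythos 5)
First, a point of orientation: the paper contains no proof of this lemma at all --- it is imported verbatim from \cite[Corollary 1.3]{Kapovich}, and the proof there is precisely the Stallings--Bestvina--Feighn--Dunwoody folding machinery for graphs of groups that you outline. So your overall architecture (encode $X$ in a finite $\mathbb{A}$-graph, fold, then read off both a finite presentation and a deterministic membership test from the folded object) is the right one, and your algorithmic subroutines are sound: the word problem in $G$, linear algebra over $\mathbb{Z}$ inside the free abelian vertex groups, and deciding membership in a cyclic edge subgroup by running a word-problem search in parallel with an enumeration of finite quotients (here cyclic subgroup separability is legitimately available, since it is part of the definition of $\mathcal{G}$).

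The genuine gap is termination, and it sits exactly where you flag it yourself. None of your three ingredients, nor their combination as stated, halts the algorithm. (a) Bass's result \cite[Proposition 7.9]{Bass} says the quotient of the \emph{minimal} $U$-invariant subtree is finite; this is a statement about the limit object, not about the intermediate $\mathbb{A}$-graphs, which during a folding sequence need not be cores of minimal subtrees and can grow before they shrink. Finiteness of the target does not bound the length of the process, and it yields no computable a priori bound ``in terms of $X$ and $\mathbb{A}$.'' (b) Noetherianity of $\mathbb{Z}^{n}$ gives the ascending chain condition for subgroups sitting inside one fixed vertex group, but folds merge vertices and transport edge-group data by conjugation around loops, so ``finitely many vertex-enlarging folds at any fixed vertex'' does not sum to a global bound; one needs a well-founded complexity for the entire folding sequence, and constructing such a complexity is exactly the technical heart of \cite{Kapovich}, absent from your sketch. (c) In the Baumslag--Solitar step, when you detect $t^{-1}c^{m}t=c^{n}$ the subgroup $\langle c,t\rangle$ is a priori only a \emph{quotient} of $BS(m,n)$, whereas \cite[Corollary 7.7]{Levitt} applies to genuine (generalized) Baumslag--Solitar groups; the paper's own use of this argument in Lemma~\ref{Lemma WPD} first verifies, via Britton's lemma in a concrete HNN situation, that $\langle t,c\rangle\cong BS(m,n)$, and you would need the analogous verification at each fold. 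Moreover, unimodularity of each single loop relation does not by itself exclude unbounded growth arising from the interaction of transmissions across several edges. It is telling that the actual proof in \cite{Kapovich} terminates with no separability hypothesis whatsoever: your reliance on cyclic subgroup separability, while permitted by the hypotheses of the lemma, indicates that the halting mechanism you propose is not the one that works, and as written the decisive step remains open.
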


The proof of Theorem~\ref{Theorem Membership} is now similar to the proof of \cite[Theorem K]{Bridson3} and we sketch it below. 

\begin{proof}[Proof of Theorem~\ref{Theorem Membership}]
Suppose that $S$ is a subgroup of $D= G_{1}\times G_{2}$, where $G_{1}$ and $G_{2}$ are in $\mathcal G$. A solution to the membership problem $H \subseteq D$ provides a solution for $H \subseteq S$. Define $L_{i}$ to be $H \cap G_{i}$ and $p_{i}$ to be the projection map $H \mapsto G_{i}$ for $i\in \{1,2\}$ and let $g= (g_{1},g_{2})\in G_{1}\times G_{2}$.

Suppose that some $L_{i}$ is trivial, say $L_{1}$. Then, $H$ is isomorphic to $p_{2}(H)$. In particular, $p_{2}(H)$ is finitely presented. By Lemma~\ref{Lemma M}, there is an algorithm that determines if $g_{2}$ lies in $p_{2}(H)$. If it does not, then $g\notin H$. If it does, we eventually find a word $w$ in the generators of $H$ so that $g
^{-1}w$ projects to $1$. Since $L_{1}= H \cap G_{1}$, we deduce that $g\in H$ if and only if $g^{-1}w=1$ and this equality can be checked because the word problem is solvable in residually finite groups and so in particular, in groups from $\mathcal G$.

It remains to consider the case when $L_{1}$ and $L_{2}$ are non-trivial. By Lemma~\ref{Lemma M}, we can determine algorithmically if $g_{i}\in H_{i}= p_{i}(H_{i})$ for $i\in \{1,2\}$. If $g_{i}\notin H_{i}$ for some $i\in \{1,2\}$, then $g\notin H$. Otherwise, we replace $G_{1}\times G_{2}$ by $H_{1}\times H_{2}$. We are now reduced to the case when $H$ is a full subdirect product. By Theorem~\ref{Main theorem}, $Q= D \slash L$ is virtually free abelian, where $L= L_{1}\times L_{2}$.

Let $\phi \colon D \mapsto Q$ be the quotient map. Virtually free abelian groups are subgroup separable, so if $\phi(g) \notin \phi(H)$, there is a finite quotient of $G$ that separates $g$ from $H$. But since $L= \ker \phi \subseteq H$, $\phi(g) \in \phi(H)$ if and only if $g\in H$. Thus an enumeration of the finite quotients of $D$ provides a procedure for proving that $g\notin H$ in this case. This terminates if $g\notin H$. We run this procedure in parallel with an enumeration of $g
^{-1}w$ that will terminate if $g\in H$.
\end{proof}

The algorithm described to solve the membership problem is not uniform (it depends on the subgroup). In \cite{BW08}, Bridson and Wilton showed that finitely presented residually free groups are actually subgroup separable providing a uniform algorithm to solve the membership problem. The RAAG $P_4$ is not subgroup separable (see \cite{NW00}), and so in general, coherent RAAGs are not subgroup separable. Therefore, one can not take this approach to obtain a uniform algorithm to solve the membership problem.

As we discussed above, since given a finitely presented subgroup of the direct product of two $2$-dimensional coherent RAAGs, one can effectively describe the RAAGs, we obtain the following:

\begin{corollary}
If $S$ is a finitely presented subgroup of the direct product of two $2$-dimensional coherent RAAGs (given by a finite presentation) and $H \subseteq S$ is a finitely presentable subgroup (given by a finite generating set of words in the generators of $S$), then the membership problem for $H$ is decidable, i.e., there is an algorithm which, given $g\in S$ (as a word in the generators) will determine whether or not $g\in H$.
\end{corollary}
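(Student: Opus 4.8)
The plan is to deduce this corollary directly from Theorem~\ref{Theorem Membership}, whose hypotheses ask that $S$ be presented as a subgroup of a direct product $G_1 \times G_2$ with $G_1, G_2 \in \mathcal G$, together with an explicit embedding and with $H$ given by words in the generators of $S$. Since the membership algorithm itself is already supplied by that theorem, the only remaining work is algorithmic input-preparation: starting from a bare finite presentation of $S$, I must effectively manufacture such an embedding whose target factors lie in $\mathcal G$, and then invoke Theorem~\ref{Theorem Membership}.

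First I would record the inclusion of $2$-dimensional coherent RAAGs, and of all their finitely generated subgroups, into the class $\mathcal G$. This was established earlier in the paper: every $2$-dimensional coherent RAAG embeds in $P_4$ (following \cite{Kim}), the group $P_4$ lies in $\mathcal G$ via its standard splitting as $\langle a,b\rangle \ast_{\langle b\rangle}\langle b,c\rangle \ast_{\langle c\rangle}\langle c,d\rangle$, and $\mathcal G$ is closed under passage to finitely generated subgroups. Hence any finitely generated subgroup of a $2$-dimensional coherent RAAG is again a group in $\mathcal G$, which is precisely the type of factor Theorem~\ref{Theorem Membership} requires.

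Next I would invoke the effective reconstruction of the ambient structure. Given a finite presentation of $S$ and the hypothesis that $S$ is a finitely presented subgroup of a product of two $2$-dimensional coherent RAAGs, the results of \cite{Ilya} allow one to compute two such RAAGs $A_1, A_2$ and an embedding $f\colon S \hookrightarrow A_1 \times A_2$ with $f(S)\cap A_i \neq 1$ for $i\in\{1,2\}$. Composing with the coordinate projections $\pi_i\colon A_1\times A_2 \to A_i$ yields finitely generated subgroups $G_i := \pi_i(f(S)) < A_i$, which by the previous paragraph lie in $\mathcal G$, and $f(S)$ is a subdirect product of $G_1\times G_2$ since $\pi_i(f(S))=G_i$. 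By Lemma~\ref{Lemma M} (from \cite{Kapovich}) one can effectively produce a finite presentation of each $G_i$ from the images of the generators of $S$. This furnishes exactly the input data demanded by Theorem~\ref{Theorem Membership}; applying that theorem to $f(S)<G_1\times G_2$, with $H$ transported along $f$, solves the membership problem and completes the proof.

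The step I expect to carry the real weight is the effective reconstruction of \cite{Ilya}: passing from an abstract finite presentation of $S$ to a concrete embedding into a product of two explicitly computable RAAGs, with the coordinate projections landing in $\mathcal G$. Everything else is bookkeeping---closure of $\mathcal G$ under finitely generated subgroups, and the effective presentation of those subgroups via Lemma~\ref{Lemma M}---so the genuine obstacle is that this reconstruction be simultaneously correct and algorithmic. That is why the argument must lean essentially on the cited result rather than on a self-contained construction, and why the non-uniformity already noted for the abstract class-$\mathcal G$ setting is the only subtlety that survives into the RAAG case.
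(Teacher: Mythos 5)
Your proposal is correct and follows exactly the paper's route: use \cite{Ilya} to effectively recover an embedding $f\colon S \hookrightarrow A_1\times A_2$ into two $2$-dimensional coherent RAAGs, project to get $G_i=\pi_i(f(S))$, note that these lie in $\mathcal G$ (via the embedding into $P_4$ and closure of $\mathcal G$ under finitely generated subgroups), compute their presentations with Lemma~\ref{Lemma M}, and then invoke Theorem~\ref{Theorem Membership}. This matches the paper's own deduction of the corollary, including your closing observation that the only surviving subtlety is the non-uniformity of the membership algorithm.
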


\end{document}